  \theoremstyle{plain}
    \newtheorem{thm}{Theorem}[section]
    \newtheorem{prop}[thm]{Proposition}
    \newtheorem{corollary}[thm]{Corollary}
    \newtheorem{subsec}[thm]{}
\theoremstyle{definition}
    \newtheorem{defn}[thm]{Definition}
    \newtheorem{exam}[thm]{Example}
\theoremstyle{remark}
     \newtheorem{remark}[thm]{Remark}
\newcommand{\largewedge}{\mbox{\Large $\wedge$}}
\title{}
\author{}
\date{}
\begin{document}
\title{Poisson-Nijenhuis groupoids}

\author{Apurba Das}
\email{apurbadas348@gmail.com}
\address{Stat-Math Unit,
Indian Statistical Institute, Kolkata 700108,
West Bengal, India.}

\subjclass[2010]{17B63, 53C15, 53D17.}
\keywords{Nijenhuis tensors, Poisson-Nijenhuis manifolds, Poisson groupoids, Lie bialgebroids.}

\begin{abstract}
We define multiplicative Poisson-Nijenhuis structures on a Lie groupoid which
extends the notion of symplectic-Nijenhuis groupoid introduced by Sti$\acute{\text{e}}$non and Xu.
We also introduce a special class of Lie bialgebroid structure on a Lie algebroid $A$, called P-N Lie bialgebroid, which defines a hierarchy of compatible Lie 
bialgebroid structures on $A$. 
%Next we show that the infinitesimal form of Poisson-Nijenhuis groupoids
% are infinitesimal multiplicative PN-Lie bialgebroids.
We show that under some topological assumption
on the groupoid,
there is a one-to-one correspondence between multiplicative Poisson-Nijenhuis structures
on a Lie groupoid and P-N Lie bialgebroid structures on the corresponding Lie algebroid.\\
\end{abstract}

\noindent
\thispagestyle{empty}

\maketitle

\tableofcontents

\section{Introduction}
Poisson geometry originated in the last century in the hamiltonian
formulation of classical mechanics and became formalized in the language of modern differential geometry by introducing Poisson manifolds and Poisson brackets.
A Poisson manifold is a 
manifold equipped with a bivector field whose Schouten bracket with itself vanishes. 
Among the various examples of Poisson manifolds, Poisson Lie groups \cite{lu-wein} are of particular interest. The more general notion of Poisson groupoid was introduced by 
Weinstein \cite{wein} as a unification of both Poisson Lie group and symplectic groupoid \cite{wein4}. 
In order to find out the infinitesimal ingredient of a Poisson groupoid, one observes that the dual bundle $A^*$ of its Lie algebroid $A$
also carries a Lie algebroid structure. Moreover, these two Lie
algebroid structures on duality satisfy the criteria of a Lie bialgebroid in the sense of
Mackenzie and Xu \cite{mac-xu}. In \cite{mac-xu2}, the same authors showed that 
under certain topological assumption, any Lie bialgebroid also integrates to a Poisson groupoid.

\medskip

The notion of Poisson-Nijenhuis structure has been studied by Magri and 
Morosi \cite{mag-mor} in the theory of completely integrable Hamiltonian systems.
A Poisson-Nijenhuis manifold is a manifold equipped with a Poisson bivector field 
and a Nijenhuis tensor which are compatible in a certain way that it induces a hierarchy of compatible Poisson
structures \cite{yks-mag, vais2}. A symplectic-Nijenhuis manifold is a Poisson-Nijenhuis manifold with non-degenerate Poisson structure. A Poisson-Nijenhuis manifold is said to be integrable if its Poisson structure is integrable. 
In \cite{stienon-xu}, Sti\'enon and Xu
introduced a notion of symplectic-Nijenhuis groupoid as a global object corresponding to an integrable Poisson-Nijenhuis manifold.
A symplectic-Nijenhuis groupoid is a Lie groupoid equipped with a symplectic-Nijenhuis structure such that the symplectic form 
and the Nijenhuis tensor are both multiplicative.

\medskip

The aim of this paper is to study 
 multiplicative Poisson-Nijenhuis structures on a Lie groupoid,
or a Poisson-Nijenhuis groupoid. This would generalize  both Poisson groupoid and 
symplectic-Nijenhuis groupoid of Sti\'enon and Xu \cite{stienon-xu}. A Poisson-Nijenhuis groupoid
is a Lie groupoid with a Poisson-Nijenhuis structure such that the Poisson bivector field and the Nijenhuis tensor are both multiplicative. 
Any Poisson groupoid with a multiplicative complementary $2$-form, pair groupoid of Poisson-Nijenhuis manifolds, 
holomorphic Poisson groupoids \cite{jotz-stienon-xu} are also examples of Poisson-Nijenhuis groupoids (cf. Example \ref{pn-grpd-exam}).
We show that the base of a Poisson-Nijenhuis groupoid carries a Poisson-Nijenhuis structure such that the source map is a Poisson-Nijenhuis map (cf. Proposition \ref{pn-grpd-base-pn}).

\medskip

We also introduce a notion of P-N Lie bialgebroid structure on a Lie algebroid $A$. 
More precisely, a P-N Lie bialgebroid structure on a Lie algebroid $A$ consists of a Lie bialgebroid $(A, A^*)$ together with an infinitesimal
multiplicative Nijenhuis tensor $N_A : TA \rightarrow TA$ such that $(\pi_A, N_A)$ 
defines a Poisson-Nijenhuis structure on $A$, where
$\pi_A$ being the linear Poisson structure on $A$ induced from the Lie algebroid $A^*$ (cf. Definition \ref{inf-mul-pn-lie-bialgbd}). 
The Lie bialgebroid of a Poisson-Nijenhuis manifold, holomorphic Lie bialgebroids are examples of P-N Lie bialgebroids (cf. Example \ref{exam-pn-lie-bialgbd}).
We show that the base of a P-N Lie bialgebroid carries a natural Poisson-Nijenhuis structure (cf. Proposition \ref{pn-lie-bialgbd-base-pn}). 
Moreover, given a P-N Lie bialgebroid structure on $A$, there is a  
hierarchy of Lie bialgebroid 
structures on $A$ that are compatible in a certain sense (cf. Proposition \ref{pn-lie-bialgbd-comp-bialgbd}).

\medskip

Next, we show that the infinitesimal form of Poisson-Nijenhuis groupoids are P-N Lie bialgebroids (cf. Proposition \ref{pn-grpd-pn-lie-bialgbd}).
More generally, given a source-connected and source-simply connected
Lie groupoid $G \rightrightarrows M$ with Lie algebroid $A$, there is a one-to-one correspondence between multiplicative
Poisson-Nijenhuis structures on $G$ and P-N Lie bialgebroid
structures on $A$ (cf. Theorem \ref{1-1,pn-grpd-pn-lie-bialgbd}).

\medskip

We also introduce an important class of subgroupoids of a Poisson-Nijenhuis groupoid, called coisotropic-invariant subgroupoids and these subgroupoids corresponds to, so called
coisotropic-invariant subalgebroids of the corresponding P-N Lie bialgebroid. Finally, we introduce P-N action of a Poisson-Nijenhuis groupoid on a
Poisson-Nijenhuis manifold generalizing Poisson action of Liu, Weinstein and Xu \cite{liu-wein-xu}. 
It turns out that the moment map of a P-N action is a Poisson-Nijenhuis map.

\medskip

On the way to our result, we study Nijenhuis tensors on a manifold (or on a Lie groupoid) and compatible Lie algebroid structures on a vector bundle.
We derive several useful results which are used in the main contents of the paper.

\medskip

\noindent {\bf Organization.} The paper is organized as follows. In section \ref{sec2}, we recall basic definitions and some known results. Section \ref{sec3} concerns
about Nijenhuis tensors on a manifold/Lie groupoid, while in Section \ref{sec4}, we study compatible Lie algebroid structures on a vector bundle. Sections \ref{sec5}, \ref{sec6} and \ref{sec7} are devoted to the study of
Poisson-Nijenhuis groupoids, P-N Lie bialgebroids and their infinitesimal correspondences. In section \ref{sec8}, we deal with coisotropic-invariant subgroupoids of a
Poisson-Nijenhuis groupoid and their infinitesimal. Finally, in section \ref{sec9}, we define P-N action.

\medskip

\noindent {\bf Notation.}
Given a Lie groupoid $G \rightrightarrows M$, by $s, t : G \rightarrow M$, we denote the source map and the target map.
Two elements $g, h \in G$ are composable if $t(g) = s (h).$ By $i : G \rightarrow G , ~ g \mapsto g^{-1}$, we denote the inversion and $\epsilon : M \rightarrow G, ~ x \mapsto \epsilon(x)$
the unit map.
The orbit set of a Lie groupoid $G \rightrightarrows M$ is the quotient $M / \sim$ determined by the equivalence relation `$\sim$' on $M$ : two points $x, y \in M$ are $\sim$ equivalent, if there
exists an element $g \in G$ such that $s(g) = x,~ t(g) = y$.

For any Lie groupoid $G \rightrightarrows M$, the tangent bundle  $TG$ carries a Lie groupoid structure over $TM$ whose structure maps are the tangent prolongation of the structure
maps of $G$. This Lie groupoid is called the tangent Lie groupoid of $G \rightrightarrows M$. Similarly, the cotangent bundle $T^*G$ has a Lie groupoid
structure but less obviously \cite{mac-xu2}. The base of this groupoid is $A^*$, where $A$ is the Lie algebroid of the grouopid. The unit map of this groupoid identifies
$A^*$ with the conormal bundle $(TM)^0 \hookrightarrow T^*G$ of the submanifold $M$ of $G$.

Given a Lie algebroid $A \rightarrow M$, the bundle $TA \rightarrow TM$ carries a natural Lie algebroid structure, called the tangent Lie algebroid of $A$.
Moreover, the cotangent bundle $T^*A$ also carries a Lie algebroid structure over $A^*$ (see \cite{mac-xu} for more details).

\section{Preliminaries}\label{sec2}
Let $M$ be a smooth manifold and $N : TM \rightarrow TM$ be a vector valued $1$-form, or a $(1,1)$-tensor on $M$. Then its Nijenhuis torsion $\tau_N$ is a vector valued $2$-form defined by
$$\tau_N(X, Y) := [NX, NY] - N ( [NX, Y] + [X, NY] - N[X, Y]),~~ \text{ for }X, Y \in \Gamma(TM).$$
\begin{defn}
An $(1,1)$-tensor $N$ is called a {\sf Nijenhuis tensor} if its Nijenhuis torsion vanishes.
\end{defn}
Given a Nijenhuis tensor $N$, one can define a new Lie algebroid structure on $TM$ deformed by $N$. We denote this Lie algebroid by $(TM)_N$. The bracket $[~,~]_N$ and anchor $id_N$ of this deformed Lie algebroid
are given by
$$ [X, Y]_N = [NX, Y] + [X, NY] - N[X,Y] \text{~ and ~} id_N = N , ~~ \text {for all } X, Y \in \Gamma(TM).$$
Moreover, if $N$ is a Nijenhuis tensor on $M$, then for any $k \geq 1$, $N^k$ is a Nijenhuis tensor on $M$. Thus, we have a hierarchy of deformed Lie algebroid
structures $([~,~]_{N^k}, id_{N^k} = N^k)$ on $TM$ \cite{yks-mag}.

\begin{defn}
 Let $M$ be a smooth manifold. An $(1,1)$-tensor $N : TM \rightarrow TM$ is said to be tangent to a submanifold $S \hookrightarrow M$, or,
$S$ is called an {\sf invariant submanifold} of $M$ with respect to $N$ if $N (TS) \subseteq TS$. In that case, $N$
restricts to an $(1,1)$-tensor $N_S : TS \rightarrow TS$ on the submanifold $S$.
\end{defn}
If $N$ is a Nijenhuis tensor on $M$ tangent to $S \hookrightarrow M$, then $N_S$ is a Nijenhuis tensor on $S$ and the deformed Lie algebroid $(TS)_{N_S} \rightarrow S$ is a Lie
subalgebroid of $(TM)_N \rightarrow M$.

\medskip

Let $M$ be a smooth manifold and $\pi \in \Gamma{(\largewedge^2 TM)}$ a bivector field on $M$. Then one can define a skew-symmetric bracket $[~,~]_\pi$ on the
space $\Omega^1(M)$ of $1$-forms on $M$, given by
$$[\alpha, \beta]_\pi := \mathcal{L}_{\pi^\sharp \alpha} \beta - \mathcal{L}_{\pi^\sharp \beta} \alpha - d(\pi(\alpha, \beta)), ~~ \text{ for all } \alpha, \beta \in \Gamma(T^*M),$$
where $\pi^\sharp : T^*M \rightarrow TM,~ \alpha \mapsto \pi(\alpha, -)$ is the bundle map induced by $\pi$.
If $\pi$ is a Poisson bivector (that is, $[\pi, \pi] = 0$), then the cotangent bundle $T^*M$ with the above bracket and the bundle map $\pi^\sharp$
forms a Lie algebroid \cite{mac-xu}. We call this Lie algebroid as the cotangent Lie algebroid of the Poisson manifold $(M, \pi)$ and is denoted by
$(T^*M)_\pi.$

\begin{defn}(Poisson-Nijenhuis manifolds)
 A {\sf Poisson-Nijenhuis manifold} is a manifold $M$ together with a Poisson bivector $\pi \in \Gamma{(\largewedge^2 TM)}$ and a Nijenhuis tensor $N$ such that they
 are compatible in the follwing senses:
\begin{itemize}
 \item[(i)] $ N \circ \pi^\sharp = \pi^\sharp \circ N^*$ \quad (thus, $N \circ \pi^{\sharp}$ defines a bivector field $N \pi$ on $M$),
 \item[(ii)] $C(\pi, N) \equiv 0 ,$
\end{itemize}
where
$$C(\pi, N) (\alpha, \beta) := [\alpha, \beta]_{N \pi} - ([N^*\alpha, \beta]_{\pi} + [\alpha, N^*\beta]_{\pi} - N^* [\alpha, \beta]_{\pi}), \text{~ for ~} \alpha, \beta \in \Omega^1(M).$$
The skew-symmetric $C^\infty(M)$-bilinear operation $C(\pi, N)(-,-)$ on the space of $1$-forms is called
the Magri-Morosi concominant of the Poisson structure $\pi$ and the Nijenhuis tensor $N$. 
\end{defn}
 
A Poisson-Nijenhuis manifold as above is denoted by the triple $(M, \pi, N)$. If $\pi$ is non-degenerate, that is, defines a symplectic
structure $\omega$ on $M$, then $(M, \omega, N)$ is said to be a symplectic-Nijenhuis manifold.

\begin{exam}\label{pn-manifold-exam}
(i) Any Poisson manifold $(M, \pi)$ is a Poisson-Nijenhuis manifold $(M, \pi, N = id)$.

\medskip

(ii) (complemented Poisson manifolds) 

Let $(M, \pi)$ be a Poisson manifold with the cotangent Lie algebroid $(T^*M)_\pi = (T^*M, [~,~]_{\pi} , \pi^\sharp)$.
We denote the extended Gerstenhaber bracket on $\Omega^\bullet (M) = \Gamma (\largewedge^\bullet T^*M)$ by the same symbol $[~,~]_\pi$.
A $2$-form $\omega \in \Omega^2(M)$ is called a complementary $2$-form if
$$ [\omega, \omega]_{\pi} = 0 .$$
A Poisson manifold together with a complementary $2$-form is called a {\sf complemented Poisson manifold}. Given a complemented Poisson manifold
$(M, \pi, \omega)$, if $\omega$ satisfies
$$ \iota_{\pi^\sharp \alpha} d\omega = 0, ~~ \text{ for all } \alpha \in \Omega^1(M),$$
(in particaular, if $d\omega = 0$) then $(\pi, N = \pi^\sharp \circ \omega^\sharp)$ defines a Poisson-Nijenhuis structure on $M$ \cite{vais2}.
Here $\omega^\sharp : TM \rightarrow T^*M,~ X \mapsto \iota_X \omega$ denotes the bundle map induced by $\omega$.

\medskip

(iii) (holomorphic Poisson manifolds) 

A {\sf holomorphic Poisson manifold} is a complex manifold $M$ whose sheaf of holomorphic functions $\mathcal{O}_M$
is a sheaf of Poisson algebras \cite{geng-stie-xu1}. Given a complex manifold $M$, there is a one-to-one correspondence between holomorphic Poisson structures on $M$ and holomorphic
bivector fields $\pi \in \Gamma(\largewedge^2T^{1,0}M)$ satisfying $[\pi, \pi] = 0.$ One can write any holomorphic bivector field $\pi$ as $\pi = \pi_R + i \pi_I$, where $\pi_R, \pi_I \in \Gamma(\largewedge^2TM)$
are smooth bivector fields on the underlying real manifold $M$ (by forgetting the complex structure $J : TM \rightarrow TM$). 
A holomorphic bivector field $\pi = \pi_R + i \pi_I$ defines a holomorphic Poisson structure on $M$
if and only if $(\pi_I, J)$ defines a Poisson-Nijenhuis structure on $M$ and $\pi_R^\sharp = J \circ \pi_I^\sharp$ \cite{geng-stie-xu1}.
\end{exam}

Let $(M, \pi, N)$ be a Poisson-Nijenhuis manifold. Then it is clear that for each $k \geq 0$, the maps
$$ N^k \circ \pi^\sharp : T^*M \rightarrow TM $$
are skew-symmetric and therefore define a hierarchy of bivector fields $\pi_k := N^k\pi$ by $\pi_k^\sharp = N^k \circ \pi^\sharp$. Moreover, it turns out that these bivectors $\pi_k$ are Poisson
bivectors and they are compatible \cite{yks-mag}. That is,
$$ [ \pi_k , \pi_k] = 0 ~~ \text{~~ and ~~} ~~ [\pi_k, \pi_l] = 0 , ~~ \text{for all } k, l \geq 0 .$$

\begin{defn}
 Let $(M, \pi, N)$ and $(\widetilde{M}, \widetilde{\pi}, \widetilde{N})$ be two Poisson-Nijenhuis manifolds. A 
smooth map $f : M \rightarrow \widetilde{M}$ is called a {\sf Poisson-Nijenhuis map} (or P-N map in short) if $f$ is
a Poisson map, that is, $f_* \pi = \widetilde{\pi}$ and commute with the Nijenhuis tensors, that is, 
$\widetilde{N} \circ f_* = f_* \circ N$.
\end{defn}

If $f$ is a P-N map, then $f$ preserves the corresponding hierarchy of Poisson structures. That is, if $\pi_k = N^k \pi$
and $\widetilde{\pi}_k = \widetilde{N}^k \widetilde{\pi}$ denote the corresponding hierarchy of Poisson bivectors, then
$f_* \pi_k = \widetilde{\pi}_k$, for all $k \geq 0$. An anti-P-N map is an anti-Poisson map that commute with the Nijenhuis tensors.

\begin{defn}\label{lie bialgebroid} \cite{mac-xu}
 A {\sf Lie bialgebroid} over $M$ is a pair $(A, A^*)$ of Lie algebroids over $M$ in duality, where the differential $d_{A^*}$ on $\Gamma (\largewedge^{\bullet} A)$ defined by the Lie algebroid structure on $A^*$ and the Gerstenhaber bracket on 
$\Gamma (\largewedge^{\bullet} A)$ defined by the Lie algebroid structure on $A$ satisfies
$$d_{A^*} [X, Y] = [d_{A^*} X , Y] + [X, d_{A^*}Y] ,~~ \text{ for all } X, Y \in \Gamma A.$$
\end{defn}

\begin{remark}\label{lie-bialgebroid-diff}
A Lie bialgebroid may also be defined as a pair $(A, \delta)$ of a Lie algebroid $A$ together with a degree one differential $\delta : \Gamma{(\largewedge^\bullet A)} \rightarrow \Gamma{(\largewedge^{\bullet +1} A )} $
on the corresponding Gerstenhaber algebra $(\Gamma{(\largewedge^\bullet A)} , \wedge, [~,~])$. Here $\delta$ is the differential $d_{A^*}$ of the Lie algebroid $A^*$.
\end{remark}

It is known that for any vector bundle $A \rightarrow M$, there is a canonical isomorphism $R : T^*A^* \rightarrow T^*A$ defined as follows \cite{mac-xu}. Note that, elements of $T^*A$
can be represented as $(w , X, \phi)$, where $w \in T^*_xM$, $X \in A_x$, $\phi \in A_x^*$ for some $x \in M$. In these terms, the map $R$ is defined by
 $R (w, \phi, X) = (- w, X, \phi)$. Then we have the following equivalent characterization of Lie bialgebroid (Theorem 6.2 \cite{mac-xu}).
\begin{thm}
 Let $A$ be a Lie algebroid over $M$ such that the dual bundle $A^*$ also carries a Lie algebroid structure. Then the pair $(A, A^*)$ is a Lie bialgebroid over
$M$ if and only if
\[
\xymatrixrowsep{0.4in}
\xymatrixcolsep{0.6in}
\xymatrix{
T^*A^* \ar[r]^{\pi_A^\sharp \circ R} \ar[d] & TA \ar[d] \\
A^*  \ar[r]_{\rho_*} & TM
}
\]
is a Lie algebroid morphism, where the domain $T^*A^* \rightarrow A^*$ is the cotangent Lie algebroid of $A^*$ induced from the Lie algebroid structure on $A$, the target $TA \rightarrow TM$
is the tangent Lie algebroid of $A$ and $\pi_A$ is the linear Poisson structure on $A$ induced from the Lie algebroid structure on $A^*$.
\end{thm}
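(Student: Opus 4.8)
As indicated, the final statement is Theorem~6.2 of \cite{mac-xu}; one could simply quote it, but here is the argument I would carry out. The plan is to use the infinitesimal criterion for morphisms of Lie algebroids: a vector bundle map $\Phi\colon (L_1\to P_1)\to (L_2\to P_2)$ over $f\colon P_1\to P_2$ is a Lie algebroid morphism if and only if the induced pullback on Lie algebroid forms intertwines the two Lie algebroid de Rham differentials, $\Phi^*\circ d_{L_2}=d_{L_1}\circ\Phi^*$. I would apply this with $L_1=(T^*A^*)_{\pi_{A^*}}$, the cotangent Lie algebroid of the linear Poisson manifold $(A^*,\pi_{A^*})$, where $\pi_{A^*}$ is the linear Poisson structure on $A^*$ induced by the Lie algebroid $A$; its Lie algebroid differential on the multivector fields $\Gamma(\wedge^\bullet TA^*)$ on $A^*$ is the Schouten operator $[\pi_{A^*},-\,]$. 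For the target I take $L_2=TA$, the tangent prolongation Lie algebroid of $A$ over $TM$, whose Chevalley--Eilenberg complex is the tangent prolongation of the Lie algebroid complex $(\Gamma(\wedge^\bullet A^*),d_A)$; the map $\Phi=\pi_A^\sharp\circ R$ covers $\rho_*\colon A^*\to TM$, the anchor of $A^*$.

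The second ingredient is homogeneity. The bivectors $\pi_A$ and $\pi_{A^*}$ are fibrewise linear, the canonical isomorphism $R$ is a morphism of double vector bundles, and $\pi_A^\sharp$ is a double vector bundle morphism covering $\mathrm{id}_A$ on one side and $\rho_*$ on the other; hence $\Phi=\pi_A^\sharp\circ R$ is a double vector bundle morphism, and all the objects entering $\Phi^*\circ d_{L_2}=d_{L_1}\circ\Phi^*$ are homogeneous of definite weight with respect to the Euler vector fields of $A\to M$ and $A^*\to M$. Consequently it will suffice to verify this identity on a generating set of Lie algebroid forms of low weight: on functions pulled back from $M$, on fibrewise-linear functions (that is, on sections of $A$ and of $A^*$ regarded as functions on the respective total spaces), and on the basic and linear $1$-forms built from these.

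The last step is the computation. Expanding $d_{L_1}=[\pi_{A^*},-\,]$, the tangent-lifted $d_A$ on the $TA$ side, and the explicit rule $R(w,\phi,X)=(-w,X,\phi)$, the morphism identity on the basic generators will hold automatically, while on the linear generators it should reduce --- after re-expressing $\pi_A^\sharp$ and $\pi_{A^*}^\sharp$ through the anchors and brackets of $A$ and $A^*$ --- to exactly the compatibility $d_{A^*}[X,Y]=[d_{A^*}X,Y]+[X,d_{A^*}Y]$ for all $X,Y\in\Gamma(A)$ of Definition~\ref{lie bialgebroid}, i.e.\ the statement that $d_{A^*}$ is a derivation of the Gerstenhaber bracket of $A$. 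Reading the same computation in reverse gives the converse. I expect the main obstacle to be organizational rather than conceptual: keeping the two projections of each of $T^*A^*$, $T^*A$ and $TA$ straight, confirming that the reduction to generators is legitimate (this is where homogeneity under the Euler vector fields is used), and matching the tangent-prolongation differential of $TA$, transported through $R$, with the Schouten differential $[\pi_{A^*},-\,]$ --- none of these is deep individually, but together they make the proof long enough that it is convenient to cite \cite{mac-xu} for the details.
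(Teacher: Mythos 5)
The paper does not prove this statement at all: it is recalled verbatim as Theorem 6.2 of \cite{mac-xu}, so the citation you mention at the outset is exactly what the paper does, and there is no internal proof to compare yours against. Judged on its own terms, your strategy is a sound and standard route: a vector bundle map is a Lie algebroid morphism iff its pullback intertwines the two Chevalley--Eilenberg differentials; the differential of the cotangent algebroid of the linear Poisson manifold $(A^*,\pi_{A^*})$ is the Lichnerowicz operator $[\pi_{A^*},-]$ on multivector fields on $A^*$; the target complex is the tangent prolongation of $(\Gamma(\largewedge^{\bullet}A^*),d_A)$; and bihomogeneity with respect to the two Euler vector fields is the correct device for cutting the verification down to functions pulled back from $M$, fibrewise-linear functions, and the associated one-forms. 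None of these ingredients is wrong.

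The gap is that the one step carrying the actual content of the theorem is asserted rather than performed. Everything before your final paragraph is setup that applies verbatim whether or not $(A,A^*)$ is a Lie bialgebroid; the theorem lives entirely in the claim that the morphism identity on the weight-one generators is \emph{equivalent} to $d_{A^*}[X,Y]=[d_{A^*}X,Y]+[X,d_{A^*}Y]$, and you say only that it ``should reduce'' to this. To close the argument you would need to (a) express $\pi_{A^*}$ and $\pi_A$ through the structure of $A$ and $A^*$ (e.g.\ $\pi_{A^*}(l_X,l_Y)=l_{[X,Y]}$ and $\pi_{A^*}(l_X,f\circ q_*)=(\rho(X)f)\circ q_*$ for $X,Y\in\Gamma A$, with the dual formulas on $A$), (b) transport the tangent-prolongation differential of $TA$ through $R$ and match both sides on the linear generators --- this is where the sign in $R(w,\phi,X)=(-w,X,\phi)$ and the two competing projections of $T^*A$ actually enter and where errors typically hide --- and (c) invoke the homogeneity argument explicitly to show that no conditions of higher weight arise, which is what makes the ``only if'' direction come for free from the same computation. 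None of this is conceptually deep, but it \emph{is} the proof; as written, your argument builds the correct frame and then defers the theorem itself to the computation it does not carry out, so as a standalone proof it is incomplete (though as a justification for citing \cite{mac-xu}, which is all the paper itself does, it is more than adequate).
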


If $(M, \pi, N)$ is a Poisson-Nijenhuis manifold, then $((TM)_N , (T^*M)_\pi)$ forms a Lie bialgebroid over $M$ \cite{yks2}. It follows from the duality of
a Lie bialgebroid that $((T^*M)_\pi, (TM)_N )$ is also a Lie bialgebroid.
Thus, in terms of the degree one differential, $((T^*M)_\pi, d_N)$ forms a Lie bialgebroid,
 where $d_N$ is the differential of the deformed tangent Lie algebroid $(TM)_N$. The characterization
of Lie bialgebroids arising from Poisson-Nijenhuis manifolds is given by the following \cite{stienon-xu}.

\begin{prop}\label{lie-bialgbd-pn}
 Let $(M, \pi)$ be a Poisson manifold. A Lie bialgebroid $((T^*M)_{\pi}, \delta)$ is induced by a Poisson-Nijenhuis structure if and only if
$[\delta, d] = 0$, where $d$ is the usual de Rham differential.
\end{prop}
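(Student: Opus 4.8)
The plan is to prove both implications by exploiting one structural fact. A degree-one differential $\delta$ on the Gerstenhaber algebra $\bigl(\Gamma(\largewedge^\bullet T^*M),\wedge,[~,~]_\pi\bigr)$ is, in particular, a derivation of the exterior algebra $\Omega^\bullet(M)$ of square zero, hence the de Rham differential of some Lie algebroid structure on $TM$; and the differential of a Nijenhuis-deformed Lie algebroid $(TM)_N$ is given by the classical formula $d_N=[\iota_N,d]=\iota_N d-d\,\iota_N$, where $\iota_N$ is the degree-zero derivation of $\Omega^\bullet(M)$ associated with the $(1,1)$-tensor $N$ and $d$ is the usual de Rham differential. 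The whole point, under the hypothesis $[\delta,d]=0$, is to recognize that $\delta$ must be of this deformed form for a Nijenhuis tensor $N$ that is moreover $\pi$-compatible.

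\emph{Necessity.} Suppose $(\pi,N)$ is a Poisson--Nijenhuis structure and $\delta=d_N$. Then $((TM)_N,(T^*M)_\pi)$ is a Lie bialgebroid, hence so is its dual $((T^*M)_\pi,(TM)_N)=((T^*M)_\pi,\delta)$. For the commutator: from $d_N=\iota_N d-d\,\iota_N$ and $d^2=0$ one gets $d\,d_N=d\,\iota_N d$ and $d_N\,d=-d\,\iota_N d$, so the graded commutator $[d_N,d]=d_N\,d+d\,d_N=0$. (Only $d^2=0$ is used here; the vanishing of $\tau_N$ plays no role.) Thus $[\delta,d]=0$.

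\emph{Sufficiency.} Let $((T^*M)_\pi,\delta)$ be a Lie bialgebroid with $[\delta,d]=0$. The restriction $\delta|_{C^\infty(M)}\colon C^\infty(M)\to\Omega^1(M)$ is a derivation over the identity, so $\delta f=N^*\!df$ for a unique bundle endomorphism $N^*$ of $T^*M$; set $N:=(N^*)^*$. Since also $d_N f=\iota_N\,df=N^*\!df$, the degree-one derivation $\delta-d_N$ of $\Omega^\bullet(M)$ vanishes on $C^\infty(M)$, hence is $C^\infty(M)$-linear; by the Fr\"olicher--Nijenhuis classification of degree-one derivations it equals $\iota_L$ for a unique $TM$-valued $2$-form $L$, so that $\delta=d_N+\iota_L$. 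Consequently $[\delta,d]=[d_N,d]+[\iota_L,d]=d_L$, and since $d_L f(X,Y)=df\bigl(L(X,Y)\bigr)$ for functions $f$, the hypothesis forces $L=0$, i.e. $\delta=d_N$. Finally $\delta^2=0$ yields $d_N^2=0$, which holds precisely when $\tau_N=0$; hence $N$ is a Nijenhuis tensor and the Lie algebroid on $TM$ dual to $(T^*M)_\pi$ is exactly $(TM)_N$.

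It remains to check that $(\pi,N)$ is a Poisson--Nijenhuis structure. By construction $((TM)_N,(T^*M)_\pi)$ is a Lie bialgebroid, equivalently the Lichnerowicz--Poisson differential $d_\pi=[\pi,-]$ is a derivation of the $N$-deformed Schouten bracket on $\Gamma(\largewedge^\bullet TM)$; I would unwind the identity $d_\pi[X,Y]_N=[d_\pi X,Y]_N+[X,d_\pi Y]_N$. Pairing it with exact $1$-forms (through the anchors $N$ and $\pi^\sharp$) produces $N\circ\pi^\sharp=\pi^\sharp\circ N^*$, so that $N\pi$ is a genuine bivector; its full content on $\Gamma(TM)$, after identifying the resulting expression with the Magri--Morosi concomitant, produces $C(\pi,N)\equiv 0$. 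This equivalence is precisely the result of \cite{yks2}. This last step is the one I expect to be the main obstacle: the reductions above are formal consequences of the Fr\"olicher--Nijenhuis calculus, but separating the two Poisson--Nijenhuis compatibility conditions out of the single mixed-degree bialgebroid identity requires careful bookkeeping of the deformed Gerstenhaber brackets and of the explicit form of $C(\pi,N)$. Once this is done, $(\pi,N)$ is a Poisson--Nijenhuis structure inducing $\delta=d_N$, which completes the proof.
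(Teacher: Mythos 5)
The paper does not actually prove this proposition: it is recalled verbatim from Sti\'enon--Xu \cite{stienon-xu} as a preliminary, so there is no in-paper argument to compare yours against. Judged on its own, your proof follows the standard route (which is essentially the one in \cite{stienon-xu}), and the core of it is sound: the Fr\"olicher--Nijenhuis decomposition $\delta=\mathcal{L}_N+\iota_L$ of a degree-one derivation of $\Omega^\bullet(M)$, the computation $[\mathcal{L}_N,d]=0$ and $[\iota_L,d]=\mathcal{L}_L$ forcing $L=0$, and the identification $d_N^2=\mathcal{L}_{\tau_N}$ so that $\delta^2=0$ gives $\tau_N=0$, are all correct (the last identity deserves the one-line justification $d_N^2=\tfrac12\mathcal{L}_{[N,N]_{FN}}$ with $[N,N]_{FN}=2\tau_N$). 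The necessity direction is complete. The one genuinely load-bearing step you do not carry out is the final equivalence ``$((TM)_N,(T^*M)_\pi)$ is a Lie bialgebroid $\Longleftrightarrow$ $(\pi,N)$ is Poisson--Nijenhuis'' for $\pi$ Poisson and $N$ Nijenhuis: you correctly attribute it to \cite{yks2}, but note that the present paper only quotes the forward implication of that theorem, and your sketch of how to extract $N\circ\pi^\sharp=\pi^\sharp\circ N^*$ and $C(\pi,N)\equiv 0$ from the single bialgebroid identity is a plan rather than a proof. Since that equivalence really is proved in \cite{yks2}, this is a citation rather than a gap, but if the grader expects a self-contained argument, that reduction is the piece you would still owe.
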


Poisson groupoids \cite{wein} were introduced by Weinstein as a unification of both Poisson Lie group and symplectic groupoid.
%A Poisson-Lie group is just a Poisson groupoid over a point, and a symplectic groupoid is a Poisson groupoid with non-degenerate Poisson structure.
\begin{defn}\label{pois-grpd}
 A {\sf Poisson groupoid} is a Lie groupoid $G \rightrightarrows M$ equipped with a Poisson structure $\pi$ on $G$ such that
the graph 
$$\text{Gr}(m) = \{(g,h, g h)| ~ t(g) = s(h)\}$$
of the groupoid multiplication map is a coisotropic submanifold of the product Poisson manifold $G \times G \times \overline{G}$. Here $\overline{G}$ denotes the manifold $G$ with the
opposite Poisson structure $- \pi$. A Poisson groupoid as above is denoted by $(G \rightrightarrows M, \pi)$.
\end{defn}

The above condition is equivalent to the fact that $\pi^\sharp : T^*G \rightarrow TG$ is a Lie groupoid morphism from the cotangent Lie groupoid $T^*G \rightrightarrows A^*$ to
the tangent Lie groupoid $TG \rightrightarrows TM$.

Given a Lie groupoid $G \rightrightarrows M$ with Lie algebroid $A$,
it is known that there is a canonical isomorphism $$ j_G : TA = T (Lie~ G) \rightarrow Lie (TG)$$
between corresponding double vector bundle structures on them. The map $j_G$ is the restriction of the canonical involution map $\sigma : T(TG) \rightarrow T(TG)$.
Similarly, one defines an isomorphism
$$ j_G^{'} : Lie (T^*G) \rightarrow T^* (Lie ~ G) = T^*A $$
between corresponding double vector bundle structures on them. The maps $j_G$ and $j_G^{'}$ are related by
$j_G^{'} = j_G^{*} \circ i_G$, where $i_G : Lie (T^*G) \rightarrow (Lie (TG))^*$ is the canonical double vector bundle isomorphism induced from the
canonical pairing $T^*G \oplus_G TG \rightarrow \mathbb{R}$ (see \cite{mac-xu, mac-xu2} for more details).

%It is known that the infinitesimal form of a Poisson-Lie group is a Lie bialgebra \cite{lu-wein}, \cite{yks}. 
%In order to find out
%the infinitesimal counterpart of Poisson groupoid, one first observes that the base $M$ of a Poisson groupoid $(G \rightrightarrows M, \pi_G)$ is a
%coisotropic submanifold of $G$, in particaular, the conormal bundle $(TM)^0 \cong A^*G$ inherits a Lie algebroid structure \cite{wein}. Thus for a
%Poisson groupoid $(G \rightrightarrows M, \pi_G)$, there exists a pair of Lie algebroids $(AG, A^*G)$ in duality giving rise to a Lie bialgebroid.
Let $(G \rightrightarrows M, \pi)$ be a Poisson groupoid with Lie algebroid $A$. It turns out that the dual bundle $A^*$ also carries a Lie algebroid structure
and the pair $(A, A^*)$ forms a Lie bialgebroid \cite{mac-xu}.
 Conversely, if $A$ is the Lie algebroid of a $s$-connected and $s$-simply connected Lie grouopid
$G \rightrightarrows M$, then any Lie bialgebroid $(A, A^*)$ integrates to a unique Poisson structure $\pi$ on $G$ making $(G \rightrightarrows M , \pi)$
into a Poisson groupoid with given Lie bialgebroid \cite{mac-xu2}.
If $\pi_A$ being the linear Poisson structure on $A$ induced from the Lie algebroid $A^*$, then we have
$$ \pi_A^\sharp = j_G^{-1} \circ Lie (\pi^\sharp) \circ j_G^{' -1} : T^*A \rightarrow TA.$$
The Lie algebroid differential $d_{A^*} : \Gamma (\largewedge^\bullet A) \rightarrow  \Gamma (\largewedge^{\bullet +1} A)$ of $A^*$ is also denoted by $\delta_\pi$
and is the infinitesimal form of the multiplicative (Poisson) bivector field $\pi$. The universal lifting theorem of \cite{pont-geng-xu} says that for any multiplicative $k$, $l$ vector fields
$\Pi$ and $\Pi'$ on a Lie groupoid $G \rightrightarrows M$ with Lie algebroid $A$, the corresponding infinitesimal $k, l$ derivations $\delta_\Pi$ and $\delta_{\Pi'}$ on the Gerstenhaber algebra $(\Gamma(\largewedge^\bullet A), \wedge, [~,~])$ satisfies
$$[\delta_\Pi, \delta_{\Pi'}] = \delta_{[\Pi, \Pi']},$$
where the bracket on the left hand side is the graded commutator bracket on the space of multi-derivations on $(\Gamma(\largewedge^\bullet A), \wedge, [~,~])$.

\medskip
% More generally, for any $k \geq 1$, there are Lie groupoids $\wedge^{k}TG \rightarrow \wedge^{k} TM $ with obvious structure maps.

\begin{defn} \cite{geng-stie-xu1, geng-stie-xu2, burs-drum}\label{mul-gpd}(multiplicative $(1,1)$-tensors on Lie groupoids)
 Let $G \rightrightarrows M$ be a Lie groupoid. A {\sf multiplicative $(1,1)$-tensor} $N$ on the groupoid is a pair $(N, N_M)$ of
$(1,1)$-tensors on $G$ and $M$, respectively, such that
\[
\xymatrixrowsep{0.4in}
\xymatrixcolsep{0.6in}
\xymatrix{
TG \ar[r]^{N} \ar[d] \ar@<-4pt>[d] & TG \ar[d] \ar@<-4pt>[d]\\
TM  \ar[r]_{N_M} & TM
}
\]
is a Lie groupoid morphism from the tangent Lie groupoid to itself.
\end{defn}

In this case, $N_M$ is the restriction of $N$ to the unit space $M$. Thus, $N_M$ is completely determined by $N$. Hence we may use $N$
to denote a multiplicative $(1,1)$-tensor.

\begin{remark}\label{lie-gp-multiplicative-tensor}
%\begin{itemize}
(i) It follows from Definition \ref{mul-gpd} that an $(1,1)$-tensor $N : TG \rightarrow TG$ on the groupoid is multiplicative if and only if $$N (X_g \bullet Y_h) = N(X_g) \bullet N(Y_h),$$ for all 
$X_g \in T_gG, ~ Y_h \in T_hG, ~ g, h \in G$ such that $t_* (X_g) = s_* (Y_h)$, where  $\bullet$ denotes the multiplication of the tangent Lie groupoid $TG \rightrightarrows TM$. 
Thus, it also turns out that the dual $N^* : T^*G \rightarrow T^*G$ is a Lie groupoid morphism from the cotangent Lie groupoid
$T^*G \rightrightarrows A^*$ to itself.

\medskip

 (ii) If $G$ is a Lie group (that is, $M$ is a point), then a $(1,1)$-tensor $N: TG \rightarrow TG$ is multiplicative if $N$ is a Lie group homomorphism from
$TG$ to itself, where $TG$ is equipped with the tangent Lie group structure. That is, $$N (X_g \cdot Y_h) = N(X_g) \cdot N(Y_h),$$ for all
$X_g \in T_gG ,~ Y_h \in T_hG$, $g,h \in G$. Thus, it follows that the multiplicativity of $N$ is being equivalent to  $N \circ m_* = m_* \circ (N \oplus N)$, where $m: G \times G \rightarrow G$ is the Lie group
multiplication map (see also \cite{burs-drum}).
\end{remark}

\begin{defn} \cite{geng-stie-xu1, geng-stie-xu2}(infinitesimal multiplicative $(1,1)$-tensors)
 Let $A$ be a Lie algebroid over $M$. An {\sf infinitesimal multiplicative $(1,1)$-tensor} on $A$ is a pair $(N_A, N_M)$ of $(1,1)$-tensors on $A$
and $M$, respectively, such that
\[
\xymatrixrowsep{0.4in}
\xymatrixcolsep{0.6in}
\xymatrix{
TA \ar[r]^{N_A} \ar[d] & TA \ar[d] \\
TM  \ar[r]_{N_M} & TM
}
\]
is a Lie algebroid morphism from the tangent Lie algebroid $TA \rightarrow TM$ to itself.
\end{defn}

In this case, $N_M$ is the restriction of $N_A$ to the zero section. Thus, $N_M$ is completely determined by $N_A$. Hence we may use $N_A$
to denote an infinitesimal multiplicative $(1,1)$-tensor.

\medskip

Given a multiplicative $(1,1)$-tensor $N$ on a Lie groupoid $G \rightrightarrows M$, define
$$\mathbb{T} N = \sigma^{-1} \circ T N \circ \sigma ,$$
where $\sigma : T(TG) \rightarrow T(TG)$ is the canonical involution map. Then $\mathbb{T} N $ defines  an $(1,1)$-tensor on $TG$.
Moreover, the submanifold $T_M^sG (= A) \hookrightarrow TG$ is an invariant submanifold with respect to the tensor $\mathbb{T}N$.
Therefore, the restriction $ N_A := (\mathbb{T} N)_A$ defines an $(1,1)$-tensor on $A$ (as a manifold).
It also turns out that $N_A$ is an infinitesimal multiplicative $(1,1)$-tensor on $A$. The map
$$ N \rightarrow N_A $$
satisfies the following \cite{geng-stie-xu1, geng-stie-xu2}.
\begin{thm}\label{mul-inf-mul}
 Let $G \rightrightarrows M$ be a $s$-connected and $s$-simply connected Lie groupoid with its Lie algebroid $A$. Then there is a one-to-one
correspondence between multiplicative $(1,1)$-tensors on the groupoid $G \rightrightarrows M$ and infinitesimal multiplicative $(1,1)$-tensors on $A$.
Moreover, this correspondence extends between 
multiplicative Nijenhuis tensors on the groupoid and infinitesimal multiplicative Nijenhuis tensors on the algebroid.
\end{thm}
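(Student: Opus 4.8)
The plan is to read a multiplicative $(1,1)$-tensor as an endomorphism of the tangent Lie groupoid and then run the standard differentiate/integrate correspondence for Lie groupoid morphisms, upgrading it at the end to the Nijenhuis condition. First I would observe that, by Definition \ref{mul-gpd}, a multiplicative $(1,1)$-tensor $N$ on $G$ is precisely a Lie groupoid morphism $N : (TG \rightrightarrows TM) \to (TG \rightrightarrows TM)$ over $N_M$ that is at the same time a vector bundle morphism over $id_G$. Applying the Lie functor produces a Lie algebroid morphism $Lie(N) : Lie(TG) \to Lie(TG)$ over $N_M$, and transporting it through the canonical isomorphism $j_G : TA \to Lie(TG)$ --- which is the restriction of the involution $\sigma$ --- gives $N_A := j_G^{-1} \circ Lie(N) \circ j_G$. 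This $N_A$ agrees with the concrete tensor $(\mathbb{T}N)_A$ obtained by restricting $\sigma^{-1}\circ TN\circ\sigma$ to the invariant submanifold $A \hookrightarrow TG$, as recorded just before the statement. Functoriality of $Lie$, together with the fact that $j_G$ identifies $Lie(TG)$ with the tangent Lie algebroid of $A$, shows that $N_A$ is a Lie algebroid morphism $TA \to TA$ over $N_M$; and since covering $id_G$ differentiates to covering $id_A$, the map $N_A$ is a genuine $(1,1)$-tensor on the manifold $A$. Hence $N_A$ is an infinitesimal multiplicative $(1,1)$-tensor.

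For the converse I would first check that the source hypotheses transfer to the tangent groupoid: the source fibre of $TG \rightrightarrows TM$ over a vector $v \in TM$ is an affine bundle over the corresponding source fibre of $G$, hence homotopy equivalent to it, so $TG \rightrightarrows TM$ is $s$-connected and $s$-simply connected whenever $G$ is. Given an infinitesimal multiplicative tensor $N_A$, I transport it by $j_G$ to a Lie algebroid endomorphism of $Lie(TG)$ over $N_M$ and invoke Lie's second theorem on the integration of Lie algebroid morphisms \cite{mac-xu2}: because the source groupoid $TG$ is $s$-simply connected, the morphism integrates to a \emph{unique} Lie groupoid morphism $\Phi : TG \to TG$ over $N_M$ whose Lie functor image is the given one. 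The one point requiring care is that $\Phi$ must cover $id_G$, so that it is indeed a $(1,1)$-tensor rather than a more general groupoid endomorphism. To settle this I compare the two Lie groupoid morphisms $p \circ \Phi$ and $p$ from $TG$ to $G$, where $p : TG \to G$ is the tangent projection: differentiating and transporting by $j_G$ turns these into $p_A \circ N_A$ and $p_A$ (with $p_A : TA \to A$ the projection), which coincide because $N_A$ covers $id_A$; uniqueness in Lie's second theorem then forces $p \circ \Phi = p$, i.e. $\Phi$ covers $id_G$. Uniqueness of differentiation and of integration shows the two assignments $N \mapsto N_A$ and $N_A \mapsto \Phi$ are mutually inverse, giving the desired bijection.

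For the last sentence of the statement I would promote the bijection to Nijenhuis tensors by tracking the Nijenhuis torsion. Writing $\tau_N$ as the Fr\"olicher--Nijenhuis self-bracket of $N$, it is a vector-valued $2$-form on $G$; since $N$ is multiplicative and the Fr\"olicher--Nijenhuis bracket of multiplicative tensors is again multiplicative, $\tau_N$ is a multiplicative vector-valued $2$-form. The differentiation functor is compatible with this bracket, so the infinitesimal counterpart of $\tau_N$ is exactly $\tau_{N_A}$, the Nijenhuis torsion of $N_A$. Finally, because $G$ is $s$-connected and $s$-simply connected, a multiplicative tensor is determined by its infinitesimal (the injectivity already used above, in the spirit of the universal lifting theorem of \cite{pont-geng-xu}); therefore $\tau_N = 0$ if and only if $\tau_{N_A} = 0$, so the correspondence restricts to one between multiplicative Nijenhuis tensors on $G$ and infinitesimal multiplicative Nijenhuis tensors on $A$.

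I expect the genuine difficulties to be concentrated in two places. The first is the propagation of the ``covers $id_G$'' normalization through integration, handled above by a uniqueness argument comparing $p\circ\Phi$ with $p$; without it, Lie's second theorem only returns an abstract groupoid endomorphism. The second, and the truly new input beyond the plain morphism correspondence, is establishing that the Fr\"olicher--Nijenhuis bracket is natural under differentiation and that $\tau_N$ is multiplicative, since these are exactly what let the vanishing of the torsion pass between the global and infinitesimal pictures.
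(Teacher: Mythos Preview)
The paper does not give its own proof of Theorem~\ref{mul-inf-mul}: the result is quoted from the literature, with the attribution ``\cite{geng-stie-xu1, geng-stie-xu2}'' immediately preceding the statement (and the Fr\"olicher--Nijenhuis aspect implicitly via \cite{burs-drum}). So there is no in-paper argument to compare your proposal against.

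That said, your outline is essentially the standard argument in those references. Reading a multiplicative $(1,1)$-tensor as a groupoid endomorphism of $TG$ covering $id_G$, applying the Lie functor and transporting by $j_G$, and then invoking Lie~II on the $s$-simply connected tangent groupoid is exactly how the correspondence is set up in \cite{geng-stie-xu2}; your uniqueness trick comparing $p\circ\Phi$ with $p$ to force the integrated morphism to cover $id_G$ is the right way to close that gap. The second half---that the Nijenhuis torsion is the Fr\"olicher--Nijenhuis self-bracket, that this bracket preserves multiplicativity, and that differentiation intertwines it with the infinitesimal FN bracket---is precisely the content of \cite{burs-drum}, which the paper cites alongside the definition of multiplicative $(1,1)$-tensors. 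You have correctly identified the two nontrivial inputs; both are established in the cited sources rather than in the present paper.
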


\section{(1,1)-tensors}\label{sec3}
In this section, we prove some useful results about $(1,1)$-tensors on a manifold or on a Lie groupoid.
\begin{prop}\label{image-nij-submnfld}
 Let $M_1$ and $M_2$ be two smooth manifolds with $(1,1)$-tensors $N_1 : TM_1 \rightarrow TM_1$ and $N_2 : TM_2 \rightarrow TM_2$, respectively. Let $\phi : M_1 \rightarrow M_2$ be a smooth map 
which commute with the $(1,1)$-tensors, that is, $N_2 \circ \phi_* = \phi_* \circ N_1$. If $S_1 \hookrightarrow M_1$ is an invariant submanifold with respect to $N_1$, $\phi(S_1)$ is a submanifold of $N_2$ and $T(\phi(S_1)) = \phi_* (TS_1)$, then 
 $\phi(S_1) \hookrightarrow M_2$ is an invariant submanifold with respect to $N_2$.
\end{prop}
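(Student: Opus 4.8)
The plan is to verify the defining condition $N_2\big(T(\phi(S_1))\big)\subseteq T(\phi(S_1))$ directly, by a pointwise diagram chase that uses each of the three hypotheses exactly once. Write $S_2:=\phi(S_1)$, which is a submanifold of $M_2$ by assumption; it then suffices to show $N_2(T_yS_2)\subseteq T_yS_2$ for every $y\in S_2$.

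First I would fix $y\in S_2$ and an arbitrary tangent vector $v\in T_yS_2=T_y\big(\phi(S_1)\big)$. Using the hypothesis $T(\phi(S_1))=\phi_*(TS_1)$, I can choose $x\in S_1$ with $\phi(x)=y$ and $u\in T_xS_1$ with $v=\phi_*u$. Next I would apply the intertwining relation $N_2\circ\phi_*=\phi_*\circ N_1$ to obtain $N_2v=N_2(\phi_*u)=\phi_*(N_1u)$. Since $S_1$ is invariant with respect to $N_1$, we have $N_1u\in T_xS_1$, whence $N_2v=\phi_*(N_1u)\in\phi_*(T_xS_1)\subseteq\phi_*(TS_1)=T(\phi(S_1))=T_yS_2$. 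As $y\in S_2$ and $v\in T_yS_2$ were arbitrary, this proves that $S_2=\phi(S_1)$ is an invariant submanifold of $M_2$ with respect to $N_2$. As a byproduct one records that $N_2$ then restricts to a $(1,1)$-tensor $(N_2)_{S_2}$ on $S_2$ and that the restricted map $\phi|_{S_1}\colon S_1\to S_2$ intertwines $(N_1)_{S_1}$ and $(N_2)_{S_2}$, which is the form in which the proposition is typically applied later.

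There is no hard step here: the content is entirely in the bookkeeping, and the only genuine subtlety is recognizing why all three conditions on $\phi(S_1)$ are needed. For a general smooth map the image $\phi(S_1)$ need not be an (embedded or immersed) submanifold at all, and even when it is, its tangent space can strictly contain $\phi_*(TS_1)$; the assumption $T(\phi(S_1))=\phi_*(TS_1)$ is precisely what guarantees that every $v\in T_yS_2$ can be realized as a pushforward $\phi_*u$ of a vector tangent to $S_1$, which is the one nontrivial input to the chase. Beyond this, no constant-rank or submersion hypothesis on $\phi$ itself is invoked — those properties, where they hold, are exactly what would force $T(\phi(S_1))=\phi_*(TS_1)$.
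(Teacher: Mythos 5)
Your argument is correct and is essentially the paper's own proof: the paper writes the same chain $N_2(T(\phi(S_1)))=N_2(\phi_*(TS_1))=\phi_*(N_1(TS_1))\subseteq\phi_*(TS_1)=T(\phi(S_1))$ at the level of subbundles, while you unwind it pointwise. The additional commentary on why each hypothesis is needed is accurate but does not change the substance.
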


\begin{proof}
% Since $\phi$ is a submersion, we have $T(\phi(S_1)) = \phi_* (TS_1)$. Therefore,
We have
\begin{align*}
 N_2 (T(\phi(S_1))) =~ N_2 (\phi_* (TS_1))
=&~ \phi_* (N_1 (TS_1))\\
\subseteq&~ \phi_* (TS_1) = ~ T(\phi(S_1)),
\end{align*}
which shows that $\phi(S_1) \hookrightarrow M_2$ is an invariant submanifold.
\end{proof}

\begin{prop}
 Let $(M_1, N_1)$ and $(M_2, N_2)$ be two smooth manifolds with $(1,1)$-tensors and $S_2 \hookrightarrow M_2$ is an invariant submanifold with respect to $N_2$.
If $\phi : M_1 \rightarrow M_2$ is a smooth map commute with the $(1,1)$-tensors and transverse to $S_2$, then $\phi^{-1}(S_2) \hookrightarrow M_1$ is an invariant
submanifold with respect to $N_1$.
\end{prop}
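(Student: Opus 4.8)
The plan is to use the transversality hypothesis to pin down the tangent spaces of $\phi^{-1}(S_2)$, and then to run the same elementary argument as in Proposition \ref{image-nij-submnfld}, only pulled back through $\phi$ rather than pushed forward.

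First I would invoke the preimage (regular value type) theorem for transverse maps: since $\phi \pitchfork S_2$, the set $S_1 := \phi^{-1}(S_2)$ is an embedded submanifold of $M_1$, and for every $p \in S_1$ its tangent space is
$$T_p S_1 = (\phi_{*p})^{-1}\bigl(T_{\phi(p)} S_2\bigr) \subseteq T_p M_1.$$
This is the only place where the transversality assumption enters the proof.

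Next, fix $p \in S_1$ and $v \in T_p S_1$. By the tangent space formula above, $\phi_{*p}(v) \in T_{\phi(p)} S_2$. Applying the commutation relation $N_2 \circ \phi_* = \phi_* \circ N_1$ gives $\phi_{*p}(N_1 v) = N_2(\phi_{*p} v)$, and since $S_2$ is invariant with respect to $N_2$ we have $N_2(\phi_{*p} v) \in T_{\phi(p)} S_2$. Hence $\phi_{*p}(N_1 v) \in T_{\phi(p)} S_2$, that is, $N_1 v \in (\phi_{*p})^{-1}\bigl(T_{\phi(p)} S_2\bigr) = T_p S_1$. As $p$ and $v$ were arbitrary, $N_1(T S_1) \subseteq T S_1$, which is precisely the statement that $S_1 = \phi^{-1}(S_2)$ is an invariant submanifold of $M_1$ with respect to $N_1$; in particular $N_1$ restricts to a $(1,1)$-tensor on $\phi^{-1}(S_2)$.

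I do not expect a genuine obstacle here. The one point requiring care is that transversality is used twice in spirit but in a single step: it guarantees both that $\phi^{-1}(S_2)$ is a submanifold at all and that its tangent space is the \emph{full} preimage $(\phi_*)^{-1}(TS_2)$, not merely contained in it. Without that equality one would only conclude $N_1(TS_1) \subseteq (\phi_*)^{-1}(TS_2)$, which in general is strictly larger than $TS_1$, and the invariance would fail.
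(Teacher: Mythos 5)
Your proposal is correct and follows essentially the same route as the paper's own proof: use transversality to get that $\phi^{-1}(S_2)$ is a submanifold with $T(\phi^{-1}(S_2)) = (\phi_*)^{-1}(TS_2)$, then apply the commutation relation $N_2 \circ \phi_* = \phi_* \circ N_1$ together with the invariance of $S_2$ to conclude $N_1(T\phi^{-1}(S_2)) \subseteq T\phi^{-1}(S_2)$. Your pointwise phrasing of the inclusion $N_1\bigl((\phi_*)^{-1}(TS_2)\bigr) \subseteq (\phi_*)^{-1}\bigl(N_2(TS_2)\bigr)$, and your remark that the \emph{equality} $TS_1 = (\phi_*)^{-1}(TS_2)$ (not mere containment) is what makes the argument close, are both accurate and match the paper's reasoning.
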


\begin{proof}
 Since $\phi$ is transverse to $S_2$, the inverse $\phi^{-1}(S_2)$ is a submanifold of $M_1$. Moreover, we have
$T(\phi^{-1}(S_2)) = (\phi_*)^{-1}(TS_2)$. Therefore,
\begin{align*}
 N_1 (T(\phi^{-1}(S_2))) = N_1 ( (\phi_*)^{-1}(TS_2)  ) \subseteq &~ (\phi_*)^{-1} N_2 (TS_2) \\
\subseteq&~ (\phi_*)^{-1} (TS_2) = T(\phi^{-1}(S_2)).
\end{align*}
This shows that $\phi^{-1}(S_2) \hookrightarrow M_1$ is an invariant submanifold with respect to $N_1$.
\end{proof}

\begin{prop}\label{coinduced-nij-tensor}
 Let $M_1$ and $M_2$ be two smooth manifolds and $\phi : M_1 \rightarrow M_2$ be a surjective submersion. Let $N_1 : TM_1 \rightarrow TM_1$ be an $(1,1)$-tensor on $M_1$. Then there
exists a unique $(1,1)$-tensor $N_2 : TM_2 \rightarrow TM_2$ such that $\phi$ commute with the $(1,1)$-tensors, that is, $N_2 \circ \phi_* = \phi_* \circ N_1$ if and only if the kernel pair 
$R (\phi)$ of $\phi$ defined by
 $$R (\phi) = \{ (x, y) |~ \phi(x) = \phi (y)\} \subset M_1 \times M_1$$  is an invariant submanifold with respect to $N_1 \oplus N_1$.
\end{prop}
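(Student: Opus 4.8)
The plan is to translate the condition about the kernel pair into a statement about a preimage of the diagonal, so that the already-proven results on invariant submanifolds can be applied in one direction, and a direct pointwise construction handles the other. Concretely, I would introduce the map $\psi : M_1 \times M_1 \to M_2 \times M_2$, $\psi(x,y) = (\phi(x),\phi(y))$, which is a surjective submersion since $\phi$ is, and observe that $R(\phi) = \psi^{-1}(\Delta_{M_2})$ where $\Delta_{M_2} \subset M_2 \times M_2$ is the diagonal. I would also record, using that $\psi$ is a submersion, the identification $T_{(x,y)} R(\phi) = \{(w_1,w_2) \in T_xM_1 \oplus T_yM_1 \mid \phi_* w_1 = \phi_* w_2\}$ for every $(x,y) \in R(\phi)$; this description is the workhorse of the whole argument.

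For the "only if" direction, suppose $N_2$ exists with $N_2 \circ \phi_* = \phi_* \circ N_1$. Then $\psi$ intertwines $N_1 \oplus N_1$ with $N_2 \oplus N_2$, the diagonal $\Delta_{M_2}$ is invariant under $N_2 \oplus N_2$ (its restriction being $N_2$ under $T\Delta_{M_2} \cong TM_2$), and $\psi$, being a submersion, is transverse to $\Delta_{M_2}$. Hence the preceding proposition (preimages of invariant submanifolds under tensor-compatible transverse maps are invariant) gives that $R(\phi) = \psi^{-1}(\Delta_{M_2})$ is an invariant submanifold with respect to $N_1 \oplus N_1$.

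For the "if" direction, assume $R(\phi)$ is $N_1 \oplus N_1$-invariant and define $N_2$ pointwise: given $p \in M_2$ and $v \in T_pM_2$, pick $x \in \phi^{-1}(p)$ and a lift $w \in T_xM_1$ with $\phi_* w = v$, and set $N_2(v) := \phi_*(N_1 w)$. Well-definedness splits into two checks. If $w,w' \in T_xM_1$ are two lifts of $v$ over the same point, then $(w,w') \in T_{(x,x)}R(\phi)$, so invariance forces $\phi_*(N_1 w) = \phi_*(N_1 w')$; if $w \in T_xM_1$ and $w' \in T_{x'}M_1$ are lifts of $v$ over two points $x,x' \in \phi^{-1}(p)$, then $(x,x') \in R(\phi)$ and $(w,w') \in T_{(x,x')}R(\phi)$, so again $\phi_*(N_1 w) = \phi_*(N_1 w')$. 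Fiberwise linearity of $N_2$ is immediate, and smoothness follows by writing $N_2 = \phi_* \circ N_1 \circ \sigma_*$ over the domain of any local section $\sigma$ of $\phi$. By construction $N_2 \circ \phi_* = \phi_* \circ N_1$, and uniqueness is clear because $\phi_*$ is fiberwise surjective.

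The main obstacle is the well-definedness of $N_2$ — precisely the independence of the chosen base point $x \in \phi^{-1}(p)$. This is exactly the place where the full hypothesis "$R(\phi)$ is $N_1 \oplus N_1$-invariant" is needed (the weaker statement that each fiber $\phi^{-1}(p)$ is $N_1$-invariant only handles the first check), and it hinges on having the tangent space $T_{(x,x')}R(\phi)$ correctly identified; everything else is routine.
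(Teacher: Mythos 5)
Your proof is correct, and its core --- the identification $T_{(x,y)}R(\phi)=\{(w_1,w_2):\phi_*w_1=\phi_*w_2\}$ followed by the pointwise definition $N_2(v)=\phi_*(N_1 w)$ for a lift $w$ of $v$ --- is exactly the paper's argument. The differences are in packaging and completeness. For the ``only if'' direction the paper simply reads off from the tangent-space description that invariance of $R(\phi)$ is equivalent to $\phi_*(N_1 X_x)=\phi_*(N_1 Y_y)$ whenever $\phi_* X_x=\phi_* Y_y$, which contains both implications at once; you instead write $R(\phi)=\psi^{-1}(\Delta_{M_2})$ with $\psi=\phi\times\phi$ and invoke the paper's earlier proposition on preimages of invariant submanifolds under transverse tensor-commuting maps. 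That is a clean structural alternative (a submersion is transverse to everything, and the diagonal is always $N_2\oplus N_2$-invariant), at the cost of needing $N_2$ in hand, so it only serves the easy direction. For the ``if'' direction you go beyond the paper, which merely asserts ``the map $N_2$ is well defined and satisfies the required property'': you split well-definedness into independence of the lift over a fixed point and independence of the point in the fiber (correctly noting that the second check is where the full invariance of $R(\phi)$, rather than mere fiber-invariance, is used), and you supply smoothness via $N_2=\phi_*\circ N_1\circ\sigma_*$ for a local section $\sigma$ and uniqueness from fiberwise surjectivity of $\phi_*$. These are precisely the details the paper leaves implicit, and your treatment of them is sound.
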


\begin{proof}
Suppose $(x^1, \ldots, x^n)$ and $(y^1, \ldots, y^n)$ denote the coordinates on the two copies of $M_1$ and $(z^1, \ldots, z^m)$ on $M_2$ ($n=$ dim $M_1 \geq$ dim $M_2 = m$)
such that $\phi$ has the canonical equations $\phi (x^i) = z^i$ and $\phi (y^i) = z^i$, for $i = 1, \ldots, m$.
This shows that $R (\phi)$ is an embedded submanifold of $M_1 \times M_1$ with local coordinates $(x^1, \ldots, x^m, x^{m+1}, \ldots, x^n, y^{m+1}, \ldots, y^n)$.
Moreover, for any $(x,y) \in R(\phi)$, we have
$$T_{(x,y)} R(\phi) = \{  (X_x, Y_y) \in T_x M_1 \times T_y M_1 | ~ \phi_* (X_x) = \phi_* (Y_y) \} .$$
Therefore,  $R(\phi)$ is an invariant submanifold of $M_1 \times M_1$ with respect to the $(1,1)$-tensor $N_1 \oplus N_1$ if and only if
$$(N_1 \oplus N_1)(X_x, Y_y) = (N_1(X_x) , N_1(Y_y))$$
is in $T_{(x,y)}(R(\phi))$, for all $(X_x, Y_y) \in T_{(x,y)}(R(\phi))$. This is equivalent to $\phi_* (N_1(X_x)) = \phi_* (N_1(Y_y))$, for all $(X_x, Y_y) \in T_{(x,y)}(R(\phi))$. Then 
for any $p \in M_2$ and $V_p \in T_pM_2$, the map
$N_2 : TM_2 \rightarrow TM_2$ is defined by
$N_2 (V_p) = \phi_* N_1 (X_x)$, for any $x \in \phi^{-1}(p)$ and $X_x \in (\phi_*)^{-1}(V_p)$.
% $V_p \in T_pM_2$, $x \in \phi^{-1}(p)$ and $X_x \in (\phi_*)_x^{-1} (V_p)$. 
The map $N_2$ is well defined and satisfies the
required property.
\end{proof}

\begin{prop}\label{nij-map-grpah-nij-submnfld}
 Let $(M_1, N_1)$ and $(M_2, N_2)$ be two smooth manifolds with $(1,1)$-tensors and $\phi : M_1 \rightarrow M_2$ be a smooth map. Then $\phi$ commute with the $(1,1)$-tensors, that is, $N_2 \circ \phi_* = \phi_* \circ N_1$ if and only if
$$\text{Gr} (\phi) = \{ (x, \phi(x))| ~ x \in M_1 \}$$ is an invariant submanifold of $M_1 \times M_2$ with respect to $N_1 \oplus N_2$.
\end{prop}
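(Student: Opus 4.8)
The plan is to compute the tangent space of the graph explicitly and then read off the invariance condition directly; this will yield both implications simultaneously. First I would record that, since $\phi$ is smooth, $\mathrm{Gr}(\phi)$ is an embedded submanifold of $M_1 \times M_2$, namely the image of the embedding $\iota_\phi : M_1 \rightarrow M_1 \times M_2$, $x \mapsto (x, \phi(x))$, whose differential is $(\iota_\phi)_* = \mathrm{id}_{TM_1} \oplus \phi_*$. Consequently, at any point $(x, \phi(x))$,
$$T_{(x,\phi(x))}\mathrm{Gr}(\phi) = \{ (X_x, \phi_*(X_x)) \mid X_x \in T_x M_1 \} \subseteq T_x M_1 \times T_{\phi(x)} M_2 .$$

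Next I would apply $N_1 \oplus N_2$ to a generic element of this space:
$$(N_1 \oplus N_2)(X_x, \phi_*(X_x)) = (N_1 X_x,\ N_2 \phi_*(X_x)) .$$
By the description above, this vector lies in $T_{(x,\phi(x))}\mathrm{Gr}(\phi)$ if and only if its second component equals $\phi_*$ applied to its first component, i.e.\ $N_2 \phi_*(X_x) = \phi_*(N_1 X_x)$. Therefore the inclusion $(N_1 \oplus N_2)(T\mathrm{Gr}(\phi)) \subseteq T\mathrm{Gr}(\phi)$ — which is precisely the statement that $\mathrm{Gr}(\phi)$ is an invariant submanifold with respect to $N_1 \oplus N_2$ — holds if and only if $N_2 \circ \phi_* = \phi_* \circ N_1$ at every point of $M_1$. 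Spelling this out: if the commutation identity holds, then $(N_1 X_x, N_2\phi_* X_x) = (N_1 X_x, \phi_* N_1 X_x)$ is again of the form $(Y,\phi_* Y)$, so the graph is invariant; conversely, invariance forces $(N_1 X_x, N_2 \phi_* X_x)$ into the tangent space for every $X_x$, which is exactly $N_2 \phi_* X_x = \phi_* N_1 X_x$.

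I do not expect any real obstacle: the only point requiring a little care is the identification of $T\mathrm{Gr}(\phi)$, and that is immediate from $\mathrm{Gr}(\phi) = \iota_\phi(M_1)$ with $(\iota_\phi)_* = \mathrm{id} \oplus \phi_*$. One could alternatively try to derive the statement from Proposition \ref{image-nij-submnfld} applied to $\iota_\phi$ and the (trivially) invariant submanifold $M_1 \hookrightarrow M_1$, but verifying that $\iota_\phi$ commutes with the tensors is itself equivalent to the identity we are characterizing, so the direct computation above is the cleanest route and makes the equivalence transparent.
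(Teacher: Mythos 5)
Your argument is correct and is essentially identical to the paper's own proof: both identify $T_{(x,\phi(x))}\mathrm{Gr}(\phi)$ as $\{(X_x,\phi_*X_x)\}$ and read off that invariance under $N_1\oplus N_2$ is exactly the commutation identity $N_2\circ\phi_*=\phi_*\circ N_1$. No further comment is needed.
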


\begin{proof}
 Note that $\text{Gr} (\phi)$ is a closed embedded submanifold of $M_1 \times M_2$. A tangent vector to the graph at the point $(x, \phi(x))$ consists of a pair $(X_x, \phi_* (X_x))$,
where $x \in M_1$ , $X_x \in T_xM_1$. Thus, $\text{Gr}(\phi)$ is an invariant submanifold of $M_1 \times M_2 $ with respect to the tensor $N_1 \oplus N_2)$ if and only if 
$$( N_1 \oplus N_2 )(X_x , \phi_* (X_x)) = (N_1 (X_x), N_2 (\phi_* (X_x)))$$
is in $T_{(x, \phi(x))}\text{Gr}(\phi)$, for all $X_x \in T_xM_1,~ x \in M_1$. This is equivalent to $N_2 \circ \phi_* (X_x) = \phi_* \circ N_1 (X_x)$, for all $X_x \in T_xM_1,~ x \in M_1$.
Hence the result follows.
\end{proof}

Let $G$ is a Lie group with a multiplicative $(1,1)$-tensor $N: TG \rightarrow TG$. This is equivalent to  $N \circ m_* = m_* \circ (N \oplus N)$, where $m : G \times G \rightarrow G$
is the group multiplication (cf. Remark \ref{lie-gp-multiplicative-tensor}).
Thus, by Proposition \ref{nij-map-grpah-nij-submnfld}, it follows that the multiplicativity of $N$ is being equivalent to the graph 
$$ \text{Gr}(m) = \{ (g, h, gh)| ~ g, h \in G \}$$
of the group multiplication map is an invariant submanifold of $G \times G \times G$ with respect to $N \oplus N \oplus N$.
%, N \oplus N \oplus N).$

More generally, we can prove the following result.
\begin{prop}\label{nij-grpd-graph-inv}
 Let $G \rightrightarrows M$ be a Lie groupoid and $N : TG \rightarrow TG$ be an $(1,1)$-tensor on $G$. Then $N$ is multiplicative
if and only if the graph 
$$ \text{Gr}(m) = \{ (g, h, gh)| ~ t(g) = s(h)\} $$
of the groupoid multiplication map is an invariant submanifold of $G \times G \times G$ with respect to the tensor $N \oplus N \oplus N.$
\end{prop}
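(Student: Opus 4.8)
The plan is to make the submanifold $\text{Gr}(m)$ and its tangent spaces completely explicit, and then to unwind the definition of an invariant submanifold against the pointwise characterization of multiplicativity recorded in Remark~\ref{lie-gp-multiplicative-tensor}(i). First I would note that the set of composable pairs $\{(g,h) \mid t(g) = s(h)\}$ is an embedded submanifold of $G \times G$ (because $s$ and $t$ are submersions), and that $\text{Gr}(m)$ is the image of the closed embedding $\Phi : \{(g,h) \mid t(g)=s(h)\} \to G \times G \times G$, $(g,h) \mapsto (g,h,gh)$; in particular $\text{Gr}(m)$ is a submanifold of $G \times G \times G$. Differentiating $\Phi$, and using that the multiplication $\bullet$ of the tangent Lie groupoid $TG \rightrightarrows TM$ is by construction the tangent prolongation $m_*$ of $m$, one obtains, for every composable pair $(g,h)$,
\[
T_{(g,h,gh)}\text{Gr}(m) = \big\{\, (X_g, Y_h, X_g \bullet Y_h) \ \big|\ X_g \in T_gG,\ Y_h \in T_hG,\ t_*(X_g) = s_*(Y_h) \,\big\}.
\]

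Next I would unwind invariance. By definition $\text{Gr}(m)$ is invariant with respect to $N \oplus N \oplus N$ if and only if, for every composable pair $X_g, Y_h$, the vector
\[
(N \oplus N \oplus N)(X_g, Y_h, X_g \bullet Y_h) = \big(N(X_g),\, N(Y_h),\, N(X_g \bullet Y_h)\big)
\]
again lies in $T_{(g,h,gh)}\text{Gr}(m)$. Comparing with the description above, this membership is equivalent to the conjunction of $t_*(N(X_g)) = s_*(N(Y_h))$ (so that $N(X_g)$ and $N(Y_h)$ are composable in the tangent groupoid) and $N(X_g \bullet Y_h) = N(X_g) \bullet N(Y_h)$; this conjunction is precisely the content of the identity ``$N(X_g \bullet Y_h) = N(X_g) \bullet N(Y_h)$ for all composable $X_g, Y_h$'' appearing in Remark~\ref{lie-gp-multiplicative-tensor}(i) — the first clause being exactly what makes the right-hand side of the second meaningful. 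By that remark this identity is equivalent to the multiplicativity of $N$, which finishes the argument.

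I expect the only delicate point to be the bookkeeping around the first-coordinate condition $t_*(N(X_g)) = s_*(N(Y_h))$, i.e.\ the compatibility of $N$ with $s_*$ and $t_*$: in the direction ``multiplicative $\Rightarrow \text{Gr}(m)$ invariant'' this is available for free because a multiplicative $N$ is a morphism of $TG \rightrightarrows TM$ and hence intertwines $s_*, t_*$ with $N_M$, while in the converse direction it is simply read off from the shape of $T_{(g,h,gh)}\text{Gr}(m)$. I would also remark that one should resist the temptation to deduce the statement by applying Proposition~\ref{nij-map-grpah-nij-submnfld} to $m$: since $m$ is defined only on the submanifold of composable pairs $\{(g,h)\mid t(g)=s(h)\} \subset G \times G$, the restriction of $N \oplus N$ there is not a priori a well-defined $(1,1)$-tensor (that would already presuppose the $s_*, t_*$-compatibility of $N$), so the direct tangent-space computation above is the cleaner route.
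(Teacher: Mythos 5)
Your proof is correct and follows essentially the same route as the paper's: describe $T_{(g,h,gh)}\mathrm{Gr}(m)$ as the set of triples $(X_g, Y_h, X_g \bullet Y_h)$ with $t_*(X_g)=s_*(Y_h)$, apply $N\oplus N\oplus N$, and identify membership in that tangent space with the multiplicativity identity of Remark~\ref{lie-gp-multiplicative-tensor}(i). You are in fact a bit more careful than the paper in making the composability clause $t_*(N(X_g))=s_*(N(Y_h))$ explicit, but this is a refinement of the same argument, not a different one.
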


\begin{proof}
 Let $(g, h , gh) \in \text{Gr}(m)  \subset G \times G \times G$. Then a vector tangent to $\text{Gr}(m)$ at the point $(g, h, gh)$ is of the form $(X_g, Y_h, X_g \bullet Y_h)$,
where $X_g \in T_gG,~  Y_h \in T_hG$ are such that $t_* (X_g)= s_* (Y_h)$. Therefore, $\text{Gr}(m)$ is an invariant submanifold of $G \times G \times G$
with respect to the tensor $N \oplus N \oplus N$ if and only if
$$(N \oplus N \oplus N)( X_g, Y_h, X_g \bullet Y_h ) = ( N(X_g) , N(Y_h), N(X_g \bullet Y_h) )$$
is in $T_{(g,h,gh)} \text{Gr}(m)$. This is equivalent to the condition that $N (X_g \bullet Y_h) = N(X_g) \bullet N(Y_h),$
that is, $N$ is a Lie groupoid morphism from the tangent Lie groupoid to itself.
\end{proof}

\begin{defn}
 A {\sf Nijenhuis groupoid} is a Lie groupoid $G \rightrightarrows M$ together with a multiplicative Nijenhuis tensor $N$ (cf. Definition \ref{mul-gpd}). A Nijenhuis groupoid may also be denoted
by $(G \rightrightarrows M, N)$.
\end{defn}

\begin{exam}
%\begin{itemize}
(i) Any Lie groupoid with the identity Nijenhuis tensor is a Nijenhuis groupoid.

\medskip

(ii) Let $M$ be a smooth manifold with a Nijenhuis tensor $N : TM \rightarrow TM$. Then the pair groupoid $M \times M \rightrightarrows M$ is a Nijenhuis 
groupoid with Nijenhuis tensor $N \oplus N$.
%\end{itemize}
\end{exam}

%\begin{remark}
Let $(G \rightrightarrows M, N)$ be a Nijenhuis groupoid. Since $N : TG \rightarrow TG$ is a Lie groupoid morphism from the tangent Lie groupoid to itself, we have
% The source $s$ and the target $t$ are both commute with Nijenhuis tensors, that is,
$$ s_* \circ N = N_M \circ s_*  ~~ \text{ and }~~ t_* \circ N = N_M \circ t_*.$$
Therefore, it follows that the graph Gr$((s,t)) = \{ (g, s(g), t(g)) |~ g \in G \} \subset G \times M \times M$ of the map
$(s,t) : G \rightarrow M \times M , ~ g \mapsto (s(g), t(g))$ is an invariant submanifold with respect to the tensor $N \oplus N_M \oplus N_M$.
Hence by Proposition \ref{nij-map-grpah-nij-submnfld}, the map $(s,t) : G \rightarrow M \times M$ commute with the Nijenhuis tensors, that is,
$$ (s,t)_* \circ N = (N_M \oplus N_M) \circ (s,t)_*.$$
%\end{remark}

%Let $M$ be a smooth manifold and $N: TM \rightarrow TM$ a Nijenhuis tensor. If $S \hookrightarrow M$ is a submanifold and $N$ is tangent to the submanifold $S$, then $N$
%induces a Nijenhuis tensor $N_S$ on $S$. Moreover the deformed Lie algebroid $(TS)_{N_S} \rightarrow S$ is a Lie subalgebroid of $(TM)_N \rightarrow M.$

\begin{defn}
 Let $(G \rightrightarrows M, N)$ be a Nijenhuis groupoid. A Lie subgroupoid $H \rightrightarrows S$ is called an {\sf invariant subgroupoid}
of $(G \rightrightarrows M, N)$ if $H \hookrightarrow G$ is an invariant submanifold with respect to $N$.
\end{defn}

\begin{defn}
 Let $A \rightarrow M$ be a Lie algebroid and $N_A : TA \rightarrow TA$ be an infinitesimal multiplicative Nijenhuis tensor on $A$. A Lie subalgebroid
$B \rightarrow S$ of $A \rightarrow M$ is called an {\sf invariant subalgebroid} if $B \hookrightarrow A$ is an invariant submanifold with respect to $N_A$.
\end{defn}

\begin{prop}\label{subalgbd-base-inv}
 Let $A \rightarrow M$ be a Lie algebroid and $N_A : TA \rightarrow TA$ be an infinitesimal multiplicative Nijenhuis tensor on $A$ with base map $N_M : TM \rightarrow TM$. If $B \rightarrow S$
is an invariant subalgebroid as above, then $S \hookrightarrow M$ is an invariant submanifold with respect to $N_M$.
\end{prop}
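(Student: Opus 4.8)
The plan is to deduce the statement from Proposition \ref{image-nij-submnfld} applied to the bundle projection $p : A \rightarrow M$, viewed as a smooth map between the manifold $A$ equipped with the $(1,1)$-tensor $N_A$ and the manifold $M$ equipped with the $(1,1)$-tensor $N_M$.

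First I would verify that $p$ commutes with the two $(1,1)$-tensors, i.e. $N_M \circ p_* = p_* \circ N_A$. This is because the bundle projection of the tangent Lie algebroid $TA \rightarrow TM$ is exactly the tangent map $p_* : TA \rightarrow TM$, and, by hypothesis, $(N_A, N_M)$ is a Lie algebroid morphism from the tangent Lie algebroid $TA \rightarrow TM$ to itself; hence the underlying bundle map $N_A : TA \rightarrow TA$ covers the base map $N_M : TM \rightarrow TM$ along $p_*$, which is precisely the claimed identity.

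Next I would check the remaining hypotheses of Proposition \ref{image-nij-submnfld} with $S_1 = B$. Since $B \rightarrow S$ is a Lie subalgebroid of $A \rightarrow M$, the total space $B$ is a vector subbundle of $A|_S$ sitting over the submanifold $S \hookrightarrow M$; in particular $p(B) = S$ is a submanifold of $M$, and since the restricted projection $p|_B : B \rightarrow S$ is a vector bundle projection, hence a surjective submersion, one has $p_*(TB) = TS = T(p(B))$. Finally, $B \hookrightarrow A$ is an invariant submanifold with respect to $N_A$ by the very definition of an invariant subalgebroid. Thus Proposition \ref{image-nij-submnfld} applies with $M_1 = A$, $M_2 = M$, $N_1 = N_A$, $N_2 = N_M$, $\phi = p$, and yields that $p(B) = S \hookrightarrow M$ is an invariant submanifold with respect to $N_M$, which is the assertion.

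The only mildly delicate point is the compatibility $N_M \circ p_* = p_* \circ N_A$ extracted from the morphism condition; everything else is formal. Alternatively, one can argue directly: a vector $X \in T_xS$ lifts via the zero section $0 : S \rightarrow B \subseteq A$ to a vector $\widetilde{X} \in T_{0_x}B$ which also lies in $T_{0_x}(0_M)$, and since $T_{0_x}B \cap T_{0_x}(0_M)$ equals the tangent space to the zero section of $B$, the vector $N_A(\widetilde{X})$ — which lies in $T_{0_x}B$ by invariance and in $(0_M)_*(T_xM)$ because $N_A$ restricts to $N_M$ on the zero section — must equal $(0_M)_*(Y)$ for some $Y \in T_xS$, forcing $N_M(X) = Y \in T_xS$.
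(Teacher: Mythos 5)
Your proposal is correct. Your main argument runs through the bundle projection: you extract from the Lie algebroid morphism condition the identity $N_M \circ p_* = p_* \circ N_A$ (i.e.\ $N_A$ \emph{covers} $N_M$), note that $p|_B : B \to S$ is a surjective submersion so that $p(B)=S$ and $p_*(TB)=TS$, and then invoke Proposition \ref{image-nij-submnfld}. The paper instead works with the dual fact that $N_A$ \emph{restricts} to $N_M$ on the zero section: it identifies $N_M = N_A|_0$ and $TS = (TB)|_0$, and computes directly $N_M(TS) = (N_A(TB))|_0 \subseteq (TB)|_0 = TS$. Both routes rest on the same structural input --- that $(N_A, N_M)$ is a vector bundle morphism of $TA \to TM$ --- viewed either through the projection or through the zero section; your approach has the small advantage of reusing an earlier proposition rather than redoing the computation, while the paper's is shorter and self-contained. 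Your closing "alternative direct argument" via the zero section is essentially the paper's proof, spelled out with the splitting $T_{0_x}B \cong T_xS \oplus B_x$ made explicit, and it is also correct.
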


\begin{proof}
 Note that, $N_M = N_A |_0$ is the restriction of $N_A$ to the zero section and $TS = (TB)|_0$ is the zero section of the bundle
$TB \rightarrow TS$. Therefore, we have
$$ N_M (TS) =  N_A |_0 ( (TB)|_0 ) = (N_A (TB))|_0 \subseteq (TB)|_0 = TS.$$
Hence the result follows.
\end{proof}

\begin{prop}\label{subgrpd-subalgbd}
 Let $(G \rightrightarrows M, N)$ be a Nijenhuis groupoid with Lie algebroid $A \rightarrow M$. If $H \rightrightarrows S$ is an invariant subgroupoid of $(G \rightrightarrows M, N)$ with Lie algebroid
$B \rightarrow S$, then $B \rightarrow S$ is an invariant subalgebroid of $A \rightarrow M$.
\end{prop}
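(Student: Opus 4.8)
To prove Proposition~\ref{subgrpd-subalgbd} I would exploit the construction of the Lie algebroid of a Lie subgroupoid together with the naturality of the tensor $\mathbb{T}N = \sigma^{-1}\circ TN\circ \sigma$ under which $N_A$ is defined as a restriction. Recall that if $H\rightrightarrows S$ is a Lie subgroupoid of $G\rightrightarrows M$, then $B = \ker(s_*)|_S \cap TH = T^s_S H$ sits inside $A = T^s_M G$ as a Lie subalgebroid, and the inclusion $B\hookrightarrow A$ is precisely the restriction of the inclusion $TH\hookrightarrow TG$ (at points of $S$, in the $s$-vertical directions). So the statement reduces to: the tensor $\mathbb{T}N$ on $TG$ restricts to $\mathbb{T}N$-invariance of the submanifold $TH\hookrightarrow TG$, and this invariance descends to the submanifold $B = T^s_S H \hookrightarrow A = T^s_M G$ of the further restriction $N_A = (\mathbb{T}N)_A$.

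First I would record that since $H\hookrightarrow G$ is an invariant submanifold with respect to $N$, i.e. $N(TH)\subseteq TH$, applying $T(-)$ gives $TN(T(TH))\subseteq T(TH)$; and since the canonical involution $\sigma : T(TG)\to T(TG)$ restricts to an involution of $T(TH)$ (naturality of $\sigma$ with respect to the inclusion $H\hookrightarrow G$), we get $\mathbb{T}N(T(TH)) = \sigma^{-1}TN\sigma(T(TH))\subseteq \sigma^{-1}TN(T(TH))\subseteq \sigma^{-1}(T(TH)) = T(TH)$. Hence $TH\hookrightarrow TG$ is an invariant submanifold with respect to $\mathbb{T}N$. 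Next I would restrict to the unit-direction algebroids: $A = T^s_M G\hookrightarrow TG$ is invariant under $\mathbb{T}N$ (stated in the excerpt, with $N_A = (\mathbb{T}N)_A$), and inside it $B = T^s_S H$ is exactly $A\cap TH$, cut out compatibly. Using Proposition~\ref{image-nij-submnfld} (or directly, since $B = TH\cap A$ as submanifolds and both are $\mathbb{T}N$-invariant with transverse-enough intersection so that $T_bB = T_b(TH)\cap T_b A$ for $b\in B$), I conclude $N_A(TB)\subseteq TB$, which is exactly the assertion that $B\hookrightarrow A$ is an invariant submanifold with respect to $N_A$, i.e. $B\to S$ is an invariant subalgebroid of $A\to M$.

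**The main obstacle.** The delicate point is the last step: verifying that at each $b\in B$ one genuinely has $T_b B = T_b(TH)\cap T_b A$ inside $T_b(TG)$, so that $\mathbb{T}N$-invariance of $TH$ and of $A$ jointly force $N_A$-invariance of $B$. This is a transversality/dimension-count statement about how the double vector bundle $TA$ sits inside $T(TG)$ and how $TB$ sits inside it; it is essentially the statement that the Lie-functor commutes with taking the intersection defining a subalgebroid, which is standard (cf. \cite{mac-xu2}) but must be invoked with care. Once that identification is in place, the rest is the formal chase above. I would therefore organize the proof as: (1) $\mathbb{T}N$ preserves $TH$; (2) identify $B = TH\cap A$ and $TB = T(TH)\cap TA$ compatibly; (3) combine (1) with the given $\mathbb{T}N$-invariance of $A$ to get $N_A(TB)\subseteq TB$.
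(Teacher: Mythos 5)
Your proposal is correct and follows essentially the same route as the paper's own proof: both identify $B$ as the intersection of $A$ (or $A|_S$) with $TH$ inside $TG$, establish that $TH$ is invariant under $\mathbb{T}N=\sigma^{-1}\circ TN\circ\sigma$ because $H$ is invariant under $N$, and combine this with the $\mathbb{T}N$-invariance of $A$ to conclude $N_A(TB)\subseteq TB$. You in fact supply more detail than the paper at the two points it glosses over (the naturality of $\sigma$ under the inclusion $H\hookrightarrow G$, and the identification $TB=T(TH)\cap TA$), so no further changes are needed.
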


\begin{proof}
 Since $H \rightrightarrows S$ is a Lie subgroupoid of $G \rightrightarrows M$, it follows that $B \rightarrow S$ is a Lie subalgebroid of $A \rightarrow M$.
Moreover, we have
$$ B = A |_S \cap TH|_S.$$
Since $\mathbb{T} N = \sigma^{-1} \circ TN \circ \sigma$ is tangent to the submanifold $A$, the restriction
$N_A = (\mathbb{T} N)_A$ is tangent to $A|_S$. Moreover, since 
$H \hookrightarrow G$ is an invariant submanifold with respect to $N$, $TH \hookrightarrow TG$ is an invariant submanifold with respect to $\mathbb{T} N$.
Therefore, it follows that $B \hookrightarrow A$ is an invariant submanifold with respect to $N_A$.
\end{proof}

Thus, by combinding Propositions \ref{subgrpd-subalgbd} and \ref{subalgbd-base-inv}, we get the following.
\begin{corollary}
 Let $(G \rightrightarrows M, N)$ be a Nijenhuis groupoid and $H \rightrightarrows S$ an invariant subgroupoid. Then $S \hookrightarrow M$ is an invariant submanifold
of $M$ with respect to $N_M$.
\end{corollary}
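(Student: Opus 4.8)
The plan is to obtain this Corollary by chaining the two preceding propositions, exactly as the sentence introducing it suggests. First I would pass from the groupoid to its Lie algebroid: by Proposition \ref{subgrpd-subalgbd}, since $H \rightrightarrows S$ is an invariant subgroupoid of the Nijenhuis groupoid $(G \rightrightarrows M, N)$ with Lie algebroid $B \rightarrow S$, the pair $B \rightarrow S$ is an invariant subalgebroid of $A \rightarrow M$ with respect to the infinitesimal multiplicative Nijenhuis tensor $N_A = (\mathbb{T}N)_A$ associated to $N$. At this point I would recall that the base $(1,1)$-tensor of $N_A$ is precisely $N_M$, the restriction of $N$ to the unit manifold $M$: this is immediate from the construction of $N_A$, since restricting $N_A$ to the zero section of $A$ agrees with restricting $\mathbb{T}N$, hence $N$, to $M \hookrightarrow G$.

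Next I would feed this into Proposition \ref{subalgbd-base-inv}, applied to the Lie algebroid $A \rightarrow M$ equipped with $N_A$ (whose base map is $N_M$) and to the invariant subalgebroid $B \rightarrow S$: the conclusion is that $S \hookrightarrow M$ is an invariant submanifold with respect to $N_M$, which is exactly the statement to be proved.

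Since the argument is a two-step composition of results already established in the excerpt, I do not anticipate any genuine obstacle. The only point deserving a moment of care is the bookkeeping identification of the base tensor of $N_A$ with $N_M$, so that the hypotheses of Proposition \ref{subalgbd-base-inv} line up with the invariance statement about $N_M$ that we want; once this is noted, the proof is complete.
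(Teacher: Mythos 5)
Your proof is correct and follows exactly the paper's route: the corollary is obtained by combining Proposition \ref{subgrpd-subalgbd} (passing from the invariant subgroupoid $H \rightrightarrows S$ to the invariant subalgebroid $B \rightarrow S$) with Proposition \ref{subalgbd-base-inv} (passing from the invariant subalgebroid to its base), and your remark that the base map of $N_A$ is $N_M$ is the same bookkeeping the paper relies on.
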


\section{Compatible Lie algebroids}\label{sec4}
In this section, we study compatible Lie algebroid structures on a vector bundle.
\begin{defn}\label{defn-comp-lie-algbds}
 Given a vector bundle $A$ over $M$, two Lie algebroid structures $([~,~]_1, \rho_1)$ and $([~,~]_2, \rho_2)$ on $A$ are said to be {\sf compatible} if
$$ [~,~] = [~, ~]_1 + [~, ~]_2 \text{ , } \rho = \rho_1 + \rho_2$$
also defines  a Lie algebroid structure on $A$.
\end{defn}
In this case, for each $\lambda \in \mathbb{R}$, the bracket
$ [~,~]^\lambda = [~, ~]_1 + \lambda [~, ~]_2$ and the anchor $\rho^\lambda = \rho_1 + \lambda \rho_2$
defines a Lie algebroid structure on $A$.

It is easy to check that the compatibility condition is equivalent to the followings
\begin{equation}\label{comp-1}
 J(X, Y, Z) :=  \big( [[X,Y]_1, Z]_2 +   [[X,Y]_2, Z]_1 \big) + c.p. = 0 , 
\end{equation}
\begin{equation}\label{comp-2}
 \Theta (X, Y) := [\rho_1 (X), \rho_2 (Y)] + [\rho_2 (X), \rho_1 (Y)] - \rho_1 [X, Y]_2 - \rho_2 [X,Y]_1 = 0,
\end{equation}
for all $X, Y, Z \in \Gamma A$, where $c.p.$ means the cyclic permutation of $(X, Y, Z).$

\medskip

It is known that Lie algebroid structures on a vector bundle $A$ over $M$ are in a one-to-one correspondence with homological vector fields of degree
one on the supermanifold $(M, \largewedge^\bullet A^*)$ \cite{vain}. Let $([~,~], \rho)$ be a Lie algebroid structure on $A$ and $(x)$ be a chart of $M$ over which the bundle $A$ is trivial. Given a local basis $(\varepsilon_i)$
of sections of $A$ with dual basis $(\xi^i)$ of $A^*$, suppose
$$ [\varepsilon_i, \varepsilon_j] = \sum c^k_{ij} \varepsilon_k ~~\text{ and }~~ \rho(\varepsilon_i) = \sum a^\alpha_i \partial_{x^\alpha} ,$$
where $c^k_{ij}$'s and $a^\alpha_i$'s are structure functions for the Lie algebroid $(A, [~,~], \rho)$ with respect to the local basis $(\varepsilon_i)$. Then the homological vector field $V$ on $(M, \largewedge^\bullet A^*)$ corresponding to the Lie algebroid structure
is given by
$$ V = \sum c^k_{ij} \xi^i \xi^j \partial_{\xi^k} + \sum a_i^\alpha \xi^i \partial_{x^\alpha} .$$

We have the following equivalent characterizations of compatible Lie algebroid structures.

\begin{thm}\label{comp-equiv}
 Let $A$ be a vector bundle over $M$ and $([~,~]_1, \rho_1) , ([~,~]_2, \rho_2)$ be two Lie algebroid structures on $A$.
If $d_1, d_2$ are the corresponding Lie algebroid
differentials on $\Gamma{(\largewedge^\bullet A^*)}$ and $\pi_1, \pi_2$ are the corresponding dual linear Poisson structures on $A^*$, then the followings
are equivalent
\begin{itemize}
 \item[(i)] $([~,~]_1, \rho_1)$ and $([~,~]_2, \rho_2)$ are compatible$;$
 \item[(ii)] differentials $d_1 ,d_2 : \Gamma{(\largewedge^\bullet A^*)} \rightarrow \Gamma{(\largewedge^{\bullet +1}A^*)}$ are compatible
in the sense that their graded commutator $[d_1, d_2] = 0$, that is,
$$ d_1 d_2 + d_2 d_1 = 0 ;$$
 \item[(iii)] $\pi_1$ and $\pi_2$ are compatible Poisson structures on $A^*$, that is,
$ [\pi_1, \pi_2] = 0 ;$
 \item[(iv)] corresponding homological vector fields $V_1$ and $V_2$ on the supermanifold $(M, \largewedge^\bullet A^*)$ are compatible, that is, $[V_1, V_2] = 0.$
\end{itemize}
\end{thm}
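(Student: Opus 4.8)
The plan is to prove the equivalences by a cyclic chain, exploiting the well-known dictionary between the various objects attached to a Lie algebroid. The cleanest route is (iv) $\Leftrightarrow$ (i) directly from the structure-function description, then (i) $\Leftrightarrow$ (ii) via the explicit formula for the Lie algebroid differential, and finally the passage to (iii) through the identification of homological vector fields on $(M,\largewedge^\bullet A^*)$ with linear vector fields/derivations, which in turn corresponds to the canonical linear Poisson structure on $A^*$. Throughout I would use that for a linear combination of the data the bracket $[~,~]=[~,~]_1+[~,~]_2$, $\rho=\rho_1+\rho_2$ defines a Lie algebroid iff the Jacobi identity and the anchor-compatibility hold, which by bilinearity reduces precisely to the two cross-term conditions (\ref{comp-1}) and (\ref{comp-2}).

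First I would establish (i) $\Leftrightarrow$ (iv). Writing $V_a = \sum (c^k_{ij})_a \xi^i\xi^j\partial_{\xi^k} + \sum (a^\alpha_i)_a \xi^i\partial_{x^\alpha}$ for $a=1,2$, one has that the operator associated with the summed data is $V = V_1 + V_2$. The condition that $V$ be homological, i.e. $[V,V]=0$, expands by graded bilinearity of the commutator as $[V_1,V_1] + 2[V_1,V_2] + [V_2,V_2] = 0$; since $V_1$ and $V_2$ are each homological, this is equivalent to $[V_1,V_2]=0$. Hence $([~,~],\rho)$ is a Lie algebroid iff $[V_1,V_2]=0$, which is exactly (i) $\Leftrightarrow$ (iv). The same bilinearity argument gives (i) $\Leftrightarrow$ (ii): the Lie algebroid differential $d$ of the summed structure is $d_1+d_2$ (clear from the Cartan-type formula for $d$ in terms of $[~,~]$ and $\rho$), so $d^2=0$ unpacks as $d_1^2 + (d_1d_2+d_2d_1) + d_2^2 = 0$, and since $d_1^2=d_2^2=0$ this is $[d_1,d_2]=0$. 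For (ii) $\Leftrightarrow$ (iv) one may alternatively observe that under the identification of $\Gamma(\largewedge^\bullet A^*)$ with functions on the supermanifold, the differential $d_a$ \emph{is} the derivation given by $V_a$, so the two graded commutators literally coincide.

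For the equivalence with (iii) I would invoke the standard fact that a Lie algebroid structure on $A$ is the same data as a linear Poisson structure $\pi_A$ on the total space of $A^*$ (the dual linear Poisson structure), with the correspondence being additive: the linear Poisson bivector attached to $[~,~]_1+[~,~]_2$, $\rho_1+\rho_2$ is $\pi_1+\pi_2$. Then $\pi_1+\pi_2$ is Poisson iff $[\pi_1+\pi_2,\pi_1+\pi_2]=0$, and expanding the Schouten bracket by bilinearity gives $[\pi_1,\pi_1] + 2[\pi_1,\pi_2] + [\pi_2,\pi_2]=0$, which in view of $[\pi_1,\pi_1]=[\pi_2,\pi_2]=0$ reduces to $[\pi_1,\pi_2]=0$. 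This yields (i) $\Leftrightarrow$ (iii). Assembling these, all four statements are equivalent.

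The main obstacle is not any single computation but making the additivity claims precise: one must check carefully that each of the three assignments (structure $\mapsto$ differential, structure $\mapsto$ homological vector field, structure $\mapsto$ dual linear Poisson bivector) is genuinely $\mathbb{R}$-linear in the structure, so that the data sum passes through each construction. Once this linearity is in hand, every equivalence is the same "polarization" identity $[\Phi(X_1+X_2),\Phi(X_1+X_2)] = [\Phi X_1,\Phi X_1] + 2[\Phi X_1,\Phi X_2] + [\Phi X_2,\Phi X_2]$ for a quadratic obstruction $\Phi(\cdot)\mapsto[\Phi(\cdot),\Phi(\cdot)]$. I would present the linearity of each construction as a short remark, citing \cite{vain, mac-xu} for the underlying correspondences, and then run the polarization argument uniformly.
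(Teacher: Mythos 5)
Your proposal is correct, but it takes a genuinely different route from the paper for two of the three equivalences. The paper only uses your ``polarization'' argument for (i) $\Leftrightarrow$ (iv); for (i) $\Leftrightarrow$ (ii) and (i) $\Leftrightarrow$ (iii) it instead computes directly on generators, showing
$(d_1 d_2 + d_2 d_1)(f)(X,Y) = \Theta(X,Y)(f)$ and $(d_1 d_2 + d_2 d_1)(\alpha)(X,Y,Z) = \sum_{c.p.}\Theta(X,Y)\langle\alpha,Z\rangle + \alpha(J(X,Y,Z))$, then extending by the derivation property, and similarly
$[\pi_1,\pi_2](l_X,l_Y,l_Z) = l_{J(X,Y,Z)}$ and $[\pi_1,\pi_2](l_X,l_Y,f\circ q_*) = -\Theta(X,Y)(f)\circ q_*$ on linear and pullback functions. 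Your uniform approach is shorter and more conceptual, but it shifts the burden onto the additivity and faithfulness of the three correspondences (structure $\mapsto$ differential, $\mapsto$ homological vector field, $\mapsto$ dual linear Poisson bivector) and on the fact that for the summed \emph{almost}-Lie-algebroid data the vanishing of the quadratic obstruction ($d^2=0$, $[\pi,\pi]=0$, $[V,V]=0$) is exactly the Jacobi/anchor condition --- you correctly identify this as the point needing care, and it is standard (Vaintrob, Mackenzie--Xu), so there is no gap. What the paper's computational route buys in exchange is the explicit identification of each graded commutator with the concrete tensors $J$ and $\Theta$ of equations (\ref{comp-1}) and (\ref{comp-2}), which in particular exhibits the obstructions as tensorial and gives the reverse implications (ii) $\Rightarrow$ (i) and (iii) $\Rightarrow$ (i) for free from the same formulas.
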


\begin{proof}
 (i) $\Longrightarrow$ (ii): For any $f \in C^\infty(M)$ and $X, Y \in \Gamma A$, we have
\begin{align*}
 &(d_1 d_2 + d_2 d_1)(f) (X, Y) \\
=&~  \rho_1 (X)  \langle  d_2f , Y  \rangle - \rho_1 (Y)  \langle  d_2f , X \rangle - \langle  d_2f , [X,Y]_1 \rangle 
 + \rho_2 (X)  \langle  d_1f , Y  \rangle - \rho_2 (Y)  \langle  d_1f , X \rangle - \langle  d_1f , [X,Y]_2 \rangle \\
=&~ \rho_1(X) \rho_2 (Y) f - \rho_1(Y) \rho_2(X) f - \rho_2 ([X,Y]_1)f 
 + \rho_2(X) \rho_1 (Y) f - \rho_2(Y) \rho_1(X) f - \rho_1 ([X,Y]_2)f \\
=&~  \Theta (X, Y)(f) = 0 ~~~~~ \quad (\text{by using Eqn. } (\ref{comp-2})).
\end{align*}
For any $\alpha \in \Gamma{A^*}$ and $X, Y, Z \in \Gamma A$, we also have
\begin{align*}
 &(d_1 d_2 + d_2 d_1)(\alpha) (X, Y, Z) \\
=&~ \rho_1 (X) \cdot \langle d_2 (\alpha) (Y,Z) \rangle - \rho_1 (Y) \cdot \langle d_2 (\alpha) (X, Z) \rangle + \rho_1 (Z) \cdot \langle d_2 (\alpha) (X, Y)  \rangle\\
& -d_2(\alpha) ([X,Y]_1, Z) + d_2(\alpha) ([X,Z]_1, Y) - d_2 (\alpha)([Y, Z]_1, X) \\
&+ \rho_2 (X) \cdot \langle d_1 (\alpha) (Y,Z) \rangle - \rho_2 (Y) \cdot \langle d_1 (\alpha) (X, Z) \rangle + \rho_2 (Z) \cdot \langle d_1 (\alpha) (X, Y)  \rangle\\
& -d_1(\alpha) ([X,Y]_2, Z) + d_1(\alpha) ([X,Z]_2, Y) - d_1 (\alpha)([Y, Z]_2, X) \\
=&~ \Theta (X, Y) \langle \alpha, Z \rangle + \Theta (Y, Z) \langle  \alpha, X \rangle + \Theta (Z, X) \langle \alpha, Y \rangle + \alpha (J(X, Y, Z)) \\
=&~ 0 ~~ \quad (\text{by using Equations } (\ref{comp-1}) \text{ and } (\ref{comp-2})).
\end{align*}
Thus, it follows that, $d_1 d_2 + d_2 d_1 = 0$ on $C^\infty(M)$ and $\Gamma A^*$.
Moreover, since
$$ (d_1 d_2 + d_2 d_1 ) (\alpha \wedge \beta) = (d_1 d_2 + d_2 d_1) \alpha \wedge \beta + \alpha \wedge (d_1 d_2 + d_2 d_1) \beta ,$$
for any $\alpha, \beta \in \Gamma {(\largewedge^\bullet A^*)}$, we have $d_1 d_2 + d_2 d_1 \equiv 0$ on $\Gamma {(\largewedge^\bullet A^*)}$.

\medskip
(ii) $\Longrightarrow$ (i): This follows from the previous calculation.

%Suppose $(d_1 d_2 + d_2 d_1)(f) \equiv 0$. Then from the previous calculation, for any
%$f \in C^\infty(M), \alpha \in \Gamma A^*$ and $X, Y , Z \in \Gamma A$, we have
%$$ \Theta (X, Y)(f) = 0 , \quad \alpha (J(X, Y, Z)) = 0 .$$
%Since $f \in C^\infty(M)$ and $\alpha \in \Gamma A^*$ are arbitrary, it follows that
%$$\Theta (X, Y) = 0 \quad \text{ and } \quad J(X, Y, Z) = 0 , $$
%for all $X, Y, Z \in \Gamma A.$ Thus the Lie algebroid structures $[~,~]_1$ and $[~,~]_2$ are compatible.

\medskip
(i) $\Longleftrightarrow$ (iii): Let $q_*$ denote the projection map of the bundle $A^*$ over $M$. Then for any $f \in C^\infty(M)$ and $X, Y, Z \in \Gamma A$, we have
\begin{align*}
 &[\pi_1 , \pi_2 ] (l_X, l_Y, l_Z)\\
=&~ \pi_1 (\pi_2 (l_X, l_Y) , l_Z) - \pi_1 ( \pi_2 (l_X , l_Z), l_Y) + \pi_1 ( \pi_2 (l_Y, l_Z), l_X) \\
& + \pi_2 (\pi_1 (l_X, l_Y) , l_Z) - \pi_2 ( \pi_1 (l_X , l_Z), l_Y) + \pi_2 ( \pi_1 (l_Y, l_Z), l_X) \\
=&~ l_{[[X,Y]_2, Z]_1} - l_{[[X,Z]_2, Y]_1} + l_{[[Y,Z]_2 , X]_1} + l_{[[X,Y]_1, Z]_2} - l_{[[X,Z]_1, Y]_2} + l_{[[Y,Z]_1 , X]_2} \\
=&~ l_{J(X, Y, Z)},
\end{align*}
and
\begin{align*}
 &[\pi_1 , \pi_2] (l_X, l_Y, f \circ q_*) \\
=&~ \pi_1 (\pi_2 (l_X, l_Y) , f \circ q_*) - \pi_1 ( \pi_2 (l_X , f \circ q_*), l_Y) + \pi_1 ( \pi_2 (l_Y, f \circ q_*), l_X) \\
& + \pi_2 (\pi_1 (l_X, l_Y) , f \circ q_*) - \pi_2 ( \pi_1 (l_X , f \circ q_*), l_Y) + \pi_2 ( \pi_1 (l_Y, f \circ q_*), l_X) \\
=&~ (\rho_1 ([X,Y]_2) f) \circ q_* + (\rho_1  (Y) \rho_2 (X) f) \circ q_*  - (\rho_1(X) \rho_2(Y)f) \circ q_* \\
& + (\rho_2 ([X,Y]_1) f) \circ q_* + (\rho_2  (Y) \rho_1 (X) f) \circ q_*  - (\rho_2(X) \rho_1(Y)f) \circ q_* \\
%=& \bigg[ \rho_1 ([X,Y]_2) f  + \rho_2 ([X,Y]_1) f  - [\rho_1X , \rho_2 Y] f - [\rho_2 X , \rho_1 Y]f  \bigg] \circ q_* \\
%=& - (d_1 d_2 + d_2 d_1)(f)(X,Y) = 0
=&~ - \Theta(X,Y)(f) \circ q_* .
\end{align*}
Since the space of smooth functions on $A^*$ are generated by linear and pull back functions, it follows from the above observation that
(i) is equivalent to (iii).

\medskip
(i)  $\Longrightarrow$ (iv): The homological vector field corresponding to the Lie algebroid structure $([~,~] = [~,~]_1 + [~,~]_2, \rho = \rho_1 + \rho_2)$
is given by
$$ \sum (c^k_{ij} + d^k_{ij}) \xi^i \xi^j \partial_{\xi^k} + \sum (a_i^\alpha + b_i^\alpha) \xi^i \partial_{x^\alpha}  = V_1 + V_2 ,$$
where $\{c^k_{ij}, a^\alpha_i \}$ and $\{d^k_{ij}, b^\alpha_i \}$ are structure functions for Lie algebroid structures $([~,~], \rho_1)$ and $([~,~]_2, \rho_2)$, respectively.
Therefore,
$$ 0 = [V_1 + V_2 , V_1 + V_2] = [V_1,V_1] + [V_1,V_2] + [V_2,V_1] + [V_2,V_2] = 2 [V_1 , V_2] .$$
The converse part (iv) $\Longrightarrow$ (i) is similar.
\end{proof}

\begin{remark}
 If $([~,~]_1, \rho_1)$ and $([~,~]_2, \rho_2)$ are compatible Lie algebroid structures on $A$, then the differential of the Lie algebroid $(A, [~,~] = [~,~]_1 + [~,~]_2, \rho = \rho_1 + \rho_2)$ is given by
$d = d_1 + d_2$, and the dual linear Poisson structure on $A^*$ is given by $\pi = \pi_1 + \pi_2$.
\end{remark}

\begin{exam}\label{comp-lie-exam}
% \begin{itemize}
(i) Compatible Lie algebra structures on a vector space are examples of compatible Lie algebroids over a point \cite{bai-wu}.

\medskip

(ii) (Nijenhuis tensors)

Let $M$ be a smooth manifold and $N : TM \rightarrow TM$ be a Nijenhuis tensor on $M$. 
Then for each $k \geq 0$, the hierarchy of Lie algebroid structures $([~,~]_{N^k}, id_{N^k}= N^k)$ on the tangent bundle $TM$ are  pairwise compatible \cite{yks-mag}.
Thus, in particaular,
the tangent bundle $TM$ with its usual Lie algebroid structure
$([~,~], id)$ and deformed Lie algebroid structure $([~,~]_N , id_N)$ are compatible.

\medskip

(iii) (compatible Poisson structures)

Two Poisson structures $\pi$ and $\pi'$ on a manifold $M$ are compatible if and only if the cotangent Lie algebroid structures $(T^*M)_\pi$ and $(T^*M)_{\pi'}$ are
compatible. If $d_{\pi}$ and $d_{\pi'}$ are the corresponding Lie algebroid differentials, then 
$$d_{\pi} = [\pi, \_] ~\text{~~ and ~~}~ d_{\pi'} = [\pi', \_] .$$ 
Thus, the result follows from the following observation that 
\begin{align*}
 [d_{\pi} , d_{\pi'}] = d_{\pi} \circ d_{\pi'} + d_{\pi'} \circ d_{\pi} 
= [ \pi , [\pi' , \_]] + [ \pi' , [\pi , \_]] 
= [[\pi , \pi'], \_],
\end{align*}
where the last equality follows from the graded Jacobi identity of the Schouten bracket of multivector fields.

As for example, if $(\pi, N)$ is a Poisson-Nijenhuis structure on $M$ with hierarchy of compatible Poisson structures $\pi_k = N^k \pi$, for $k \geq 0$, then $(T^*M)_{\pi_k}$
and $(T^*M)_{\pi_l}$ are compatible Lie algebroid structures on $T^*M$.

\medskip

(iv) (compatible Jacobi structures)

A {\sf Jacobi structure} on a manifold $M$ consists of a pair $(\pi, E)$ of a bivector field $\pi$ and a vector field $E$ on $M$ satisfying 
\begin{equation}\label{jac-manifold}
[\pi, \pi] = 2 E \wedge \pi, \quad [E, \pi] = 0.
\end{equation}
Note that (\ref{jac-manifold}) is equivalent to the following condition
\begin{equation}\label{jac-alt-defn}
 [(\pi, E), (\pi, E)]^{(0,1)} = 0,
\end{equation}
where $(\pi, E) \in \Gamma (\largewedge^2TM) \times \Gamma (TM) = \Gamma (\largewedge^2(TM \times \mathbb{R}))$ is considered as a $2$-multisection of the Lie algebroid
$TM \times \mathbb{R} \rightarrow M$ and $[~, ~]^{(0,1)}$ is the Gerstenhaber bracket on the multisections of the Lie algebroid $TM \times \mathbb{R} \rightarrow M$
deformed by the $1$-cocycle $(0,1) \in \Gamma(T^*M) \times C^\infty(M) = \Gamma(T^*M \times \mathbb{R})$ \cite{igl-mar}. Given a Jacobi structure $(\pi, E)$ on $M$,
the $1$-jet bundle $T^*M \times \mathbb{R} \rightarrow M$ carries a Lie algebroid structure whose bracket and anchor are given by
$$ [(\alpha, f),(\beta, g)]_{(\pi,E)} := \mathcal{L}_{(\pi,E)^\sharp(\alpha, f)}^{(0,1)} (\beta, g) - \mathcal{L}_{(\pi,E)^\sharp(\beta, g)}^{(0,1)} (\alpha, f) - d^{(0,1)}\big((\pi,E)((\alpha, f),(\beta, g))\big),$$
$$\rho_{(\pi,E)} (\alpha, f) := \pi^\sharp(\alpha) + f E, $$
for all $(\alpha, f), (\beta, g) \in \Gamma(T^*M) \times C^\infty(M) = \Gamma(T^*M \times \mathbb{R})$ \cite{igl-mar}. 
Here $d^{(0,1)}$ and $\mathcal{L}^{(0,1)}$ denotes the $(0,1)$-twisted differential and Lie derivative of the Lie algebroid $TM \times \mathbb{R}$.
The differential
$ d_{(\pi,E)}$ 
%: \Gamma(\wedge^\bullet(TM \times \mathbb{R})) \rightarrow  \Gamma(\wedge^{\bullet +1}(TM \times \mathbb{R}))$$
of the Lie algebroid $(T^*M \times \mathbb{R}, [~,~]_{(\pi,E)}, \rho_{(\pi,E)})$ is given by $$d_{(\pi,E)} (P,Q) = [(\pi,E), (P,Q)]^{(0,1)}, $$
for $(P,Q) \in \Gamma(\largewedge^\bullet TM ) \times \Gamma(\largewedge^{\bullet -1} TM ) = \Gamma(\largewedge^\bullet(TM \times \mathbb{R}))$. 

Two Jacobi structures
$(\pi_1, E_1)$ and $(\pi_2,E_2)$ are said to be {\sf compatible} if $(\pi_1 + \pi_2, E_1 +E_2)$ is a Jacobi structure on $M$ \cite{costa}. Thus, by using (\ref{jac-alt-defn}), compatibility
is being equivalent to the condition
$$[(\pi_1,E_1), (\pi_2, E_2)]^{(0,1)} = 0 .$$
Two Jacobi structures on $M$ are compatible if and only if their corresponding Lie algebroid structures on the $1$-jet bundle $T^*M \times \mathbb{R} \rightarrow M$ are compatible. This follows from the observation that
\begin{align*}
&d_{(\pi_1,E_1)} \circ d_{(\pi_2,E_2)} + d_{(\pi_2,E_2)} \circ d_{(\pi_1,E_1)}\\
=~& [(\pi_1,E_1), [(\pi_2, E_2),  \_]^{(0,1)}]^{(0,1)} + [(\pi_2,E_2), [(\pi_1, E_1),  \_]^{(0,1)}]^{(0,1)} \\
=~& [[(\pi_1,E_1), (\pi_2, E_2)]^{(0,1)}, \_]^{(0,1)} = 0.
\end{align*}
\end{exam}

\medskip

\begin{defn}
 Let $A$ be a Lie algebroid over $M$. Then two Lie bialgebroid structures on $A$ are said to be compatible if the corresponding Lie algebroid structures on $A^*$
are compatible.
\end{defn}

\begin{remark}\label{comp-lie-bialgbd-rem}
%\begin{itemize}
(i) In terms of degree one differential, two Lie bialgebroid structures $(A, \delta)$ and $(A, \delta')$ on a Lie algebroid $A$ are compatible if and only if $[\delta, \delta']= 0$.
This follows from Remark \ref{lie-bialgebroid-diff} and Theorem \ref{comp-equiv}.

(ii) If $(A, \delta)$ and $(A, \delta')$ are two compatible Lie bialgebroid structures on a Lie algebroid $A$, then the differential $\delta + \delta'$ also defines a
Lie bialgebroid structure on $A$.
%\end{itemize}
\end{remark}

\begin{exam}
 If $\pi$ and $\pi'$ are two compatible Poisson structures on $M$, then $(TM, (T^*M)_\pi)$ and $(TM, (T^*M)_{\pi'})$ are two compatible Lie bialgebroid structures
on $TM$. 

As for example, if $(\pi, N)$ is a Poisson-Nijenhuis structure on $M$ with hierarchy of compatible Poisson structures 
 $\pi_k = N^k \pi$, for $k \geq 0$, then $(TM , (T^*M)_{\pi_k})$
and $(TM, (T^*M)_{\pi_l})$ are compatible Lie bialgebroid structures on $TM$.
\end{exam}

%\begin{prop}
% Let $M$ be a smooth manifold and $\pi$, $\pi'$ be two Poisson structures on $M$. Then $\pi$ and $\pi'$ are compatible as Poisson structures if and only if the Lie bialgebroids
%$(TM, (T^*M)_{\pi})$ and $(TM, (T^*M)_{\pi'})$ are compatible.
%\end{prop}
%\begin{proof}
 
%\end{proof}

%\begin{prop}
% Let $M$ be a smooth manifold and $\pi_k$, $k \geq 0$, be a family of Poisson structures on $M$. Then these are pairwise compatible Poisson structures
%on $M$, if and only if $(TM, (T^*M)_{\pi_k})$, $k \geq 0$, is a family of compatible Lie bialgebroids.
%\end{prop}

\section{Poisson-Nijenhuis groupoids}\label{sec5}
In this section, we define multiplicative Poisson-Nijenhuis structures on a Lie groupoid which extends the notion of symplectic-Nijenhuis groupoids introduced by Sti$\acute{\text{e}}$non and Xu \cite{stienon-xu}.
% We also study some of its basic properties.
\begin{defn}\label{pn-grpd}
 A {\sf Poisson-Nijenhuis groupoid} is a Lie groupoid $G \rightrightarrows M$ together with 
a Poisson-Nijenhuis structure $(\pi, N)$ on $G$ such that $(G \rightrightarrows M, \pi)$ forms a Poisson groupoid and $(G \rightrightarrows M, N)$ a Nijenhuis groupoid.
\end{defn}
Thus, a Poisson-Nijenhuis groupoid is a Poisson groupoid $(G \rightrightarrows M, \pi)$ together with a multiplicative Nijenhuis tensor $N : TG \rightarrow TG$
such that $(G, \pi, N)$ is a Poisson-Nijenhuis manifold.
A Poisson-Nijenhuis groupoid as above is denoted by the triple $(G \rightrightarrows M, \pi, N).$

\begin{exam}\label{pn-grpd-exam}
% \begin{itemize}
(i) Any Poisson groupoid is a Poisson-Nijenhuis groupoid with $N = id$.

\medskip

(ii) (symplectic-Nijenhuis groupoids)

A {\sf symplectic-Nijenhuis groupoid} is a symplectic groupoid $(G \rightrightarrows M , \omega)$
equipped with a multiplicative Nijenhuis tensor $N : TG \rightarrow TG$ such that $(G, \omega , N)$
is a symplectic-Nijenhuis manifold \cite{stienon-xu}. Thus, a symplectic-Nijenhuis groupoid is a
Poisson-Nijenhuis groupoid whose Poisson structure is non-degenerate.
 
\medskip

(iii) (pair groupoid of Poisson-Nijenhuis manifolds)

Let $(M, \pi, N)$ be a Poisson-Nijenhuis manifold. Consider the pair groupoid $M \times M \rightrightarrows M$ with the Poisson-Nijenhuis structure
$(\pi \ominus \pi, N \oplus N)$ on $M \times M$. Using the structure of pair groupoid, it is straightforward to check that both the Poisson and the Nijenhuis
tensors are multiplicative. Thus, the pair groupoid $(M \times M \rightrightarrows M, \pi \ominus \pi, N \oplus N)$ is a Poisson-Nijenhuis groupoid.

\medskip

(iv) (complemented Poisson groupoids)

 Let $(G \rightrightarrows M, \pi)$ be a Poisson groupoid and $\omega \in \Omega^2(G)$ be a multiplicative
$2$-form on $G$ (by a multiplicative $2$-form on a Lie groupoid $G \rightrightarrows M$, we mean a $2$-form $\omega \in \Omega^2(G)$ such that the induced map
$\omega^\sharp : TG \rightarrow T^*G$ is a Lie groupoid morphism from the tangent Lie groupoid $TG \rightrightarrows TM$ to the cotangent Lie groupoid $T^*G \rightrightarrows A^*$). If $\omega$ satisfies
$$ \iota_{\pi^\sharp \alpha} d\omega = 0, ~~ \text{for all }  \alpha \in \Omega^1(G)$$
(in particaular, if $d\omega = 0$), then the triple $(G \rightrightarrows M, \pi,  N = \pi^\sharp \circ \omega^\sharp)$ forms a Poisson-Nijenhuis groupoid (see Example \ref{pn-manifold-exam} (ii)).

\medskip

(v) (holomorphic Poisson groupoids) 

A {\sf holomorphic Lie groupoid} is a (smooth) Lie groupoid $G \rightrightarrows M$, where both $G$ and $M$ are 
complex manifolds and all the structure maps of the groupoid are holomorphic. If $G \rightrightarrows M$ is a holomorphic Lie groupoid with associated almost complex
structures $J : TG \rightarrow TG$ and $J_M : TM \rightarrow TM$, then $(J, J_M)$ is a multiplicative Nijenhuis tensor on the underlying real smooth Lie groupoid
$G \rightrightarrows M.$

A {\sf holomorphic Poisson groupoid} is a holomorphic Lie groupoid $G \rightrightarrows M$ equipped with a holomorphic Poisson structure
 $\pi = \pi_R + i \pi_I \in \Gamma(\largewedge^2 T^{1,0}G)$ such that the graph of the groupoid multiplication $\text{Gr}(m) \subset G \times G \times \overline{G}$ is 
a coisotropic submanifold, where $\overline{G}$ is the manifold $G$ with the opposite Poisson structure. In this case, it turns out that $(G \rightrightarrows M, \pi_R)$
and $(G \rightrightarrows M, \pi_I)$ are both Poisson groupoids \cite{jotz-stienon-xu}. Therefore, it follows from Example \ref{pn-manifold-exam} (iii) that $(G \rightrightarrows M, \pi_I, J)$ forms a Poisson-Nijenhuis groupoid.
\end{exam}

\begin{remark}
If $(G \rightrightarrows M, \pi, N)$ is a Poisson-Nijenhuis groupoid, then for each $k \geq 0$, the Poisson bivectors $ \pi_k = N^k \pi$ are multiplicative. Therefore,
 $(G \rightrightarrows M, \pi_k = N^k \pi)$ forms a Poisson groupoid, for each $k \geq 0$. These Poisson groupoid structures on
$ G \rightrightarrows M$ are compatible in the sense that the Poisson structures $\{\pi_k = N^k \pi\}_{k \geq 0}$ on $G$ are compatible.
\end{remark}

\begin{remark}\label{pn-grpd-comp-liebialgbd}
 Let $(G \rightrightarrows M, \pi, N)$ be a Poisson-Nijenhuis groupoid with Lie algebroid $A$. Since $\{ \pi_k = N^k \pi \}_{k \geq 0}$ are compatible multiplicative Poisson
structures on $G$, it follows from the universal lifting theorem \cite{pont-geng-xu} that
$$ [\delta_{\pi_k}, \delta_{\pi_l}] = \delta_{[\pi_k, \pi_l]} = 0, ~~ \text{ for } k , l \geq 0.$$
Therefore, these compatible Poisson structures on $G \rightrightarrows M$ induces compatible Lie bialgebroid structures on $A$.
\end{remark}

\begin{prop}\label{pn-grpd-base-pn}
 Let $(G \rightrightarrows M, \pi, N)$ be a Poisson-Nijenhuis groupoid. Then
\begin{itemize}
 \item[(i)] the groupoid inversion map $i : G \rightarrow G,~ g \mapsto g^{-1}$ is an anti-P-N map;
 \item[(ii)] the base manifold $M$ carries a unique
Poisson-Nijenhuis structure such that $s$ is a P-N map and $t$ is an anti-P-N map.
\end{itemize}
\end{prop}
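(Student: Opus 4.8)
The plan is to establish (i) first and then bootstrap (ii) from it together with the standard facts about Poisson groupoids. For (i), recall from the theory of Poisson groupoids \cite{wein, mac-xu2} that the inversion $i$ of a Poisson groupoid is an anti-Poisson diffeomorphism, that is, $i_*\pi=-\pi$. For the Nijenhuis part, note that the inversion of the tangent Lie groupoid $TG\rightrightarrows TM$ is precisely the tangent map $i_*=Ti$; since $(N,N_M)$ is a Lie groupoid morphism from $TG\rightrightarrows TM$ to itself and every Lie groupoid morphism intertwines inversions, we get $N\circ i_*=i_*\circ N$, i.e.\ $i$ commutes with the Nijenhuis tensors. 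Hence $i$ is an anti-P-N map.

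For (ii) I would first construct the Poisson-Nijenhuis data on $M$. The Poisson structure is the usual one attached to a Poisson groupoid: there is a unique Poisson structure $\pi_M$ on $M$ with $s_*\pi=\pi_M$, and then $t=s\circ i$ combined with (i) forces $t_*\pi=-\pi_M$ \cite{wein, mac-xu2}. For the $(1,1)$-tensor, multiplicativity of $N$ means that $N$ restricts on the unit space to $N_M$, so $\epsilon(M)\cong M$ is an invariant submanifold of $G$ with respect to $N$; hence by the remarks in Section~\ref{sec2} the restriction $N_M$ is a Nijenhuis tensor on $M$. From the Lie groupoid morphism property we already have $s_*\circ N=N_M\circ s_*$ and $t_*\circ N=N_M\circ t_*$ (dualizing the first of these on $1$-forms gives $N^*\circ s^*=s^*\circ N_M^*$). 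As $s$ is a surjective submersion, the relation $s_*\circ N=N_M\circ s_*$ determines $N_M$ uniquely, so together with the uniqueness of $\pi_M$ the Poisson-Nijenhuis structure on $M$ is unique.

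It remains to verify that $(\pi_M,N_M)$ satisfies the two Poisson-Nijenhuis axioms. For $N_M\circ\pi_M^\sharp=\pi_M^\sharp\circ N_M^*$, I would evaluate the identity $N\circ\pi^\sharp=\pi^\sharp\circ N^*$ on $G$ at a covector of the form $s^*\alpha$, rewrite $N^*(s^*\alpha)=s^*(N_M^*\alpha)$, and apply $s_*$, using $s_*\circ N=N_M\circ s_*$ together with the fact that $s$ is a Poisson map ($s_*\circ\pi^\sharp\circ s^*=\pi_M^\sharp$). Once this holds, $N_M\pi_M$ is a genuine bivector and $(N\pi)^\sharp=N\circ\pi^\sharp$ pushes forward under $s$ to $(N_M\pi_M)^\sharp$, so $s\colon(G,N\pi)\to(M,N_M\pi_M)$ is a Poisson map as well. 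Now for a Poisson submersion $\phi$ the pullback $\phi^*$ on $1$-forms intertwines the Koszul brackets, since $\pi^\sharp(\phi^*\gamma)$ is $\phi$-related to $\pi_M^\sharp\gamma$ and Lie derivatives commute with pullback along $\phi$-related vector fields; applying this along $s$ for both $\pi$ and $N\pi$, and using $N^*\circ s^*=s^*\circ N_M^*$, a term-by-term computation gives $s^*\big(C(\pi_M,N_M)(\alpha,\beta)\big)=C(\pi,N)(s^*\alpha,s^*\beta)=0$ for all $\alpha,\beta\in\Omega^1(M)$. Since $s$ is surjective, $s^*$ is injective on $1$-forms, so $C(\pi_M,N_M)\equiv 0$, and $(M,\pi_M,N_M)$ is Poisson-Nijenhuis. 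Finally $t=s\circ i$ is the composite of the anti-P-N map $i$ with the P-N map $s$, hence anti-P-N.

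The step I expect to require the most care is the descent of the Magri--Morosi concomitant to the base: one must check that $s^*$ intertwines the Koszul brackets associated to $\pi_M$ and to $N_M\pi_M$. This is a bookkeeping argument that hinges on $s$ being a Poisson map for both the original bivector $\pi$ and the deformed bivector $N\pi$, and on the elementary identity $\mathcal{L}_X\phi^*\omega=\phi^*\mathcal{L}_Y\omega$ for $\phi$-related vector fields $X,Y$; the remaining parts of the argument are routine diagram-chasing through the definitions.
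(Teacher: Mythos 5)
Your proposal is correct and follows essentially the same route as the paper: part (i) via the anti-Poisson property of inversion plus the fact that the groupoid morphism $N$ intertwines tangent-groupoid inversions (the paper just spells out the computation $N(X_g)\bullet N(X_g^{-1})=\epsilon_*(s_*(N(X_g)))$ rather than quoting the general fact), and part (ii) via $\pi_M^\sharp=s_*\circ\pi^\sharp\circ s^*$, $s_*\circ N=N_M\circ s_*$, and the descent identity $s^*\bigl(C(\pi_M,N_M)(\alpha,\beta)\bigr)=C(\pi,N)(s^*\alpha,s^*\beta)=0$ using injectivity of $s^*$. Your extra justification that $s^*$ intertwines the Koszul brackets of $\pi$ and $N\pi$ is exactly the ``easy observation'' the paper leaves implicit.
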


\begin{proof}
(i) Since $(G \rightrightarrows M, \pi)$ is a Poisson groupoid, the groupoid inversion $i : G \rightarrow G$ is an anti-Poisson map (Theorem (4.2.3), \cite{wein}).
Moreover, since $N : TG \rightarrow TG$ is a groupoid morphism from the tangent Lie groupoid to itself, we have
%$$N(X_g \bullet Y_h) = N(X_g) \bullet N(Y_h) ,$$
%for all $X_g \in T_gG, ~ Y_h \in T_hG$ such that $t_* (X_g) = s_* (Y_h)$. Thus for any $X_g \in T_gG,$ we have
$$ N(X_g) \bullet N(X_g^{-1}) = N (X_g \bullet X_g^{-1}) = N (\epsilon_* ({s_* (X_g)})) = \epsilon_* ({N_M (s_* (X_g))}) = \epsilon_* ({s_* (N(X_g))}) ~ \text{ and}$$
$$ N(X_g^{-1}) \bullet N(X_g) = N (X_g^{-1} \bullet X_g) =  N ( \epsilon_* ({t_* (X_g)})) = \epsilon_* ({N_M (t_* (X_g))}) = \epsilon_* ({t_* (N(X_g))}) ,$$
for all $X_g \in T_gG,$ $g \in G.$ Here $\epsilon : M \rightarrow G$ denotes the unit map of the Lie groupoid $G \rightrightarrows M$. Thus, it follows that
%$N (X_m) = N (X_m \cdot X_m) = N(X_m) \cdot N(X_m)$, for $m \in M$ and $X_m \in T_mM$, which implies that
%$N(X_m) = X_m$. Thus we have
%$$ X_m = N(X_m) = N (X_g \cdot X_g^{-1}) = N(X_g) \cdot N (X_g^{-1})$$
%which shows that 
$N (X_g^{-1}) = (N (X_g))^{-1}$, which is equivalent to
$ N \circ i_* = i_* \circ N.$
Therefore, the inversion map $i$ is an anti-P-N map.
\medskip

(ii) Since $(G \rightrightarrows M, \pi)$ is a Poisson groupoid, the base $M$ carries a Poisson structure
$\pi_M$ such that $s$ is a Poisson map and $t$ is an anti-Poisson map (Theorem (4.2.3), \cite{wein}). Moreover, since $N : TG \rightarrow TG$
is a multiplicative Nijenhuis tensor, $N_M : TM \rightarrow TM$ is a Nijenhuis tensor on $M$. Thus, it remains
show that, $\pi_M$ and $N_M$ are compatible in the sense of Poisson-Nijenhuis structure.

Note that $\pi_M = s_* \pi$ implies that $\pi_M^\sharp = s_* \circ \pi^\sharp \circ s^*$. Moreover, since $N$ is a multiplicative
Nijenhuis tensor, we have $s_* \circ N = N_M \circ s_*$. Therefore,
\begin{align*}
 N_M \circ \pi_M^\sharp = N_M \circ s_* \circ \pi^\sharp \circ s^* =& ~ s_* \circ N \circ \pi^\sharp \circ s^* \\
=& ~s_* \circ \pi^\sharp \circ N^* \circ s^* \qquad (\text{since } \pi \text{ and } N  \text{ are compatible})\\
=& ~s_* \circ \pi^\sharp \circ (s_* \circ N)^* \\
=& ~s_* \circ \pi^\sharp \circ s^* \circ N_M^* = \pi_M^\sharp \circ N_M^* .
\end{align*}
Hence, $N_M \circ \pi_M^{\sharp}$ defines a bivector field $N_M \pi_M$ on $M$ and we have
$(N_M \pi_M)^\sharp = s_* \circ (N \pi)^\sharp \circ s^*$. This shows that $s_* (N \pi) = N_M \pi_M$.
%This turns out to be a Poisson bivector field on $M$ as it is the bivector field on $M$ induced from the Poisson groupoid $(G \rightrightarrows M, N \pi)$.
% $(G \rightrightarrows M, \pi)$ and $(G \rightrightarrows M, N \pi)$ are both Poisson groupoids with $\pi_M$
 %and $N_M \pi_M$ being the induced Poisson structures on $M$, respectively, we have
Thus, it follows from an easy observation that for any $\alpha , \beta \in \Omega^1(M)$,
% with any extended $1$-forms $\tilde{\alpha}, \tilde{\beta} \in \Omega^1(G)$, we have
$$ [s^* \alpha, s^* \beta]_{\pi} = s^* ( [\alpha, \beta]_{\pi_M} )  ~~ \text{ and }~~ [s^* \alpha, s^* \beta]_{N \pi} = s^* ( [\alpha, \beta]_{N_M \pi_M}) .$$
Therefore,
%Moreover for any $\alpha , \beta \in \Omega^1(M)$ with extension of $1$-forms $\widetilde{\alpha}, \widetilde{\beta} \in \Omega^1(G)$, we have
\begin{align*}
0 &= C (\pi, N) (s^* \alpha, s^* \beta) \\
&= [s^* \alpha, s^* \beta]_{N \pi} - \big( [N^* s^* \alpha, s^*\beta]_\pi + [s^* \alpha, N^* s^* \beta]_\pi - N^* [s^* \alpha, s^* \beta]_\pi   \big) \\
&= s^* ([\alpha , \beta]_{N_M \pi_M}) - \big( [ s^* N_M^* \alpha, s^*\beta]_\pi + [s^* \alpha, s^* N_M^* \beta]_\pi - N^* s^* [\alpha, \beta]_{\pi_M}   \big)\\
&= s^*   ([\alpha, \beta]_{N_M \pi_M} )
 - s^* \big(  [N_M^* \alpha, \beta]_{\pi_M}    + [\alpha, N_M^*\beta]_{\pi_M}    - N_M^* [\alpha, \beta]_{\pi_M}  \big) \\
%=&~ \bigg[   [\widetilde{\alpha}, \widetilde{\beta}]_{N \pi}  -   \big(  [N^* \widetilde{\alpha}, \widetilde{\beta}]_{\pi}    + [\widetilde{\alpha}, N^*\widetilde{\beta}]_{\pi}    - N^* [\widetilde{\alpha}, \widetilde{\beta}]_{\pi}  \big) \bigg]\bigg|_M \\
%=&~ C(\pi, N)(\widetilde{\alpha}, \widetilde{\beta}) \bigg|_M .
&= s^* \big( C(\pi_M , N_M) (\alpha, \beta) \big) .
\end{align*}
Since $s$ is a surjective submersion, it follows that $C(\pi_M , N_M) (\alpha, \beta) = 0.$
Thus, $(\pi_M, N_M)$ defines a Poisson-Nijenhuis structure on $M$.
\end{proof}

\begin{remark}
For a Poisson-Nijenhuis groupoid $(G \rightrightarrows M, \pi, N)$, the Poisson structure on $M$ induced from the Poisson groupoid $(G \rightrightarrows M, N^k \pi)$ is given by
$N_M^k \pi_M.$
\end{remark}

\begin{prop}
 Let $(G \rightrightarrows M, \pi, N)$ be a Poisson-Nijenhuis groupoid. If the orbit space $M/\sim$ is
a smooth manifold, then $ M/\sim $ carries a Poisson-Nijenhuis structure such that the projection
$q : M \rightarrow M/\sim$ is a P-N map.
\end{prop}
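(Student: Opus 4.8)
The plan is to push the Poisson-Nijenhuis structure on $M$ forward to $M/\sim$ along the quotient map. By Proposition \ref{pn-grpd-base-pn}, $M$ carries a Poisson-Nijenhuis structure $(\pi_M, N_M)$ for which $s$ is a P-N map and $t$ an anti-P-N map; and since $M/\sim$ is a manifold, $q : M \to M/\sim$ is a surjective submersion whose kernel pair is the orbit relation
$$R(q) = \{(x,y)\in M\times M \mid x\sim y\} = \{(s(g),t(g)) \mid g\in G\} = (s,t)(G),$$
the image of $(s,t) : G \to M\times M$; moreover $q\circ s = q\circ t$.

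For the Nijenhuis tensor I first appeal to Proposition \ref{coinduced-nij-tensor}: a unique $(1,1)$-tensor $\widetilde{N}$ on $M/\sim$ with $\widetilde{N}\circ q_* = q_*\circ N_M$ exists as soon as $R(q)$ is an invariant submanifold of $M\times M$ with respect to $N_M\oplus N_M$. Now $R(q) = M\times_{M/\sim}M$ is an embedded submanifold, and a short check — using that $s$ is a submersion and that the restriction of $t$ to an $s$-fibre is a surjective submersion onto the corresponding orbit — shows that $(s,t)$ maps $G$ submersively onto $R(q)$, so that $TR(q) = (s,t)_*(TG)$. Since a Poisson-Nijenhuis groupoid is in particular a Nijenhuis groupoid, the map $(s,t)$ commutes with the Nijenhuis tensors $N$ on $G$ and $N_M\oplus N_M$ on $M\times M$ (Section \ref{sec3}), so Proposition \ref{image-nij-submnfld} applied with $\phi = (s,t)$ and $S_1 = G$ (trivially $N$-invariant) gives that $R(q)$ is $N_M\oplus N_M$-invariant. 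Hence $\widetilde{N}$ exists; that it is a Nijenhuis tensor follows by pushing the vanishing torsion of $N_M$ forward along the submersion $q$: for $q$-projectable vector fields $X,Y$ one has $q_*[X,Y] = [q_*X,q_*Y]$ and $q_*\circ N_M = \widetilde{N}\circ q_*$, whence $\tau_{\widetilde{N}}(q_*X,q_*Y) = q_*\tau_{N_M}(X,Y) = 0$, and such fields span $T(M/\sim)$ pointwise.

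For the Poisson part, every $q^*f$ with $f\in C^\infty(M/\sim)$ is constant along orbits, so $s^*q^*f = t^*q^*f =: \widetilde{f}$ in $C^\infty(G)$; using that $s$ is Poisson and $t$ anti-Poisson,
$$\{q^*f,q^*g\}_{\pi_M}\circ s = \{\widetilde{f},\widetilde{g}\}_\pi, \qquad \{q^*f,q^*g\}_{\pi_M}\circ t = -\{\widetilde{f},\widetilde{g}\}_\pi,$$
and pulling back along the unit section $\epsilon : M\to G$ (for which $s\circ\epsilon = t\circ\epsilon = \mathrm{id}_M$) gives $\{q^*f,q^*g\}_{\pi_M} = -\{q^*f,q^*g\}_{\pi_M}$, so $\{q^*f,q^*g\}_{\pi_M}\equiv 0$. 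Thus $q_*\pi_M$ is the zero bivector on $M/\sim$ and $q$ is a Poisson map; the pair $(\pi_{M/\sim}=0,\ \widetilde{N})$ is then trivially a Poisson-Nijenhuis structure on $M/\sim$, and $q$ is a P-N map.

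The main obstacle is producing $\widetilde{N}$, that is, verifying that $R(q)$ is $N_M\oplus N_M$-invariant; the crux is the identification $R(q) = (s,t)(G)$ together with the fact that $(s,t)$ is a submersion onto $R(q)$, which is exactly what lets Proposition \ref{image-nij-submnfld} apply. Everything else is either a formal consequence of $q$ being a surjective submersion or the elementary computation above.
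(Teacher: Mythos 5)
Your proof is correct, and half of it is the paper's own argument: for the Nijenhuis tensor, the paper likewise identifies $R(q)$ with $(s,t)(G)$, uses the invariance of $G$ under $N$ together with $(s,t)_*\circ N=(N_M\oplus N_M)\circ (s,t)_*$ and Proposition \ref{image-nij-submnfld} to conclude that $R(q)$ is $N_M\oplus N_M$-invariant, and then applies Proposition \ref{coinduced-nij-tensor}; your explicit verification that the coinduced tensor has vanishing Nijenhuis torsion (via $q$-projectable vector fields) is a detail the paper leaves implicit, and your justification that $TR(q)=(s,t)_*(TG)$ is if anything more careful than the paper's. Where you genuinely diverge is the Poisson half. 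The paper cites Weinstein for the existence of a Poisson structure $\underline{\pi}$ on $M/\!\sim$ making $q$ Poisson, and then settles the Magri--Morosi compatibility of $\underline{\pi}$ and $\underline{N}$ by pushing $C(\pi_M,N_M)=0$ forward along the surjective submersion $q$, exactly as in the last part of the proof of Proposition \ref{pn-grpd-base-pn}(ii). You instead compute the pushforward directly and find $\underline{\pi}=0$; your $s$/$t$/$\epsilon$ argument is valid, and the conclusion agrees with the general fact that $\operatorname{im}\pi_M^\sharp$ lies in $\operatorname{im}\rho$, i.e.\ is tangent to the orbits, so basic functions Poisson-commute. This buys you a trivial compatibility check (and exposes that the real content of the proposition is the descent of the Nijenhuis tensor), whereas the paper's route says nothing explicit about $\underline{\pi}$ but treats the compatibility uniformly with the base-manifold case and would go through unchanged even if the quotient Poisson structure were nonzero.
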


\begin{proof}
 Since $(G \rightrightarrows M, \pi)$ is a Poisson groupoid, the orbit space $M/\sim$ carries a Poisson structure $\underline{\pi}$ such that $q : M \rightarrow M/\sim$
is a Poisson map (Corollary (4.2.9), \cite{wein}). For the projection map $q$, we have
\begin{align*}
R (q) =&~ \{ (x, y) \in M \times M | ~ q(x) = q(y) \} \\
=& ~\{ (s(g), t (g))| ~ g \in G \} \\
=&~ (s,t)(G).
\end{align*}
Consider the map $(s,t) : G \rightarrow M \times M$. Note that, $G$ is an invariant submanifold of $G$ with respect to the tensor $N : TG \rightarrow TG$, and we have
$ (s,t)_* \circ N = (N_M \oplus N_M) \circ (s,t)_*.$
Therefore,
by Proposition \ref{image-nij-submnfld}, it follows that $(s,t)(G) \subset M \times M$ is an invariant submanifold with respect to the tensor
$N_M \oplus N_M$. Hence by
Proposition \ref{coinduced-nij-tensor}, there exists a Nijenhuis tensor $\underline{N}$ on $M / \sim$ such that $q$ commute with the Nijenhuis tensors, that is, $q_* \circ N_M = \underline{N} \circ q_*$. Thus, it remains to show the compatibility of $\underline{\pi}$ and
$\underline{N}$ in the sense of Poisson-Nijenhuis structure. This follows from the argument similar to the last part of Proposition \ref{pn-grpd-base-pn}(ii).
\end{proof}

\section{P-N Lie bialgebroids}\label{sec6}
In this section, we introduce a special class of Lie bialgebroid, which we call P-N Lie bialgebroid and study some of its basic properties.
\begin{defn}\label{inf-mul-pn-lie-bialgbd}
 Let $A$ be a Lie algebroid over $M$. Suppose its dual bundle $A^*$ also carries a Lie algebroid structure with
$\pi_A$ being the induced linear Poisson structure on $A$. Then the pair $(A, A^*)$ is said to have a
{\sf P-N Lie bialgebroid} structure if
\begin{itemize}
 \item[(i)] the pair $(A, A^*)$ is a Lie bialgebroid over $M$,
%the bundle map $\pi_A^\sharp : T^*A \rightarrow TA$ defines a Lie algebroid morphism from the cotangent Lie algebroid
%$T^*A \rightarrow A^*$ to the tangent Lie algebroid $TA \rightarrow TM$,
%\[
%\xymatrix{
%T^*A \ar[r]^{\pi_A^\sharp} \ar[d] & TA \ar[d] \\
%A^*  \ar[r]_{\rho_*} & TM
%}
%\]
 \item[(ii)] there exists an infinitesimal multiplicative Nijenhuis tensor $N_A : TA \rightarrow TA$ on $A$
such that $N_A^* : T^*A \rightarrow T^*A$ is a Lie algebroid morphism from the cotangent Lie algebroid $T^*A \rightarrow A^*$ to itself,
%\[
%\xymatrix{
%TA \ar[r]^{N_A} \ar[d] & TA \ar[d] \\
%TM  \ar[r]_{N_M} & TM
%}
%\]
 \item[(iii)] $(\pi_A, N_A)$ is a Poisson-Nijenhuis structure on $A$.
\end{itemize}
%such that $(A, \pi_A, N_A)$ is a Poisson-Nijenhuis structure on $A$. 
\end{defn}
A P-N Lie bialgebroid over $M$ as above may be denoted by $(A, A^*, N_A).$
Note that the Poisson structure $\pi_A$ and the Nijenhuis tensor $N_A$ satisfy the following compatibility
conditions in the sense of Poisson-Nijenhuis structure, that is,
\begin{itemize}
 \item $N_A \circ \pi_A^\sharp =  \pi_A^\sharp \circ N_A^*$ \quad (that is, $N_A \circ \pi_A^\sharp$ defines a bivector field $N_A \pi_A$ on $A$),
 \item $C (\pi_A , N_A) \equiv 0,$
\end{itemize}
where
$$C(\pi_A, N_A) (\alpha, \beta) := [\alpha, \beta]_{N_A \pi_A} - ([N_A^*\alpha, \beta]_{\pi_A} + [\alpha, N_A^*\beta]_{\pi_A} - N_A^* [\alpha, \beta]_{\pi_A}),
\text{ for all } \alpha, \beta \in \Omega^1 (A).$$

\begin{exam}\label{exam-pn-lie-bialgbd}
%\begin{itemize}
(i) Any Lie bialgebroid $(A, A^*)$  can be considered as a P-N Lie bialgebroid  with $N_A = id$.

\medskip

(ii) (Poisson-Nijenhuis manifolds)

Let $(M, \pi, N)$ be a Poisson-Nijenhuis manifold. Then $(TM, (T^*M)_\pi)$ is a Lie bialgebroid, as $\pi$ being a Poisson structure.
Let $\widetilde{\pi}$ denote the dual linear Poisson structure on $TM$ induced from the cotangent Lie algebroid $(T^*M)_\pi$. Then $\widetilde{\pi} = \pi^c$
is the complete lift of $\pi$ to $TM$ \cite{gra-urb}. Moreover, the Nijenhuis tensor $N : TM \rightarrow TM$ induces an infinitesimal
multiplicative Nijenhuis tensor $\widetilde{N}= N_*: T(TM) \rightarrow T(TM)$ on the tangent Lie algebroid
\[
\xymatrixrowsep{0.4in}
\xymatrixcolsep{0.6in}
\xymatrix{
T(TM) \ar[r]^{N_*} \ar[d] & T(TM) \ar[d] \\
TM  \ar[r]_{N} & TM.
}
\]
%which is locally same as $N$ in the first factor and identity of the second factor. The tensor $\widetilde{N}$ is a Nijenhuis tensor (as $N$ is so) and is a Lie
%algebroid morphism over $N : TM \rightarrow TM$.
The compatibility of $\widetilde{\pi}$ and $\widetilde{N}$ in the sense of Poisson-Nijenhuis structure follows from the compatibility of $\pi$ and $N$ \cite{gra-urb}.
Therefore, $(TM, (T^*M)_\pi, N_*)$ is a P-N Lie bialgebroid over $M$.

\medskip

(iii) (holomorphic Lie bialgebroids)

A {\sf holomorphic Lie algebroid} is a holomorphic vector bundle $A \rightarrow M$ whose sheaf of holomorphic sections $\mathcal{A}$ is a sheaf of complex Lie algebras
and there exists a holomorphic bundle map $\rho : A \rightarrow TM$, called the anchor, such that
\begin{itemize}
 \item $\rho$ induces a homomorphism of sheaves of complex Lie algebras from $\mathcal{A}$ to $\Theta_M$, where $\Theta_M$ is the sheaf of holomorphic vector fields on $M$;
 \item the Leibniz identity
$$ [X, fY] = f [X, Y] + (\rho(X)f ) Y$$
holds, for any open subset $U \subset M$ and $X, Y \in \mathcal{A}(U), f \in \mathcal{O}_M(U)$, where $\mathcal{O}_M$ is the sheaf of holomorphic functions on $M$ \cite{geng-stie-xu1,geng-stie-xu2}. 
\end{itemize}
A holomorphic Lie algebroid can equivalently described by a real Lie algebroid $A \rightarrow M$ which is also a holomorphic vector bundle such that the map
\[
\xymatrixrowsep{0.4in}
\xymatrixcolsep{0.6in}
\xymatrix{
TA \ar[r]^{J_A} \ar[d] & TA \ar[d] \\
TM  \ar[r]_{J_M} & TM
}
\]
defines a Lie algebroid morphism from the tangent Lie algebroid $TA \rightarrow TM$ to itself, where $J_A$ and $J_M$ denote the almost complex structures on $A$ and $M$, respectively \cite{geng-stie-xu1,geng-stie-xu2}.
A holomorphic Lie algebroid structure on $A$ also gives rise to a fibrewise linear holomorphic Poisson structure on $A^*_{\mathbb{C}}.$

A {\sf holomorphic Lie bialgebroid} is a pair of holomorphic Lie algebroids $(A, A^*)$ in duality over a base $M$ such that for any open subset $U \subset M$, the following
compatibility condition
$$d_* [X, Y] = [d_* X , Y ] + [X, d_* Y]$$
holds, for all $X, Y \in \mathcal{A}(U)$. Here $[~,~]$ denotes the sheaf of Gerstenhaber bracket on $(\mathcal{A}^\bullet, \wedge)$ induced from the holomorphic
Lie algebroid $A$ and $d_* : \mathcal{A}^k \rightarrow \mathcal{A}^{k+1}$ is the complex of sheaves over $M$ induced from the holomorphic Lie algebroid
$A^*$.

 It is known that if $(A, A^*)$ is a holomorphic Lie bialgebroid, then the underlying real Lie algebroids $(A, A^*)$ in duality forms a Lie bialgebroid
\cite{jotz-stienon-xu}.
Thus, it follows that if $(A, A^*)$ is a holomorphic Lie bialgebroid over $M$, then $(A, A^*, J_A)$ is a P-N Lie bialgebroid over $M$ (cf. Example \ref{pn-manifold-exam}(iii)).
\end{exam}

\begin{prop}\label{pn-lie-bialgbd-base-pn}
 Let $(A, A^*, N_A)$ be a P-N Lie bialgebroid over $M$. Then the base manifold $M$
carries a natural Poisson-Nijenhuis structure.
\end{prop}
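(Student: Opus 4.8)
The plan is to produce the Poisson--Nijenhuis structure on $M$ by restricting the structure on the total space $A$ to its zero section $M \hookrightarrow A$, exactly as the base Poisson structure of a Lie bialgebroid is recovered. First I would recall that for a Lie bialgebroid $(A, A^*)$ the base $M$ carries a canonical Poisson structure $\pi_M$, characterized by the fact that the anchor $\rho_* : A^* \to TM$ of $A^*$ equals $\pi_M^\sharp \circ (\text{restriction of })$; equivalently, $\pi_M$ is the Poisson structure on $M$ such that the zero section $M \hookrightarrow A$ is a coisotropic (in fact Poisson, after projecting) submanifold for the linear Poisson structure $\pi_A$. More concretely, since $\pi_A$ is a linear Poisson structure on $A$, it restricts along the zero section: the bundle projection $q : A \to M$ is a Poisson map onto $(M, \pi_M)$, and $\pi_M = q_* \pi_A$. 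I would state this as the known input from \cite{mac-xu}.

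Next I would handle the Nijenhuis tensor. By hypothesis (ii) of Definition \ref{inf-mul-pn-lie-bialgbd}, $N_A$ is an infinitesimal multiplicative $(1,1)$-tensor, so it has a well-defined base map $N_M : TM \to TM$, namely the restriction of $N_A$ to the zero section $TM = (TA)|_M$. Because $(\pi_A, N_A)$ is a Poisson--Nijenhuis structure on $A$ (hypothesis (iii)), $N_A$ is in particular a Nijenhuis tensor on $A$; I would argue $N_M$ is then a Nijenhuis tensor on $M$, either by the general fact that an infinitesimal multiplicative Nijenhuis tensor has Nijenhuis base map, or directly: the zero section $M$ is an invariant submanifold of $A$ with respect to $N_A$ (this is immediate from $N_A$ preserving the core/zero section of the double vector bundle $TA$), and the restriction of a Nijenhuis tensor to an invariant submanifold is Nijenhuis (stated in Section \ref{sec2}).

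It then remains to check that $\pi_M$ and $N_M$ are compatible in the Poisson--Nijenhuis sense, i.e.\ $N_M \circ \pi_M^\sharp = \pi_M^\sharp \circ N_M^*$ and $C(\pi_M, N_M) \equiv 0$. This is where the argument parallels the proof of Proposition \ref{pn-grpd-base-pn}(ii): the projection $q : A \to M$ is a surjective submersion which is a Poisson map ($q_* \pi_A = \pi_M$) and which intertwines the Nijenhuis tensors, $q_* \circ N_A = N_M \circ q_*$ — the latter because $N_A$ being infinitesimal multiplicative forces it to cover $N_M$ along $q$, just as in the groupoid case $s$ covers $N_M$. Given these two facts, pulling back $1$-forms along $q$ gives $[q^*\alpha, q^*\beta]_{\pi_A} = q^*[\alpha,\beta]_{\pi_M}$ and $[q^*\alpha, q^*\beta]_{N_A \pi_A} = q^*[\alpha,\beta]_{N_M \pi_M}$, and then $q^*\big(C(\pi_M, N_M)(\alpha,\beta)\big) = C(\pi_A, N_A)(q^*\alpha, q^*\beta) = 0$; since $q$ is a surjective submersion, $q^*$ is injective on forms, so $C(\pi_M, N_M) \equiv 0$. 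The non-degeneracy/coincidence condition $N_M \circ \pi_M^\sharp = \pi_M^\sharp \circ N_M^*$ follows the same way from $N_A \circ \pi_A^\sharp = \pi_A^\sharp \circ N_A^*$ together with $\pi_M^\sharp = q_* \circ \pi_A^\sharp \circ q^*$ and $q_* \circ N_A = N_M \circ q_*$.

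The main obstacle I anticipate is justifying the intertwining relation $q_* \circ N_A = N_M \circ q_*$ cleanly — i.e.\ that an infinitesimal multiplicative $(1,1)$-tensor really descends along the bundle projection $q : A \to M$ to its base tensor $N_M$. This should follow from unwinding the definition: $N_A$ is a Lie algebroid morphism $TA \to TA$ over $N_M : TM \to TM$ for the tangent Lie algebroid structure, and $q$ (on $A$) together with the identity on $M$ is compatible with the anchor projections, so the square commutes; alternatively one invokes that $N_A$ restricted to the zero section is $N_M$ and $N_A$ sends fibers of $q$ linearly. Once this compatibility is in hand the rest is a routine transcription of Proposition \ref{pn-grpd-base-pn}(ii) with $(G \rightrightarrows M, s)$ replaced by $(A, q)$, which I would not spell out in full.
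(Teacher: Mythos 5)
There is a genuine gap at the very first step, and it propagates through the whole compatibility argument. You take as known input that the bundle projection $q : A \to M$ is a Poisson map from $(A, \pi_A)$ onto $(M, \pi_M)$, i.e.\ $\pi_M = q_* \pi_A$. This is false: $\pi_A$ is a \emph{linear} Poisson structure (dual to the Lie algebroid structure on $A^*$), so the bracket of any two pullback functions vanishes, $\{f \circ q, g \circ q\}_{\pi_A} = 0$, and hence $q_* \pi_A = 0$, not $\pi_M$. (Already for $A = TM$ over a Poisson manifold $(M,\pi)$, the linear Poisson structure is the complete lift $\pi^c$ and the projection pushes it to zero.) The correct statement from \cite{mac-xu} is $\pi_M^\sharp = \rho_* \circ \rho^*$, which is obtained by restricting $\pi_A^\sharp \circ R$ along the inclusion $A^* \hookrightarrow T^*A^*$, i.e.\ by restriction to the zero section, not by pushforward along $q$. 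Consequently the identities your argument rests on, $\pi_M^\sharp = q_* \circ \pi_A^\sharp \circ q^*$ and $[q^*\alpha, q^*\beta]_{\pi_A} = q^*[\alpha,\beta]_{\pi_M}$, both fail, and the transcription of Proposition \ref{pn-grpd-base-pn}(ii) with $(G,s)$ replaced by $(A,q)$ does not go through: the source map of a Poisson groupoid is a Poisson submersion onto the base Poisson structure, but the bundle projection of the Lie algebroid is not its infinitesimal analogue in this respect.

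The Nijenhuis half of your argument is fine: $Tq \circ N_A = N_M \circ Tq$ is exactly the commutative square defining an infinitesimal multiplicative $(1,1)$-tensor, the zero section is invariant under $N_A$, and $N_M$ is Nijenhuis. But the compatibility of $(\pi_M, N_M)$ has to be extracted by restriction to the zero section, as the paper does: one checks $N_M \circ \pi_M^\sharp = \pi_M^\sharp \circ N_M^*$ by restricting $N_A \circ \pi_A^\sharp = \pi_A^\sharp \circ N_A^*$ along $A^* \xrightarrow{R} T^*A$ (where $R$ is the identity on $A^*$), and one kills $C(\pi_M, N_M)$ by extending each $\alpha \in \Omega^1(M)$ to the $1$-form $\widetilde{\alpha}(X) = (\alpha(u), X, 0)$ on $A$ and using $C(\pi_A, N_A)(\widetilde{\alpha}, \widetilde{\beta}) = 0$ together with the fact that $\pi_A$ and $N_A$ restrict to $\pi_M$ and $N_M$. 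Your pushforward-along-$q$ route cannot produce either of these.
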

\begin{proof}
Since the pair $(A, A^*)$ is a Lie bialgebroid over $M$,
the base $M$ carries a Poisson structure $\pi_M$ given by $\pi_M^\sharp = \rho_* \circ \rho^*$
\cite{mac-xu}. Here $\rho$ and $\rho_*$ being the anchor map of the Lie algebroid $A$ and $A^*$, respectively. Moreover, from condition (ii) of Definition \ref{inf-mul-pn-lie-bialgbd}, there exists a Nijenhuis tensor $N_M$ on $M$.

For any $\alpha \in T^*M$, we have

$$ N_M \circ \pi_M^\sharp (\alpha) = N_M \circ \rho_* (\rho^* \alpha) = (N_A \circ \pi_A^\sharp \circ R)\big|_0 (\rho^* \alpha)
= (\pi_A^\sharp \circ N_A^* \circ R)\big|_0 (\rho^* \alpha).$$
\[
\xymatrixrowsep{0.4in}
\xymatrixcolsep{0.6in}
\xymatrix{
                     & T^*A^* \ar[r]^{R} \ar[d] & T^*A \ar[r]^{\pi_A^\sharp} & TA \ar[d] \ar[r]^{N_A} &    TA \ar[d] \\
T^*M \ar[r]_{\rho^*} & A^*  \ar[rr]_{\rho_*}      &                            & TM \ar[r]_{N_M}        &    TM .
}
\]
Since $R : T^*A^* \rightarrow T^*A$ is the identity map when restricted to $A^* \hookrightarrow T^*A^*$ to $A^* \hookrightarrow T^*A$, we have
\begin{align*}
 N_M \circ \pi_M^\sharp (\alpha) = (\pi_A^\sharp \circ N_A^*)\big|_0 (\rho^* \alpha) &= (\pi_A^\sharp)\big|_0 \circ \rho^* (N_M ^* \alpha) \\
&= \rho_* \circ \rho^* (N_M ^* \alpha) = \pi_M^\sharp \circ N_M^* (\alpha).
\end{align*}
The Magri-Morosi concominant of $\pi_M$ and $N_M$ vanishes and it follows from the observation that any $1$-form $\alpha \in \Omega^1(M)$ can be considered as a $1$-form
$\widetilde{\alpha} \in \Omega^1(A)$ by
$$ \widetilde{\alpha} (X) = (\alpha (u), X, 0), \text{ for } X \in A_u, u \in M.$$
Since $(\pi_A, N_A)$ defines a Poisson-Nijenhuis structure on $A$, we have
$C(\pi_A, N_A) (\widetilde{\alpha}, \widetilde{\beta}) = 0$, for all $\alpha, \beta \in \Omega^1(M)$. As the restriction of $\pi_A$ and $N_A$ are respectively
$\pi_M$ and $N_M$, we have $C(\pi_M, N_M)(\alpha, \beta) = 0.$
\end{proof}

\begin{remark}
 Let $(M, \pi, N)$ be a Poisson-Nijenhuis manifold and consider the P-N Lie bialgebroid $(TM, (T^*M)_\pi, N_*)$ over $M$ (cf. Example \ref{exam-pn-lie-bialgbd}(ii)). The induced
Poisson structure on $M$ is given by $\pi$ and the induced Nijenhuis tensor is $N$. Therefore, the induced Poisson-Nijenhuis structure on $M$ coincides with the given one.
\end{remark}

\begin{prop}\label{pn-lie-bialgbd-comp-bialgbd}
Let $(A, A^*, N_A)$ be a  P-N Lie bialgebroid over $M$. Then there is a hierarchy of compatible Lie bialgebroid structures on $A$. 
\end{prop}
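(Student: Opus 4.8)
The plan is to carry the Poisson hierarchy of the Poisson--Nijenhuis manifold $(A,\pi_A,N_A)$ down to a family of Lie algebroid structures on $A^*$, then to check that each of them is part of a Lie bialgebroid and that the family is pairwise compatible. Since $(\pi_A,N_A)$ is a Poisson--Nijenhuis structure on the manifold $A$, the theory of Poisson--Nijenhuis manifolds recalled in Section \ref{sec2} gives a hierarchy of Poisson bivectors $\pi_{A,k}:=N_A^k\pi_A$ on $A$, with $\pi_{A,k}^\sharp=N_A^k\circ\pi_A^\sharp=\pi_A^\sharp\circ(N_A^*)^k$ and $\pi_{A,0}=\pi_A$, which are pairwise compatible, $[\pi_{A,k},\pi_{A,l}]=0$ for all $k,l\ge 0$.

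Next I would verify that each $\pi_{A,k}$ is again a fibrewise linear Poisson structure on $A\to M$. Linearity of $\pi_A$ amounts to saying that $\pi_A^\sharp:T^*A\to TA$ is a morphism of the relevant double vector bundles, covering $\mathrm{id}_A$ on the core side and the anchor $\rho_*:A^*\to TM$ of $A^*$ on the other; and $N_A$, being an infinitesimal multiplicative $(1,1)$-tensor, is a double vector bundle morphism $TA\to TA$ covering $\mathrm{id}_A$ and $N_M:TM\to TM$. Hence $\pi_{A,k}^\sharp=N_A^k\circ\pi_A^\sharp$ is again such a morphism, now covering $N_M^k\circ\rho_*$, which is exactly the linearity of $\pi_{A,k}$. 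By the classical correspondence between fibrewise linear Poisson structures on $A$ and Lie algebroid structures on $A^*$, each $\pi_{A,k}$ then determines a Lie algebroid structure $A^*_{(k)}$ on $A^*$, with induced linear Poisson structure $\pi_{A,k}$ and anchor $N_M^k\circ\rho_*$; here $A^*_{(0)}=A^*$.

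Then each pair $(A,A^*_{(k)})$ is a Lie bialgebroid, by the characterization of Lie bialgebroids recalled above (Theorem 6.2 \cite{mac-xu}). Indeed, that characterization says the hypothesis that $(A,A^*)$ is a Lie bialgebroid is equivalent to $\pi_A^\sharp\circ R:T^*A^*\to TA$ being a Lie algebroid morphism from the cotangent Lie algebroid of $A^*$ --- which is induced from the fixed Lie algebroid structure on $A$ alone, hence is the same for every $k$ --- to the tangent Lie algebroid $TA\to TM$. Since $N_A:TA\to TA$ is a Lie algebroid morphism of tangent Lie algebroids by condition (ii) of Definition \ref{inf-mul-pn-lie-bialgbd}, so is $N_A^k$, and therefore $\pi_{A,k}^\sharp\circ R=N_A^k\circ\pi_A^\sharp\circ R$ is again a Lie algebroid morphism with the same domain $T^*A^*$. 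Applying the characterization in the reverse direction to the Lie algebroid $A^*_{(k)}$ --- whose induced linear Poisson structure on $A$ is $\pi_{A,k}$ and whose anchor $N_M^k\circ\rho_*$ makes the square commute --- gives that $(A,A^*_{(k)})$ is a Lie bialgebroid.

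Finally, the family $\{(A,A^*_{(k)})\}_{k\ge 0}$ is pairwise compatible: two Lie bialgebroid structures on $A$ are compatible precisely when the corresponding Lie algebroid structures on $A^*$ are, and by Theorem \ref{comp-equiv} the Lie algebroid structures $A^*_{(k)}$ and $A^*_{(l)}$ on the bundle $A^*$ are compatible precisely when their dual linear Poisson structures $\pi_{A,k}$ and $\pi_{A,l}$ on $A$ are, which is the relation $[\pi_{A,k},\pi_{A,l}]=0$ obtained in the first step. This yields the asserted hierarchy $(A,A^*_{(0)}),(A,A^*_{(1)}),(A,A^*_{(2)}),\dots$ of pairwise compatible Lie bialgebroid structures on $A$. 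I expect the second step to be the point demanding the most care --- confirming that $N_A$ preserves fibrewise linearity, so that each $\pi_{A,k}$ genuinely defines a Lie algebroid on $A^*$, and matching up the double vector bundle base maps with the Mackenzie--Xu diagram; the remaining steps are essentially an assembly of results already in hand.
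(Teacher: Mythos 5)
Your proposal is correct and follows essentially the same route as the paper: pass to the hierarchy of pairwise compatible Poisson bivectors $N_A^k\pi_A$ on the manifold $A$, check that each is fibrewise linear because $N_A$ is a (double vector bundle / Lie algebroid) morphism, and then invoke Theorem \ref{comp-equiv} to translate $[N_A^k\pi_A,N_A^l\pi_A]=0$ into compatibility of the induced Lie algebroid structures on $A^*$. Your explicit verification via the Mackenzie--Xu characterization that each pair $(A,A^*_{(k)})$ is itself a Lie bialgebroid is a step the paper leaves implicit, and it is a worthwhile addition rather than a divergence.
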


\begin{proof}
 Let $\pi_A$ being the linear Poisson structure on $A$ induced from the Lie algebroid $A^*$. Since $(A, \pi_A, N_A)$ forms a Poisson-Nijenhuis
manifold, there exists a hierarchy of compatible Poisson structures $N_A^k\pi_A$ on $A$, for $k \geq 0$. That is,
$$ [N_A^k \pi_A , N_A^k \pi_A] = 0  \quad \text{ ~~ and ~~ } \quad [N_A^k \pi_A , N_A^l \pi_A] = 0,$$
for $k, l \geq 0$. Note that the Lie algebroid morphism
\[
\xymatrixrowsep{0.4in}
\xymatrixcolsep{0.6in}
\xymatrix{
T^*A \oplus_A T^*A  \ar[d]  \ar[r]^{(id, N_A^*)}     & T^*A \oplus_A T^*A \ar[r]^{\pi_A} \ar[d]     & \mathbb{R}  \ar[d]\\
A^* \oplus_M A^* \ar[r]                              & A^* \oplus_M A^*  \ar[r]                     & {\ast}                  
}
\]
is given by $N_A \pi_A$. Thus, it follows that $N_A \pi_A$ defines a linear\footnote{Let $A \rightarrow M$ be a vector bundle. A $k$-vector field
$\Pi \in \Gamma (\largewedge^k TA)$ is called {\sf linear} if $\Pi$ takes $k$ fibrewise linear functions on $A$ to a fibrewise linear function on $A$. Equivalently,
if the map $\overline{\Pi} : \oplus_{A}^k  T^*A \rightarrow \mathbb{R},$ $\overline{\Pi} (\gamma_1, \ldots, \gamma_k) = \Pi (\gamma_1, \ldots, \gamma_k)$
is a vector bundle morphism from $\oplus_{A}^k  T^*A \rightarrow \oplus_M^k A^*$ to $\mathbb{R}$ \cite{burs-cab}.} bivector field on $A$. Similarly, for each $k \geq 0$,
the Poisson bivectors $N_A^k \pi_A$ on $A$ are linear.
Therefore, by Theorem \ref{comp-equiv}, they define compatible Lie algebroid structures on $A^*$.
%Moreover since these
%Poisson structures being compatible, therefore by the Theorem \ref{comp-equiv}, the corresponding Lie algebroid structures on $A^*$ are also compatible. 
Thus,
by the Remark \ref{comp-lie-bialgbd-rem}(i), there is a hierarchy of compatible Lie bialgebroid structures on $A$.
\end{proof}
\section{Infinitesimal form of Poisson-Nijenhuis groupoids}\label{sec7}

In this section, we show that the infinitesimal version of Poisson-Nijenhuis grouopids are P-N Lie bialgebroids.

\begin{prop}\label{pn-grpd-pn-lie-bialgbd}
 Let $(G \rightrightarrows M, \pi, N)$ be a Poisson-Nijenhuis groupoid with Lie algebroid $A$. Then
$(A, A^*, N_A)$ is a P-N Lie bialgebroid over $M$.
\end{prop}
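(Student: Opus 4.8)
The plan is to verify, in turn, the three defining conditions (i), (ii), (iii) of Definition~\ref{inf-mul-pn-lie-bialgbd}, transferring each from the groupoid $G$ to the algebroid $A$ by applying the Lie functor and using the double vector bundle identifications $j_G\colon TA\to Lie(TG)$ and $j_G'\colon Lie(T^{*}G)\to T^{*}A$ recalled in Section~\ref{sec2}.

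Condition (i) is immediate: since $(G\rightrightarrows M,\pi)$ is a Poisson groupoid, $(A,A^{*})$ is a Lie bialgebroid by Mackenzie--Xu \cite{mac-xu}. For condition (ii), Theorem~\ref{mul-inf-mul} already gives that the associated $(1,1)$-tensor $N_A$ is an infinitesimal multiplicative Nijenhuis tensor on $A$, so only the assertion about $N_A^{*}$ remains. By Remark~\ref{lie-gp-multiplicative-tensor}(i), $N^{*}\colon T^{*}G\to T^{*}G$ is a Lie groupoid morphism of the cotangent groupoid $T^{*}G\rightrightarrows A^{*}$; applying the Lie functor, $Lie(N^{*})$ is a morphism of the Lie algebroid $Lie(T^{*}G\rightrightarrows A^{*})$, which $j_G'$ identifies with the cotangent Lie algebroid $T^{*}A\to A^{*}$. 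Since $N^{*}$ is fibrewise dual to $N$, the relation $j_G'=j_G^{*}\circ i_G$ and the identity $N_A=j_G^{-1}\circ Lie(N)\circ j_G$ (which follows from $N_A=(\mathbb{T}N)_A$ and $\mathbb{T}N=\sigma^{-1}\circ TN\circ\sigma$) yield, after a short diagram chase, $N_A^{*}=j_G'\circ Lie(N^{*})\circ(j_G')^{-1}$; hence $N_A^{*}$ is a Lie algebroid morphism of $T^{*}A\to A^{*}$.

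For condition (iii) I must show that $(\pi_A,N_A)$ is a Poisson--Nijenhuis structure on $A$; $\pi_A$ is Poisson and $N_A$ is Nijenhuis, so two compatibility conditions remain. The first, $N_A\circ\pi_A^{\sharp}=\pi_A^{\sharp}\circ N_A^{*}$, is pure functoriality: apply the Lie functor to the groupoid identity $N\circ\pi^{\sharp}=\pi^{\sharp}\circ N^{*}$ (valid since $(G,\pi,N)$ is Poisson--Nijenhuis) and conjugate by $j_G,j_G'$, using $\pi_A^{\sharp}=j_G^{-1}\circ Lie(\pi^{\sharp})\circ(j_G')^{-1}$ together with the two identities of the previous step. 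In particular $N_A\pi_A$ is a bivector field on $A$, and the same computation applied to $(N\pi)^{\sharp}=N\circ\pi^{\sharp}$ identifies it with $(N\pi)_A$, the linear Poisson structure of the Poisson groupoid $(G\rightrightarrows M,N\pi)$; hence $N_A\pi_A$ is Poisson, and by the universal lifting theorem (Remark~\ref{pn-grpd-comp-liebialgbd}) together with Theorem~\ref{comp-equiv} it is compatible with $\pi_A$.

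The remaining, and main, point is $C(\pi_A,N_A)\equiv 0$. Conceptually this identity is the equality of Lie algebroid structures $(T^{*}A)_{N_A\pi_A}=\big((T^{*}A)_{\pi_A}\big)_{N_A^{*}}$ on $T^{*}A\to A$, which should follow by applying the Lie functor to the corresponding equality $(T^{*}G)_{N\pi}=\big((T^{*}G)_{\pi}\big)_{N^{*}}$ of cotangent groupoids --- and that equality is exactly the statement $C(\pi,N)\equiv 0$, given the first compatibility above. To make this rigorous I would argue directly: $C(\pi_A,N_A)$ is a skew-symmetric $C^{\infty}(A)$-bilinear $(2,1)$-tensor on $A$, hence it suffices to evaluate it on a family of $1$-forms generating $\Omega^{1}(A)$ over $C^{\infty}(A)$, for instance the differentials of fibrewise-linear functions $\ell_{\xi}$ ($\xi\in\Gamma A^{*}$) and the differentials of pullbacks of functions on $M$. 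On such forms the brackets $[~,~]_{\pi_A}$ and $[~,~]_{N_A\pi_A}$ and the operator $N_A^{*}$ are computed from the linear Poisson structures $\pi_A$, $N_A\pi_A=(N\pi)_A$ and from $N_A=j_G^{-1}\circ Lie(N)\circ j_G$, and translating through the identifications $TA\cong Lie(TG)$, $T^{*}A\cong Lie(T^{*}G)$ reduces the required vanishing to $C(\pi,N)\equiv 0$ on $G$, which holds because $(G,\pi,N)$ is a Poisson--Nijenhuis groupoid. I expect the main obstacle to be precisely this bookkeeping: keeping track of the several vector bundle structures on $TA$ and $T^{*}A$ and their groupoid counterparts, and verifying that passing to the infinitesimal intertwines the deformation of the cotangent Lie algebroid by the multiplicative $(1,1)$-tensor $N^{*}$. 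Once $C(\pi_A,N_A)\equiv 0$ is established, $(\pi_A,N_A)$ is a Poisson--Nijenhuis structure on $A$, and therefore $(A,A^{*},N_A)$ is a P-N Lie bialgebroid over $M$.
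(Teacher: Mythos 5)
Your proposal follows the paper's strategy almost verbatim for conditions (i) and (ii) and for the first compatibility in (iii): the Lie bialgebroid comes from Mackenzie--Xu, $N_A$ from Theorem~\ref{mul-inf-mul}, the identity $N_A^{*}=j_G'\circ Lie(N^{*})\circ j_G^{'-1}$ from $j_G'=\sigma^{*}\circ i_G$, and $N_A\circ\pi_A^{\sharp}=\pi_A^{\sharp}\circ N_A^{*}$ by applying the Lie functor to $N\circ\pi^{\sharp}=\pi^{\sharp}\circ N^{*}$ (the paper phrases this as computing the linear Poisson structure $\overline{\pi}_A$ of the Poisson groupoid $(G\rightrightarrows M,N\pi)$ in two ways, which is the same computation). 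The one place where you diverge is the step you yourself flag as the main obstacle, $C(\pi_A,N_A)\equiv 0$: you propose a direct verification on a generating family of $1$-forms on $A$ (differentials of fibrewise-linear functions and of pullbacks), with the attendant bookkeeping across the several bundle structures on $TA$ and $T^{*}A$. The paper avoids that bookkeeping with a single structural observation: since $\pi^{\sharp}$ and $N^{*}$ are groupoid morphisms, the Magri--Morosi concomitant $C(\pi,N)$ is itself a Lie groupoid morphism from $T^{*}G\oplus_G T^{*}G\rightrightarrows A^{*}\oplus_M A^{*}$ to $T^{*}G\rightrightarrows A^{*}$, and $C(\pi_A,N_A)=j_G'\circ Lie\big(C(\pi,N)\big)\circ(j_G'\oplus j_G')^{-1}$, so the vanishing of $C(\pi,N)$ transfers in one stroke. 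Your route would also work (and is the kind of computation one must do anyway to justify the paper's formula for $Lie(C(\pi,N))$), but the paper's version is cleaner and reuses the same ``apply the Lie functor to a multiplicative object'' principle that drives the rest of the proof; it is also exactly the mechanism reused in the converse direction in Theorem~\ref{1-1,pn-grpd-pn-lie-bialgbd}, where source-simple-connectedness of $T^{*}G\oplus_G T^{*}G$ lets one go back from $C(\pi_A,N_A)=0$ to $C(\pi,N)=0$.
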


\begin{proof}
 Since $(G \rightrightarrows M, \pi)$ is a Poisson groupoid with Lie algebroid $A$, the dual bundle $A^*$ also carries a Lie algebroid structure
and the pair $(A , A^*)$ forms a Lie bialgebroid \cite{mac-xu}. Let $\pi_A$ be the linear Poisson structure on $A$ induced from the Lie algebroid structure on $A^*$.
Then we have $$ \pi_A^\sharp = j_G^{-1} \circ Lie (\pi^\sharp) \circ j_G^{' -1} : T^*A \rightarrow TA.$$
Moreover, $N : TG \rightarrow TG$
is a multiplicative Nijenhuis tensor on $G$ implies that $ N_A := (\sigma^{-1} \circ TN \circ \sigma)|_A : TA \rightarrow TA $ is an infinitesimal multiplicative Nijenhuis tensor on $A$.
Therefore,
\begin{align*}
 N_A^* &= (\sigma^{-1} \circ TN \circ \sigma)^*\\
&= \sigma^* \circ Lie (N)^* \circ (\sigma^{-1})^* \\
&= j_G^{'} \circ i_G^{-1} \circ i_G \circ Lie (N^*) \circ i_G^{-1} \circ i_G \circ j_G^{' -1} \qquad (\text{since } j_G^{'} = \sigma^* \circ i_G)\\
&= j_G^{'} \circ Lie (N^*) \circ j_G^{' -1},
\end{align*}
where we have used the fact that $Lie (N)^* = i_G \circ Lie (N^*) \circ i_G^{-1}.$

Next, consider the Poisson groupoid $(G \rightrightarrows M, N\pi)$. Then the bundle $A^*$ carries a different Lie algebroid structure 
induced from the Poisson groupoid $(G \rightrightarrows M, N\pi)$. Let $\overline{\pi}_A$
be the linear Poisson structure on $A$ induced from this new Lie algebroid structure on $A^*$. We have
\begin{align*}
 \overline{\pi}_A^\sharp &= j_G^{-1} \circ Lie (N \circ \pi^\sharp) \circ j_G^{' -1} \\
&= j_G^{-1} \circ Lie(N) \circ j_G \circ j_G^{-1} \circ Lie (\pi^\sharp) \circ j_G^{' -1} = N_A \circ \pi_A^\sharp.
\end{align*}
On the other hand, since $N \circ \pi^\sharp = \pi^\sharp \circ N^*$, we also have
\begin{align*}
 \overline{\pi}_A^\sharp &= j_G^{-1} \circ Lie ( \pi^\sharp \circ N^*) \circ j_G^{' -1}\\
&= j_G^{-1} \circ Lie (\pi^\sharp) \circ j_G^{' -1} \circ j_G^{'} \circ Lie (N^*) \circ j_G^{' -1} = \pi_A^\sharp \circ N_A^*.
\end{align*}
Therefore, $N_A \circ \pi_A^\sharp = \pi_A^\sharp \circ N_A^*$ and hence, $N_A \circ \pi_A^\sharp$ defines a bivector field $N_A \pi_A$ on $A$.

To prove that the Magri-Morosi concominant of $\pi_A$ and $N_A$ vanishes, we need the following observation. Given a Poisson-Nijenhuis groupoid $(G \rightrightarrows M, \pi, N),$
the unit space $M \hookrightarrow G$ is a coisotropic-invariant submanifold of $G$ in the sense that $\pi^\sharp (TM)^0 \subset TM$ and $N (TM) \subset TM$.
Thus, it follows from an easy observation that $C(\pi, N) : \Gamma (T^*G) \times \Gamma(T^*G) \rightarrow \Gamma(T^*G)$ restricts to a map
from $\Gamma (TM)^0 \times \Gamma(TM)^0$ into $\Gamma(TM)^0$.
%Therefore, $C(\pi, N)$ restricts to a map $\Gamma(A^*) \times \Gamma(A^*) \rightarrow \Gamma(A^*)$.

Consider the Lie groupoid $T^*G \oplus_G T^*G \rightrightarrows A^* \oplus_M A^*$, which is a subgroupoid of the direct product Lie groupoid
$T^*G \times T^*G \rightrightarrows A^* \times A^*$. The Lie algebroid of the groupoid $T^*G \oplus_G T^*G \rightrightarrows A^* \oplus_M A^*$ is isomorphic to
the Lie algebroid $T^*A \oplus_A T^*A \rightarrow A^* \oplus_M A^*$. Note that the Magri-Morosi concominant $C(\pi, N)$ is a Lie groupoid morphism from 
$T^*G \oplus_G T^*G \rightrightarrows A^* \oplus_M A^*$ to $T^*G \rightrightarrows A^*$, and the Magri-Morosi concominant $C(\pi_A, N_A)$ is then given by
$$ C(\pi_A, N_A) = j_G^{'} \circ Lie (C(\pi, N)) \circ (j_G^{'} \oplus j_G^{'})^{-1} : T^*A \oplus_A T^*A \rightarrow T^*A.$$
%The corresponding Lie algebroid morphism $Lie (C(\pi,N))$ is given by $C(\pi, N_A)$.
Since $C(\pi, N) \equiv 0$, it follows that $C(\pi_A, N_A) \equiv 0.$
\end{proof}

\begin{exam}

(i) If the Nijenhuis tensor $N = id$, then it follows that $N_A = id.$ Therefore, the infinitesimal form of Poisson groupoids are Lie bialgebroids.

\medskip

(ii) Let $(G \rightrightarrows M, \omega, N)$ be a symplectic-Nijenhuis groupoid. Since $(G \rightrightarrows M , \omega)$ is a symplectic groupoid, the base $M$
carries a Poisson structure $\pi_M$ such that the Lie algebroid $A$ of the Lie groupoid is isomorphic to the cotangent Lie algebroid $(T^*M)_{\pi_M}$ \cite{mac-xu2}.
The dual Lie algebroid $A^*$ is the usual tangent bundle Lie algebroid $TM$. 
Therefore, the dual linear Poisson structure on $A = T^*M$ is given by the canonical symplectic structure $\pi_{T^*M} = (\omega_{can})^{-1}$.
Moreover, since $N$ is a multiplicative Nijenhuis tensor on $G$ with base $N_M$, it induces an infinitesimal multiplicative Nijenhuis tensor $N_{T^*M}$ on $A = (T^*M)_{\pi_M}$.
\[
\xymatrixrowsep{0.4in}
\xymatrixcolsep{0.6in}
\xymatrix{
T^*(T^*M)  \ar[r]^{\pi_{T^*M}} \ar[d]     &  T(T^*M) \ar[r]^{N_{T^*M}} \ar[d]     &  T(T^*M) \ar[d] \\
TM  \ar[r]                              & TM  \ar[r]                     & TM               
}
\]
Then the Lie algebroid structure on $A^* = TM$ induced from the linear Poisson structure $N_{T^*M} \pi_{T^*M}$ on $A=T^*M$ is given by $(TM)_{N_M}$. That is, the
anchor for this Lie algebroid is $N_M$ and the differential $d_{N_M} : \Gamma(\largewedge^\bullet T^*M) \rightarrow \Gamma(\largewedge^{\bullet +1} T^*M)$ is
$$ d_{N_M} = i_{N_M} \circ d - d \circ i_{N_M},$$
where $i_{N_M}$ denotes the derivation of degree zero defined by
$$ (i_{N_M} \alpha)(X_1, \ldots, X_k) = \sum_{i=1}^k \alpha (X_1, \ldots, N_M X_i, \ldots, X_k),$$
for $\alpha \in \Gamma(\largewedge^k T^*M)$, $X_1, \ldots, X_k \in \Gamma(TM)$ \cite{yks2}. 
This Lie algebroid structure $(TM)_{N_M}$ on $A^*$ is also given by the Poisson grouopid $(G \rightrightarrows M, N \omega^{-1})$. Therefore,
 $((T^*M)_{\pi_M}, d_{N_M})$ is a Lie bialgebroid. By
Proposition \ref{lie-bialgbd-pn}, this Lie bialgebroid is induced by the Poisson-Nijenhuis structure $(\pi_M, N_M)$.

\medskip

(iii) Let $(M , \pi, N)$ be a Poisson-Nijenhuis manifold and consider the Poisson-Nijenhuis groupoid
$(M \times M \rightrightarrows M, \pi \ominus \pi, N \oplus N)$ given by Example \ref{pn-grpd-exam}(iii). The Lie algebroid $A$ of the pair groupoid is the
usual tangent Lie algebroid $TM$ of $M$. The dual Lie algebroid $A^*$ is the cotangent Lie algebroid $(T^*M)_\pi$. Moreover, the multiplicative Nijenhuis tensor
$N \oplus N$ on the pair groupoid induces $\widetilde{N} = N_*$ as the infinitesimal multiplicative Nijenhuis tensor on the Lie algebroid $A = TM$. 
Therefore, the P-N Lie bialgebroid associated to the Poisson-Nijenhuis groupoid $(M \times M \rightrightarrows M, \pi \ominus \pi, N \oplus N )$ is given by
$(TM, (T^*M)_\pi, N_*)$ considered in Example \ref{exam-pn-lie-bialgbd}(ii).
\end{exam}

\begin{remark}
 Thus, for a given Poisson-Nijenhuis groupoid $(G \rightrightarrows M, \pi, N)$ with Lie algebroid $A$, there is a hierarchy of compatible Lie
bialgebroid structures on $A$. The hierarchy of Lie bialgebroid structures on $A$ induced from the Poisson-Nijenhuis groupoid $(G \rightrightarrows M, \pi, N)$
and from the P-N Lie bialgebroid $(A, A^*, N_A)$ are same (cf. Remark \ref{pn-grpd-comp-liebialgbd}, Proposition \ref{pn-lie-bialgbd-comp-bialgbd}).
\end{remark}

\begin{thm}\label{1-1,pn-grpd-pn-lie-bialgbd}
 Let $G \rightrightarrows M$ be a $s$-connected and $s$-simply connected Lie groupoid with Lie algebroid $A$. Then
there is a one-to-one correspondence between multiplicative Poisson-Nijenhuis structures on $G$ and P-N Lie bialgebroid structures on $A$.
\end{thm}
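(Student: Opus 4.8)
The plan is to assemble the correspondence from the pieces that have already been established, with the three conditions in Definition \ref{inf-mul-pn-lie-bialgbd} being matched to the three structural ingredients of a Poisson-Nijenhuis groupoid. First, recall that a multiplicative Poisson-Nijenhuis structure on $G$ consists of a multiplicative Poisson bivector $\pi$ together with a multiplicative Nijenhuis tensor $N$ satisfying the two Poisson-Nijenhuis compatibility conditions $N\circ\pi^\sharp = \pi^\sharp\circ N^*$ and $C(\pi,N)\equiv 0$. On the infinitesimal side, a P-N Lie bialgebroid structure on $A$ consists of a Lie bialgebroid $(A,A^*)$, an infinitesimal multiplicative Nijenhuis tensor $N_A$ whose dual is a morphism of the cotangent Lie algebroid $T^*A\to A^*$, and the requirement that $(\pi_A,N_A)$ be a Poisson-Nijenhuis structure on $A$. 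So the task is to set up a bijection at the level of these data.

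Going from $G$ to $A$ is precisely Proposition \ref{pn-grpd-pn-lie-bialgbd}: given $(G\rightrightarrows M,\pi,N)$, we obtain the Lie bialgebroid $(A,A^*)$ from the Poisson groupoid structure via \cite{mac-xu}, the infinitesimal multiplicative Nijenhuis tensor $N_A=(\sigma^{-1}\circ TN\circ\sigma)|_A$ from the multiplicative Nijenhuis tensor $N$ via Theorem \ref{mul-inf-mul}, and the Poisson-Nijenhuis compatibility of $(\pi_A,N_A)$ was verified there. For the reverse direction I would integrate each ingredient separately. Since $G$ is $s$-connected and $s$-simply connected, the Lie bialgebroid $(A,A^*)$ integrates to a unique multiplicative Poisson structure $\pi$ on $G$ making $(G\rightrightarrows M,\pi)$ a Poisson groupoid \cite{mac-xu2}, with $\pi_A^\sharp = j_G^{-1}\circ Lie(\pi^\sharp)\circ j_G^{'-1}$. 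Independently, Theorem \ref{mul-inf-mul} integrates the infinitesimal multiplicative Nijenhuis tensor $N_A$ to a unique multiplicative Nijenhuis tensor $N$ on $G$. It then remains to check that $(\pi,N)$ satisfies the two Poisson-Nijenhuis compatibility conditions on $G$, knowing that $(\pi_A,N_A)$ satisfies them on $A$.

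The main obstacle is exactly this last step: descending the Poisson-Nijenhuis compatibility from $G$ to $A$ is a restriction argument, but lifting it back from $A$ to $G$ requires that the compatibility conditions themselves be detected infinitesimally. The plan is to phrase both conditions as the vanishing of certain multiplicative tensors or the equality of certain multiplicative bundle maps on $G$, and then invoke uniqueness in the integration theorems. For the first condition, $N\circ\pi^\sharp$ and $\pi^\sharp\circ N^*$ are both multiplicative bundle maps $T^*G\to TG$; the computation in Proposition \ref{pn-grpd-pn-lie-bialgbd} shows their images under $j_G^{-1}\circ Lie(-)\circ j_G^{'-1}$ are $N_A\circ\pi_A^\sharp$ and $\pi_A^\sharp\circ N_A^*$ respectively, which coincide by hypothesis, so by the injectivity of the Lie functor on multiplicative structures (again using $s$-simple connectedness) the two agree on $G$, giving a multiplicative bivector field $N\pi$. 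For the second condition, one regards $C(\pi,N)$ as a multiplicative morphism $T^*G\oplus_G T^*G\rightrightarrows A^*\oplus_M A^*$ into $T^*G\rightrightarrows A^*$, whose infinitesimal is $C(\pi_A,N_A) = j_G^{'}\circ Lie(C(\pi,N))\circ(j_G^{'}\oplus j_G^{'})^{-1}$; since $C(\pi_A,N_A)\equiv 0$ and $G$ is $s$-connected and $s$-simply connected, the unique integration of the zero morphism forces $C(\pi,N)\equiv 0$.

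Finally, I would verify that the two assignments are mutually inverse. Starting from $(\pi,N)$ on $G$, differentiating gives $(A,A^*,N_A)$, and re-integrating recovers $(\pi,N)$ by the uniqueness clauses of \cite{mac-xu2} and Theorem \ref{mul-inf-mul}; conversely, starting from $(A,A^*,N_A)$, integrating and then differentiating returns the same data by the same uniqueness statements together with Proposition \ref{pn-grpd-pn-lie-bialgbd}. The one subtlety worth flagging is that the condition in Definition \ref{inf-mul-pn-lie-bialgbd}(ii) that $N_A^*$ be a morphism of the cotangent Lie algebroid $T^*A\to A^*$ must be shown to be automatic once $N$ is multiplicative, which is exactly the identity $N_A^* = j_G^{'}\circ Lie(N^*)\circ j_G^{'-1}$ established in the proof of Proposition \ref{pn-grpd-pn-lie-bialgbd} combined with Remark \ref{lie-gp-multiplicative-tensor}(i) that $N^*$ is a cotangent-groupoid morphism; conversely this same condition is what is needed on the infinitesimal side to guarantee the integrated $N$ dualizes correctly.
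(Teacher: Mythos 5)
Your proposal is correct and follows essentially the same route as the paper: integrate the Lie bialgebroid $(A,A^*)$ to a multiplicative Poisson structure via \cite{mac-xu2} and the infinitesimal multiplicative Nijenhuis tensor $N_A$ to a multiplicative $N$ via Theorem \ref{mul-inf-mul}, then recover the two compatibility conditions $N\circ\pi^\sharp=\pi^\sharp\circ N^*$ and $C(\pi,N)\equiv 0$ from their infinitesimal counterparts by using that $T^*G\rightrightarrows A^*$ (and $T^*G\oplus_G T^*G\rightrightarrows A^*\oplus_M A^*$) are $s$-connected and $s$-simply connected, so that multiplicative morphisms are determined by their Lie functor images. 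Your additional remarks on mutual inverses and on condition (ii) of Definition \ref{inf-mul-pn-lie-bialgbd} are consistent with, and slightly more explicit than, what the paper records.
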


\begin{proof}
 Let $(A, A^*, N_A)$ be a P-N Lie bialgebroid structure on $A$. Since $(A, A^*)$ is a Lie bialgebroid, there is a Poisson structure
$\pi$ on $G$ which makes $(G \rightrightarrows M, \pi)$ into a Poisson groupoid with Lie bialgebroid $(A, A^*)$. Moreover, $N_A :  TA \rightarrow TA$ is an
infinitesimal multiplicative Nijenhuis tensor on $A$. Therefore, by Theorem \ref{mul-inf-mul}, there is a multiplicative Nijenhuis tensor
$N : TG \rightarrow TG$ such that
$$ N_A = (\sigma^{-1} \circ TN \circ \sigma)\big|_A.$$
It remains to show that $\pi$ and $N$ are compatible in the sense of Poisson-Nijenhuis structure. 
We have 
$$N_A \circ \pi_A^\sharp = j_G^{-1} \circ Lie(N) \circ j_G \circ j_G^{-1} \circ Lie (\pi^\sharp) \circ j_G^{' -1} = j_G^{-1} \circ Lie (N \circ \pi^\sharp) \circ j_G^{' -1} $$
and
$$ \pi_A^\sharp \circ N_A^* = j_G^{-1} \circ Lie(\pi^\sharp) \circ j_G^{' -1} \circ j_G^{'} \circ Lie (N^*) \circ j_G^{' -1} =  j_G^{-1} \circ Lie (\pi^\sharp \circ N^*) \circ j_G^{' -1}.$$
Since $N_A \circ \pi_A^\sharp = \pi_A^\sharp \circ N_A^*$ and $j_G, j_G^{'}$ are isomorphisms, we have $Lie (N \circ \pi^\sharp) = Lie (\pi^\sharp \circ N^*).$
As the Lie groupoid $T^*G \rightrightarrows A^*$ is  $s$-connected, $s$-simply connected and both
$N \circ \pi^\sharp$, $\pi^\sharp \circ N^* : T^*G \rightarrow TG$ are Lie groupoid morphisms from $T^*G \rightrightarrows A^*$ to
$TG \rightrightarrows TM$, it follows that $N \circ \pi^\sharp = \pi^\sharp \circ N^*$.
%
%
%is a Lie algebroid morphism from the cotangent Lie algebroid of $A$ to the tangent Lie algebroid of $A$.
%\[
%\xymatrixrowsep{0.4in}
%\xymatrixcolsep{0.6in}
%\xymatrix{
%T^*A \ar[r]^{N^*_A} \ar[d] & T^*A \ar[d] \ar[r]^{\pi_A^\sharp} & TA \ar[d] \ar[r]^{N_A} &TA \ar[d]  \\
%A^*  \ar[r]_{\phi_M} & A^* \ar[r]_{\rho_*} & TM \ar[r] & TM
%}
%\]
%Note that the cotangent Lie groupoid  and the tangent Lie groupoid $TG \rightrightarrows TM$
%are both Lie groupoids integrating the cotangent Lie algebroid of $A$ and the tangent Lie algebroids of $A$, respectively.
%We have
Therefore,
$N \pi$ defines a bivector field on $G$. Finally, the same argument as in Proposition \ref{pn-grpd-pn-lie-bialgbd} shows that 
$ 0 = C(\pi_A, N_A) = j_G^{'} \circ Lie (C(\pi, N)) \circ (j_G^{'} \oplus j_G^{'})^{-1}$. Since 
$T^*G \oplus_G T^*G \rightrightarrows A^* \oplus_M A^*$ is $s$-connected and $s$-simply connected Lie groupoid, we have $C(\pi, N) = 0$.
\end{proof}

\section{Coisotropic-invariant subgroupoids}\label{sec8}
 In this section, we introduce a class of subgroupoids of a Poisson-Nijenhuis groupoid generalizing coisotropic subgroupoids of a Poisson grouopid \cite{xu}.

\begin{defn}
 Let $(M, \pi, N)$ be a Poisson-Nijenhuis manifold. A submanifold $S \hookrightarrow M$ is called {\sf coisotropic-invariant} if $S$ is coisotropic with respect
to $\pi$ and invariant with respect to $N$, that is,
$$ \pi^\sharp(TS)^0 \subset TS ~~~ \text{~ and ~} ~~~  N (TS) \subset TS .$$
\end{defn}

Thus, it follows that if $S \hookrightarrow M$ is a coisotropic-invariant submanifold of $(M, \pi, N)$, then for each $k \geq 0$, the submanifold $S$ is coisotropic
with respect to the Poisson tensor $\pi_k = N^k \pi$ on $M$.

\begin{exam}
 Let $(G \rightrightarrows M, \pi, N)$ be a Poisson-Nijenhuis groupoid. Then the unit space $M \hookrightarrow G$ is a coisotropic-invariant submanifold of $(G, \pi, N)$.
\end{exam}
In the following proposition, we characterize Poisson-Nijenhuis maps (P-N maps) in terms of coisotropic-invariant submanifolds of the product manifold.
The result follows from the result of Weinstein \cite{wein} which characterize Poisson maps in terms of coisotropic submanifolds of the product manifold and
our Proposition \ref{nij-map-grpah-nij-submnfld}.
\begin{prop}
 Let $(M_1, \pi_1, N_1)$ and $(M_2, \pi_2, N_2)$ be two Poisson-Nijenhuis manifolds and $\phi : M_1 \rightarrow M_2$ be a smooth map. Then $\phi$ is a P-N map
if and only if
$$Gr(\phi) = \{(x, \phi(x)) |~ x \in M_1 \}$$
is a coisotropic-invariant submanifold of the Poisson-Nijenhuis manifold $(M_1 \times M_2, \pi_1 \ominus \pi_2, N_1 \oplus N_2)$.
\end{prop}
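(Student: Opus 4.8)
The plan is to split the claimed equivalence into its Poisson part and its Nijenhuis part, and to observe that being coisotropic-invariant is, by definition, exactly the conjunction of the corresponding two conditions. Recall that $\phi$ is a P-N map precisely when (a) $\phi$ is a Poisson map, i.e. $\phi_*\pi_1 = \pi_2$, and (b) $\phi$ intertwines the Nijenhuis tensors, i.e. $N_2 \circ \phi_* = \phi_* \circ N_1$.

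First I would record that $(M_1 \times M_2, \pi_1 \ominus \pi_2, N_1 \oplus N_2)$ is genuinely a Poisson-Nijenhuis manifold, so that the phrase ``coisotropic-invariant submanifold of'' it makes sense: $\pi_1 \ominus \pi_2$ is a Poisson bivector on the product, $N_1 \oplus N_2$ is a Nijenhuis tensor (the Nijenhuis torsion of a direct sum splits as the direct sum of the torsions, since the Lie bracket on the product is the direct sum of the brackets on the factors), and both compatibility conditions, namely $(N_1 \oplus N_2) \circ (\pi_1 \ominus \pi_2)^\sharp = (\pi_1 \ominus \pi_2)^\sharp \circ (N_1 \oplus N_2)^*$ and the vanishing of the Magri-Morosi concomitant, hold componentwise because they hold on each factor. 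Consequently a coisotropic-invariant submanifold of this product is, by the definition opening this section, one that is coisotropic with respect to $\pi_1 \ominus \pi_2$ \emph{and} invariant with respect to $N_1 \oplus N_2$.

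Next I would invoke the two ingredients separately. For the Poisson half I use Weinstein's characterization of Poisson maps (as recalled in the paragraph immediately preceding the statement): $\phi$ is a Poisson map if and only if $\text{Gr}(\phi)$ is a coisotropic submanifold of $M_1 \times \overline{M_2}$, that is, of $(M_1 \times M_2, \pi_1 \ominus \pi_2)$. For the Nijenhuis half I apply Proposition \ref{nij-map-grpah-nij-submnfld} to $(M_1, N_1)$, $(M_2, N_2)$ and $\phi$, which states precisely that $N_2 \circ \phi_* = \phi_* \circ N_1$ holds if and only if $\text{Gr}(\phi)$ is an invariant submanifold of $M_1 \times M_2$ with respect to $N_1 \oplus N_2$. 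Putting the two together: $\phi$ is a P-N map $\iff$ $\text{Gr}(\phi)$ is simultaneously coisotropic with respect to $\pi_1 \ominus \pi_2$ and invariant with respect to $N_1 \oplus N_2$ $\iff$ $\text{Gr}(\phi)$ is coisotropic-invariant in $(M_1 \times M_2, \pi_1 \ominus \pi_2, N_1 \oplus N_2)$.

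There is essentially no serious obstacle here: the proof is a clean amalgamation of a classical result of Weinstein with Proposition \ref{nij-map-grpah-nij-submnfld}, and the two defining conditions decouple into the Poisson factor and the Nijenhuis factor with no interaction. The only point needing a line of verification is the preliminary remark that the product indeed carries a Poisson-Nijenhuis structure, and that is immediate from the componentwise nature of all the identities involved.
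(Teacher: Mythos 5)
Your proof is correct and follows exactly the route the paper takes: the paper likewise derives the statement by combining Weinstein's characterization of Poisson maps via coisotropic graphs with Proposition \ref{nij-map-grpah-nij-submnfld} for the Nijenhuis half. Your additional check that the product carries a genuine Poisson-Nijenhuis structure is a sensible (if routine) supplement that the paper leaves implicit.
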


\begin{defn}
 Let $(G \rightrightarrows M, \pi, N)$ be a Poisson-Nijenhuis groupoid. A subgroupoid $H \rightrightarrows S$ is called a {\sf coisotropic-invariant subgroupoid}
if $H$ is a coisotropic-invariant submanifold of $(G, \pi, N)$.
\end{defn}

\begin{exam}
(i) Let $(G \rightrightarrows M, \pi)$ be a Poisson groupoid considered as a Poisson-Nijenhuis groupoid with $N = id$. Then coisotropic subgroupoids of the Poisson groupoid $(G \rightrightarrows M, \pi)$ are same as coisotropic-invariant
subgroupoids of the Poisson-Nijenhuis groupoid $(G \rightrightarrows M, \pi, N = id) .$

\medskip

(ii) Let $(G \rightrightarrows M, \pi, N)$ be a Poisson-Nijenhuis groupoid. Then from Proposition \ref{pn-grpd-base-pn}, the base $M$ carries a Poisson-Nijenhuis structure
$(\pi_M, N_M)$ such that the source $s$ is a P-N map and the target $t$ is an anti P-N map. Let $S$ be a coisotropic-invariant submanifold of $(M , \pi_M, N_M)$. Then the restriction $G|_S := s^{-1}(S) \cap t^{-1}(S) \rightrightarrows S$
is a coisotropic-invariant subgroupoid of $(G \rightrightarrows M, \pi, N)$.
%\end{itemize}
\end{exam}

To study the infinitesimal object corresponding to coisotropic-invariant subgroupoids of a Poisson-Nijenhuis grouopid, we introduce coisotropic-invariant
subalgebroids of a P-N Lie bialgebroid.

\begin{defn}\label{coiso-inv-subalgbd}
 Let $(A, A^*, N_A)$ be a P-N Lie bialgebroid over $M$. Then a Lie subalgebroid $B \rightarrow S$ of $A \rightarrow M$ is called a {\sf coisotropic-invariant subalgebroid}
of $(A, A^*, N_A)$ if $B \hookrightarrow A$ is a coisotropic-invariant submanifold with respect to the Poisson-Nijenhuis structure $(\pi_A, N_A)$.
%\begin{itemize}
% \item[(i)] $B^0$ is a Lie subalgebroid of $A^*$ (thus it implies that $B$ is a coisotropic submanifold of $A$ with respect to the linear Poisson structure $\pi_A$);
% \item[(ii)] $B$ is a coisotropic-Nijenhuis submanifold of $(A, \pi_A, N_A).$
%\end{itemize}
\end{defn}

Thus, it follows from Definition \ref{coiso-inv-subalgbd} that a Lie subalgebroid $B \rightarrow S$ is a coisotropic-invariant subalgebroid of $(A, A^*, N_A)$
if and only if $B \rightarrow S$ is a coisotropic subalgebroid of the Lie bialgebroid $(A, A^*)$ and the inclusion $B \hookrightarrow A$ is moreover an invariant submanifold
with respect to the tensor $N_A$.

\begin{exam}
Let $S$ be a coisotropic-invariant submanifold of a Poisson-Nijenhuis manifold $(M, \pi, N)$.
% Let $(M, \pi, N)$ be a Poisson-Nijenhuis manifold. 
%Consider the P-N Lie bialgebroid $(TM, (T^*M)_\pi, N_*)$ as in Example \ref{exam-pn-lie-bialgbd} (ii).
Since $S$ is a coisotropic submanifold of $(M, \pi)$, it follows that $TS \rightarrow S$ is a coisotropic subalgebroid of the Lie bialgebroid $(TM, (T^*M)_\pi)$ \cite{xu}.
Moreover, $S \hookrightarrow M$ is an invariant submanifold with respect to $N$ implies that $TS \hookrightarrow TM$ is an invariant submanifold with respect
to $N_*$. This shows that $TS \rightarrow S$ is a coisotropic-invariant subalgebroid of the P-N Lie bialgebroid $(TM, (T^*M)_\pi, N_*)$. 
\end{exam}

It is known that the base of a P-N Lie bialgebroid carries a Poisson-Nijenhuis structure (cf. Proposition \ref{pn-lie-bialgbd-base-pn}). The next proposition shows that the base of a coisotropic-invariant subalgebroid is
coisotropic-invariant with respect to the induced Poisson-Nijenhuis structure.

\begin{prop}
 Let $(A, A^*, N_A)$ be a P-N Lie bialgebroid over $M$. If $B \rightarrow S$ is a coisotropic-invariant subalgebroid of $(A, A^*, N_A)$, then $S$ is a coisotropic-invariant submanifold
of $M$.
\end{prop}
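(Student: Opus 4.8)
The plan is to observe that a coisotropic-invariant submanifold of $(M, \pi_M, N_M)$ is exactly a submanifold satisfying two independent conditions — coisotropy with respect to $\pi_M$ and invariance with respect to $N_M$ — and to treat each using that, by definition, the coisotropic-invariant subalgebroid $B \to S$ is simultaneously a coisotropic subalgebroid of the Lie bialgebroid $(A, A^*)$ and an invariant subalgebroid with respect to $N_A$. The invariance of $S$ is then immediate: since $B \hookrightarrow A$ is an invariant submanifold with respect to $N_A$, the subalgebroid $B \to S$ is in particular an invariant subalgebroid in the sense of Section \ref{sec3}, and Proposition \ref{subalgbd-base-inv} applies verbatim to give $N_M(TS) \subseteq TS$ (concretely, $N_M = N_A|_0$ and $TS = (TB)|_0$, whence $N_M(TS) = (N_A(TB))|_0 \subseteq (TB)|_0 = TS$).

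For the coisotropy $\pi_M^\sharp((TS)^0) \subseteq TS$, I would work from the formula $\pi_M^\sharp = \rho_* \circ \rho^*$ of Proposition \ref{pn-lie-bialgbd-base-pn}, where $\rho : A \to TM$ and $\rho_* : A^* \to TM$ denote the anchors of $A$ and of $A^*$. Since $B \to S$ is a Lie subalgebroid, $\rho(B) \subseteq TS$, and dualizing shows that $\rho^* : T^*M \to A^*$ maps the conormal bundle $(TS)^0 \subseteq T^*M|_S$ into the annihilator $B^{\circ} \subseteq A^*|_S$ of $B$. On the other hand, the hypothesis that $B \hookrightarrow A$ is coisotropic for the linear Poisson structure $\pi_A$ amounts, via the correspondence of Xu \cite{xu}, to the statement that $B^{\circ} \to S$ is a Lie subalgebroid of the Lie algebroid $A^* \to M$; in particular its anchor, which is the restriction of $\rho_*$, satisfies $\rho_*(B^{\circ}) \subseteq TS$. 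Combining the two inclusions gives $\pi_M^\sharp((TS)^0) = \rho_*\big(\rho^*((TS)^0)\big) \subseteq \rho_*(B^{\circ}) \subseteq TS$, so $S$ is coisotropic with respect to $\pi_M$; together with the invariance above, $S$ is a coisotropic-invariant submanifold of $(M, \pi_M, N_M)$.

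The step I expect to require the most care is the identification underlying the second half of the previous paragraph: passing from ``$B$ is coisotropic in the linear Poisson manifold $(A, \pi_A)$'', a statement about the conormal $(TB)^0$ inside the cotangent Lie algebroid $(T^*A)_{\pi_A}$ and inside the double vector bundle $T^*A \to A^*$, to ``$B^{\circ}$ is a Lie subalgebroid of $A^* \to M$ over $S$''. This demands keeping track of how the conormal of $B$ embeds in the double vector bundle $T^*A$ and of the behaviour of $\pi_A^\sharp$ there; one may either unwind these identifications directly, or simply quote Xu's result \cite{xu} — which already states that the base $S$ of a coisotropic subalgebroid of a Lie bialgebroid is a coisotropic submanifold of the base Poisson manifold — and be done.
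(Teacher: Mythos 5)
Your proof is correct and follows essentially the same route as the paper: the invariance of $S$ is obtained exactly as in Proposition \ref{subalgbd-base-inv} by restricting $N_A$ and $TB$ to the zero sections, and the coisotropy of $S$ is the content of Xu's result on coisotropic subalgebroids of a Lie bialgebroid, which the paper simply cites. Your additional unwinding of the coisotropy step via $\pi_M^\sharp = \rho_* \circ \rho^*$, the inclusion $\rho^*\bigl((TS)^0\bigr) \subseteq B^{\circ}$, and the fact that $B^{\circ}$ is a Lie subalgebroid of $A^*$ over $S$ is a correct and somewhat more self-contained account of what the citation to \cite{xu} delivers.
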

\begin{proof}
 Since $B \rightarrow S$ is a coisotropic subalgebroid of the Lie bialgebroid $(A, A^*),$ it follows that $S$ is a coisotropic submanifold of $(M, \pi_M)$ \cite{xu}. 
Moreover, $B \rightarrow S$ is a Lie subalgebroid of $A \rightarrow M$ implies that $TB \rightarrow TS$ is a Lie subalgebroid of $TA \rightarrow TM$.
Since $B \hookrightarrow A$ is an invariant submanifold with respect to $N_A$, it follows that $S \hookrightarrow M$ is an invariant submanifold with respect to the tensor
$N_M$. Thus, $S$ is a coisotropic-invariant submanifold
of $(M, \pi_M, N_M).$
%the tensor
%$N_A : TA \rightarrow TA$ is a Lie algebroid morphism and tangent to the submanifold $B$, it follows that $N_M$ is tangent to the submanifold $S$. 
\end{proof}

\begin{prop}
 Let $(G \rightrightarrows M, \pi, N)$ be a Poisson-Nijenhuis groupoid with P-N Lie bialgebroid $(A, A^*, N_A)$. Suppose that $H \rightrightarrows S$ is a coisotropic-invariant
subgroupoid of $G \rightrightarrows M$ with Lie algebroid $B \rightarrow S$, then $B \rightarrow S$ is a coisotropic-invariant subalgebroid $(A, A^*, N_A)$.
\end{prop}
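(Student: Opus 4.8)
The plan is to combine the corresponding result for Poisson groupoids (coisotropic subgroupoids integrate coisotropic subalgebroids, due to Xu~\cite{xu}) with the analogous statement for Nijenhuis groupoids established earlier (Proposition~\ref{subgrpd-subalgbd}). Concretely, the hypothesis that $H \rightrightarrows S$ is a coisotropic-invariant subgroupoid means by definition that $H \hookrightarrow G$ is simultaneously coisotropic with respect to $\pi$ and invariant with respect to $N$. We wish to conclude that the Lie algebroid $B \rightarrow S$ is simultaneously a coisotropic subalgebroid of the Lie bialgebroid $(A, A^*)$ and an invariant submanifold of $A$ with respect to $N_A$; these are exactly the two conditions packaged in Definition~\ref{coiso-inv-subalgbd}.

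First I would handle the Poisson half. Since $(G \rightrightarrows M, \pi)$ is a Poisson groupoid with Lie bialgebroid $(A, A^*)$ and $H \rightrightarrows S$ is a coisotropic subgroupoid, Xu's theorem~\cite{xu} gives directly that $B \rightarrow S$ is a coisotropic subalgebroid of $(A, A^*)$, i.e.\ $\pi_A^\sharp (TB)^0 \subset TB$. Next I would handle the Nijenhuis half: since $H \hookrightarrow G$ is an invariant submanifold with respect to $N$, Proposition~\ref{subgrpd-subalgbd} applies verbatim (the proof there uses only that $H$ is a Lie subgroupoid invariant under $N$, together with the fact that $\mathbb{T}N = \sigma^{-1}\circ TN \circ \sigma$ is tangent to $A \hookrightarrow TG$ and that $N_A = (\mathbb{T}N)_A$), yielding that $B \hookrightarrow A$ is an invariant submanifold with respect to $N_A$.

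Combining these two facts and appealing to the characterization of coisotropic-invariant subalgebroids recorded just after Definition~\ref{coiso-inv-subalgbd} — namely that $B \rightarrow S$ is coisotropic-invariant precisely when it is a coisotropic subalgebroid of $(A,A^*)$ and $B \hookrightarrow A$ is invariant under $N_A$ — we conclude that $B \rightarrow S$ is a coisotropic-invariant subalgebroid of $(A, A^*, N_A)$.

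The only mild subtlety, and the step I would be most careful about, is checking that the restriction process producing $B$ as the Lie algebroid of $H$ is compatible with both the Poisson and the Nijenhuis structures at once — but this is not really an obstacle: the identity $B = A|_S \cap TH|_S$ used in Proposition~\ref{subgrpd-subalgbd} is the same identification of $B$ inside $A$ that underlies Xu's coisotropic statement, so the two ``halves'' refer to the same submanifold $B \hookrightarrow A$ and the conclusions genuinely combine. Hence the proof is essentially a citation of Xu's result together with an invocation of Proposition~\ref{subgrpd-subalgbd}.
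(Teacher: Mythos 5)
Your proposal is correct and follows essentially the same route as the paper: both cite Xu's result for the coisotropic half, and for the Nijenhuis half the paper simply inlines the argument of Proposition~\ref{subgrpd-subalgbd} (via $B_q = A_q \cap T_qH$ and the tangency of $N_A$ and $\mathbb{T}N$) rather than citing it as you do. Your version is, if anything, slightly cleaner for making the citation explicit.
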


\begin{proof}
 Since $H \rightrightarrows S$ is a coisotropic subgroupoid of the Poisson groupoid $(G \rightrightarrows M, \pi),$ its Lie algebroid $B \rightarrow S$
is a Lie subalgebroid of the Lie bialgebroid $(A, A^*)$ \cite{xu}. Moreover, we have
$$ B_q = A_q \cap T_qH, ~~ \text{ for } q \in S.$$
Therefore, it follows that $TB = TA \cap T(TH)$, which implies
$$ N_A (TB) \subseteq ~ N_A (TA) \cap \text{Lie}(N) (T(TH)) \subseteq ~  TA \cap T(TH) = TB .$$
Thus, $B \hookrightarrow A$ is an invariant submanifold with respect to the tensor $N_A$.
\end{proof}

\begin{corollary}
 Let $(G \rightrightarrows M, \pi, N)$ be a Poisson-Nijenhuis groupoid with induced Poisson-Nijenhuis structure $(\pi_M, N_M)$ on $M$. If $H \rightrightarrows S$ is a coisotropic-invariant subgroupoid, then $S \hookrightarrow M$ is a coisotropic-invariant
submanifold with respect to $(\pi_M, N_M)$.
\end{corollary}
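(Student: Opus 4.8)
The plan is to obtain this statement purely by chaining together the two propositions that immediately precede it. First I would attach to the Lie algebroid $A$ of $G \rightrightarrows M$ its canonical P-N Lie bialgebroid structure $(A, A^*, N_A)$, which exists by Proposition \ref{pn-grpd-pn-lie-bialgbd}. Given the coisotropic-invariant subgroupoid $H \rightrightarrows S$ with Lie algebroid $B \rightarrow S$, the proposition immediately above then tells us that $B \rightarrow S$ is a coisotropic-invariant subalgebroid of $(A, A^*, N_A)$.

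Next I would feed this into the proposition stating that the base of a coisotropic-invariant subalgebroid is itself a coisotropic-invariant submanifold. This yields that $S \hookrightarrow M$ is coisotropic-invariant with respect to the Poisson-Nijenhuis structure on $M$ induced from the P-N Lie bialgebroid $(A, A^*, N_A)$ via Proposition \ref{pn-lie-bialgbd-base-pn}.

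The single point requiring a word of justification is that this induced structure agrees with the structure $(\pi_M, N_M)$ coming directly from the Poisson-Nijenhuis groupoid by Proposition \ref{pn-grpd-base-pn}. For the Poisson part this is the Mackenzie--Xu fact that the base Poisson structure of a Poisson groupoid coincides with the base Poisson structure $\rho_* \circ \rho^*$ of its Lie bialgebroid, so both recipes produce the same $\pi_M$. For the Nijenhuis part, $N_M$ is by definition the restriction of $N : TG \rightarrow TG$ to the unit space $M \hookrightarrow G$, and this is exactly the restriction of $N_A = (\sigma^{-1} \circ TN \circ \sigma)|_A$ to the zero section of $A$, so the two candidate tensors on $TM$ coincide. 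With this identification in hand, the conclusion just cited is precisely the assertion that $S$ is coisotropic-invariant for $(\pi_M, N_M)$. I do not anticipate any genuine obstacle here: all the substantive work has already been carried out in the two preceding propositions, and the corollary is essentially the bookkeeping statement that combines them together with the coincidence of the two descriptions of the induced structure on $M$.
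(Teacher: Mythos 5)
Your proposal is correct and is exactly the route the paper intends: the corollary is stated without proof as the immediate concatenation of the two preceding propositions, and your extra paragraph identifying the structure $(\pi_M,N_M)$ induced via the P-N Lie bialgebroid with the one induced directly from the groupoid (Mackenzie--Xu for the Poisson part, restriction to the unit space/zero section for the Nijenhuis part) is the right and only bookkeeping point to check.
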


\section{Poisson-Nijenhuis actions}\label{sec9}
In this section, we define Poisson-Nijenhuis action (or P-N action in short) as a generalization of Poisson action introduced by Liu, Weinstein and Xu \cite{liu-wein-xu}.

\begin{defn}\label{defn-pn-action}
 Let $(G \rightrightarrows M, \pi, N)$ be a Poisson-Nijenhuis groupoid. A {\sf P-N action} of $G$ on a Poisson-Nijenhuis manifold $(X, \pi_X, N_X)$ is a Lie groupoid
action of $G$ on $X$ with moment map $J : X \rightarrow M$ such that
\begin{itemize}
 \item[(i)] the action is Poisson \cite{liu-wein-xu}, that is, the graph 
$$ \Omega = \{(g, x, g x) |~ t (g) = J(x)\} \subset G \times X \times \overline{X}$$
of the action is a coisotropic submanifold,
 \item[(ii)] the graph $\Omega$ is an invariant submanifold of $G \times X \times X$ with respect to the tensor $N \oplus N_X \oplus N_X.$
\end{itemize}
\end{defn}

Thus, a P-N action of a Poisson-Nijenhuis groupoid $(G \rightrightarrows M, \pi, N)$ on a Poisson-Nijenhuis manifold $(X, \pi_X, N_X)$ is a groupoid action with moment map
$J : X \rightarrow M$ such that the graph of the action $\Omega$ is a coisotropic-invariant submanifold of the Poisson-Nijenhuis manifold
$(G \times G \times G, \pi \oplus \pi_X \ominus \pi_X, N \oplus N_X \oplus N_X)$.

If $\pi$ and $\pi_X$ are non-degenerate Poisson structures (that is, symplectic structures) on $G$ and $X$, respectively, then the P-N action is called symplectic-Nijenhuis action.

\begin{exam}
 Let $G \rightrightarrows M$ be a Lie groupoid. Then there is a Lie groupoid action of $G$ on itself with the source $s : G \rightarrow M$
as the moment map and the groupoid multiplication as action map. In particaular, if $(G \rightrightarrows M, \pi, N)$ is a Poisson-Nijenhuis groupoid, the
action of $G$ on itself is a P-N action.
\end{exam}

Let $(G \rightrightarrows M, \pi, N)$ be a Poisson-Nijenhuis groupoid with a P-N action on Poisson-Nijenhuis manifold $(X, \pi_X, N_X)$ with moment map
$J: X \rightarrow M$. Then for each $k \geq 0$ and for any $(\alpha_g, \beta_x, \gamma_{gx}) \in (T_{(g,x,gx)} \Omega)^0$, we have
$$ (N^k \pi \oplus N^k_X \pi_X \oplus \overline{N^k_X \pi_X})(\alpha_g, \beta_x, \gamma_{gx}) = (N \oplus N_X \oplus N_X)^k \big( \pi^\sharp (\alpha_g), \pi_X^\sharp (\beta_x), - \pi_X^\sharp(\gamma_{gx})   \big)$$
is in $T_{(g,x,gx)} \Omega$. Thus, for each $k \geq 0,$ the action is a Poisson action of the Poisson groupoid
$(G \rightrightarrows M, N^k \pi)$ on the Poisson manifold $(X, N^k_X \pi_X)$.

\begin{prop}
 Let $(G \rightrightarrows M, \pi, N)$ be a Poisson-Nijenhuis groupoid. If $G$ has a P-N action on a Poisson-Nijenhuis manifold $(X, \pi_X, N_X)$ wih moment map
$J : X \rightarrow M$, then $J$ is a P-N map.
\end{prop}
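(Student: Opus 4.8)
The plan is to verify the two defining properties of a P-N map for $J\colon (X,\pi_X,N_X)\to (M,\pi_M,N_M)$, where $(\pi_M,N_M)$ is the Poisson-Nijenhuis structure on the base $M$ furnished by Proposition \ref{pn-grpd-base-pn}(ii): namely that $J$ is a Poisson map, $J_*\pi_X=\pi_M$, and that $J$ intertwines the Nijenhuis tensors, $N_M\circ J_*=J_*\circ N_X$. The first property is handled by quoting the classical fact: condition (i) of Definition \ref{defn-pn-action} says the action is a Poisson action, and by \cite{liu-wein-xu} the moment map of a Poisson action is a Poisson map, so $J_*\pi_X=\pi_M$. (Applying the same to the Poisson groupoid $(G\rightrightarrows M, N^k\pi)$ acting on $(X, N_X^k\pi_X)$, as observed just before the statement, even gives $J_*(N_X^k\pi_X)=N_M^k\pi_M$ for all $k\ge 0$.)

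For the Nijenhuis property I would use condition (ii): the graph $\Omega=\{(g,x,gx)\mid t(g)=J(x)\}$ is an invariant submanifold of $G\times X\times X$ with respect to $N\oplus N_X\oplus N_X$. Since $\Omega$ is the image of the fibre product $G\times_M X$ under the embedding $(g,x)\mapsto(g,x,gx)$, a tangent vector to $\Omega$ at a point $(g,x,gx)$ has the form $(X_g,Y_x,X_g\bullet Y_x)$ with $X_g\in T_gG$, $Y_x\in T_xX$ satisfying $t_*(X_g)=J_*(Y_x)$, where $X_g\bullet Y_x\in T_{gx}X$ denotes the tangent lift of the action. Invariance of $\Omega$ forces $(N(X_g),N_X(Y_x),N_X(X_g\bullet Y_x))$ to again be tangent to $\Omega$ at $(g,x,gx)$; in particular the first two components must be composable, i.e. $t_*(N(X_g))=J_*(N_X(Y_x))$. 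Since $N$ is multiplicative, $t_*\circ N=N_M\circ t_*$, so the left-hand side equals $N_M(t_*(X_g))=N_M(J_*(Y_x))$, and we obtain $N_M(J_*(Y_x))=J_*(N_X(Y_x))$.

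To upgrade this to the identity $N_M\circ J_*=J_*\circ N_X$ on all of $TX$, I note that every $x\in X$ and every $Y_x\in T_xX$ occur in such a configuration: take $g=\epsilon(J(x))$, so that $t(g)=J(x)$ and $gx=x$, and choose $X_g\in T_gG$ with $t_*(X_g)=J_*(Y_x)$, which exists because $t\colon G\to M$ is a surjective submersion. Hence $J_*\circ N_X=N_M\circ J_*$, and combined with the Poisson statement, $J$ is a P-N map.

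The argument is short; its only genuinely non-formal ingredient is the Poisson statement, which is imported from \cite{liu-wein-xu}, and the single point requiring a little care is the surjectivity step in the last paragraph, which guarantees that the $X$-component of tangent vectors to $\Omega$ exhausts $TX$. (As a side remark, the third component of the invariance condition yields $N_X(X_g\bullet Y_x)=N(X_g)\bullet N_X(Y_x)$, i.e. the action map is itself compatible with the Nijenhuis tensors, though this is not needed here.)
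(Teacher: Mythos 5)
Your proposal is correct and follows essentially the same route as the paper: the Poisson part is quoted from Liu--Weinstein--Xu, and the Nijenhuis part is extracted from the invariance of the graph $\Omega$ at points over the units, using the multiplicativity of $N$ (the paper applies invariance to the specific tangent vector $(\epsilon_*J_*\delta_x,\delta_x,\delta_x)$ at $(\epsilon(J(x)),x,x)$, while you phrase it for a general composable pair and then observe that the unit choice realizes every $Y_x\in TX$, which is the same surjectivity point). No gaps.
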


\begin{proof}
 Since the action is Poisson, it follows that the moment map $J : X \rightarrow M$ is a Poisson map \cite{liu-wein-xu}. Thus, it remains to show that $J$
is a Nijenhuis map.

Take $x \in X$ and $\delta_x \in T_xX$. Consider the vector $(\epsilon_* {J_* \delta_x}, \delta_x , \delta_x)$ tangent to $\Omega$ at the point $(\epsilon{(J(x))}, x, x) \in \Omega$.
Since $\Omega$ is an invariant submanifold of $G \times X \times X$ with respect to the tensor $N \oplus N_X \oplus N_X$, we have
$$ N( \epsilon_* {(J_* \delta_x)}) = \epsilon_* {(J_* N_X(\delta_x))}.$$
It follows that, $J_* N_X(\delta_x) = N_M (J_* \delta_x)$, proving that $J$ is a Nijenhuis map.
\end{proof}

\begin{remark}
 In \cite{he-liu-zhong}, the authors characterize Poisson actions of Poisson groupoid $G$ on Poisson manifold $X$ by Lie bialgebroid morphisms from
$T^*X$ to $A^*$, where $A$ is the Lie algebroid of the groupoid $G$. Thus, it is interesting to study P-N Lie bialgebroid morphism and characterize P-N actions 
in terms of P-N Lie bialgebroid morphisms.
\end{remark}

%Let $(G \rightrightarrows M, \pi)$ be a Poisson groupoid with Lie bialgebroid $(A, A^*)$ and 

%\mbox{ }\\
%
%\providecommand{\bysame}{\leavevmode\hbox to3em{\hrulefill}\thinspace}
%\providecommand{\MR}{\relax\ifhmode\unskip\space\fi MR }
% \MRhref is called by the amsart/book/proc definition of \MR.
%\providecommand{\MRhref}[2]{%
 % \href{http://www.ams.org/mathscinet-getitem?mr=#1}{#2}
%}
%\providecommand{\href}[2]{#2}

\end{document}